\newtheorem{thm}{Theorem}[section]
\newtheorem{cor}[thm]{Corollary}
\newtheorem{lema}[thm]{Lemma}
\newtheorem{prop}[thm]{Proposition}
\theoremstyle{definition}
\newtheorem{defn}[thm]{Definition}
\theoremstyle{remark}
\newtheorem{rem}[thm]{Remark}
\def\R{\mathbb{R} }
\newcommand{\N}{\mathbb N}
\def\to{\rightarrow}
\def\supp{\mathop{\text{\normalfont supp}}}
\begin{document}

\title{Peridynamics and Anisotropic Fractional Sobolev Spaces with Variable Exponents}

\author{Sabri Bahrouni}
\address[S. Bahrouni]{ Mathematics Department, Faculty of Sciences, University of Monastir, 5019 Monastir, Tunisia\\
The author is supported by FAPESP Proc 2023/04515-7}
\email{sabri.bahrouni@fsm.rnu.tn}

 \author[J. Fern\'{a}ndez Bonder]{Juli\'{a}n Fern\'{a}ndez Bonder}
\address[J. Fern\'{a}ndez Bonder]{Instituto de C\'alculo, CONICET,Departamento de Matem\'atica, FCEN - Universidad de Buenos Aires, Ciudad Universitaria, 0+$\infty$ building, C1428EGA, Av. Cantilo s/n, Buenos Aires, Argentina}
\email{jfbonder@dm.uba.ar}

\author[I. Ceresa Dussel]{Ignacio Ceresa Dussel}
\address[I. Ceresa Dussel]{Instituto de C\'alculo, CONICET,Departamento de Matem\'atica, FCEN - Universidad de Buenos Aires, Ciudad Universitaria, 0+$\infty$ building, C1428EGA, Av. Cantilo s/n, Buenos Aires, Argentina}
\email{iceresad@dm.uba.ar}

\author[O. Miyagaki]{Olimpio Miyagaki}
\address[O. Miyagaki]{Departamento de Matem\'{a}tica, Universidade Federal de S\~ao Carlos S\~ao Carlos, SP, CEP:13565-905,  Brazil\\
The author is supported by CNPq Proc 303256/2022-2 and FAPESP Proc 2022/16407-1}
\email{ olimpio@ufscar.br.}

\begin{abstract}
In this paper, our primary objective is to develop the peridynamic fractional Sobolev space and establish novel BBM-type results associated with it. We also address the peridynamic fractional anisotropic $p-$Laplacian. A secondary objective is to explore anisotropic fractional Sobolev spaces with variable exponents, where we also derive new BBM-type results. Additionally, we address the eigenvalue problem in the isotropic case.
\end{abstract}

\keywords{Anisotropic Sobolev spaces; Variable exponent Lebesgue spaces; peridynamic problem; Bourgain-Brezis-Mironescu (BBM) type results;  Eigenvalue problems}
\subjclass[2020]{35R09, 46E36, 47G20, 45P05}

\maketitle

\section{Introduction}

Given a measurable function $u$ on $\mathbb{R}^N$, and an index vector $\overrightarrow{p}=\left(p_1, p_2, \ldots, p_N\right)$, where $1\leq p_j \leq \infty$ for each $j$, we can calculate the numbers $\|u\|_{\overrightarrow{p}}$ by first calculating the $L^{p_1}$-norm of $u\left(x_1, \ldots, x_N\right)$ with respect to $x_1$, and then the $L^{p_2}$-norm of the result with respect to $x_2$, and so on finishing with the $L^{p_N}$-norm with respect to $x_N$:
$$
\|u\|_{\overrightarrow{p}}=\left\|\cdots\left\|\|u\|_{L^{p_1}\left(dx_1\right)}\right\|_{L^{p_2}\left(d x_2\right)} \cdots\right\|_{L^{p_N}\left(d x_N\right)}
$$
where
$$
\|f\|_{L^p(d t)}=\left\{\begin{array}{l}
{\left(\displaystyle\int_{-\infty}^{\infty}|f(\ldots, t, \ldots)|^p d t\right)^{1 / p} \text { if } 1<p<\infty} \\
\underset{t}{\operatorname{ess} \sup }|f(\ldots, t, \ldots)| \text { if } p=\infty .
\end{array}\right.
$$

If all the numbers $p_j$ are finite, then
$$
\|u\|_{\overrightarrow{p}}=\left(\int_{-\infty}^{\infty} \ldots\left(\int_{-\infty}^{\infty}\left(\int_{-\infty}^{\infty}\left|u\left(x_1, \ldots, x_n\right)\right|^{p_1} \mathrm{~d} x_1\right)^{p_2 / p_1} \mathrm{~d} x_2\right)^{p_3 / p_2} \ldots \mathrm{d} x_N\right)^{1 / p_N} .
$$

We will denote by $L^{\overrightarrow{p}}=L^{\overrightarrow{p}}\left(\mathbb{R}^N\right)$ the Banach space of set of functions $u$ for which $\|u\|_{\overrightarrow{p}}<\infty,$ where equivalence classes of almost everywhere equal. We refer to Benedek and Panzone \cite{Benedek-Panzone} for general information on spaces $L^{\overrightarrow{p}}$.

We define the anisotropic Sobolev space as the class of function $u$ in $L^{\overrightarrow{p}}$ whose partial derivatives $\partial_{i}u$  are elements of $L^{p_i},$ see \cite{Adams} and references therein for more details.

This type of anisotropy led to two possible generalizations: one for Sobolev spaces with a variable exponent and the other for fractional Sobolev spaces.

For Sobolev spaces with a variable exponent, variational problems have been extensively studied, with a particular focus on the isotropic variable exponent Sobolev spaces, as detailed in \cite{DHHR, Radulescu-Repovs1, Fan-Zhao, Kova}. In \cite{Fan1}, Fan carried out a systematic investigation of anisotropic variable exponent Sobolev spaces, providing an important theoretical framework for studying anisotropic and parabolic equations involving the operator $-\Delta_{\overrightarrow{p}(x)}u:=-\sum_{i=1}^{N}\partial_i\left(\left|\partial_i u\right|^{p_i(x)-2}\partial_i u\right),$ (see \cite{Antontsev-Shmarev1, Antontsev-Shmarev2, MPR}).

While variable exponent Sobolev spaces extend the classical theory by allowing the integrability order to vary, the fractional framework further generalizes this concept by using non-integer orders of differentiability, leading to a richer and more nuanced analysis of function spaces. Fractional order Sobolev spaces have gained significant attention recently, particularly following the influential papers \cite{NPV} and \cite{Caffarelli-Silvestre}. In the seminal paper \cite{BBM1}, Bourgain, Brezis and Mironescu proved that for any smooth bounded domain $\Omega \subset \mathbb{R}^N, u \in W^{1, p}(\Omega), 1 \leq p<\infty$, the well-known fractional Gagliardo seminorm recovers its local counterpart as $s$ goes to 1, in the sense that
$$
\lim _{s \uparrow 1}(1-s) \iint_{\Omega \times \Omega} \frac{|u(x)-u(y)|^p}{|x-y|^{N+s p}} d x d y=K(N, p) \int_{\Omega}|\nabla u|^p d x,$$
 where the constant $K(N, p)$ is defined as
$$
K(N, p)=\frac{1}{p} \int_{\mathbb{S}^{N-1}}|\boldsymbol{\omega} \cdot h|^p d \mathcal{H}^{N-1}(h).
$$
Here $\mathbb{S}^{N-1} \subset \mathbb{R}^N$ denotes the unit sphere, $\mathcal{H}^{N-1}$ is the ($N-1)$-dimensional Hausdorff measure and $\omega$ is an arbitrary unit vector of $\mathbb{R}^N$. See \cite{BBM2} for a survey on this and related results.

In \cite{CKW1}, the authors studied the fractional anisotropic Sobolev space, where they considered different fractional regularity and integrability in each coordinate direction. They introduced $N $ different fractional parameters $s_1, \cdots, s_N$ and $N$ different integrability parameters $p_1, \cdots , p_N$ , defining a space of functions $u(x) = u(x_1, \cdots , x_N) $ such that
$$
\int_{\R^N}\int_{\R}\frac{|u(x+h e_i)-u(x)|^{p_i}}{|h|^{1+s_i p_i}}\, dhdx<\infty\quad\text{for}\quad i=1,\cdots, N.
$$
A Bourgain-Brezis-Mironescu (BBM) type result for an energy functional strongly related to the fractional anisotropic $p-$Laplacian has been recently proven in \cite{Ceresa-Bonder1}.\\

One of the primary goals of this paper is, based on \cite{Bellido-Mora, Bellido-Ortega}, to study the peridynamic problem associated with the fractional anisotropic $p$-Laplacian. Peridynamics is distinguished by its nonlocality, which allows points separated by a positive distance to exert forces on each other. This feature sets peridynamics apart from classical theories that rely on gradients. As a result, peridynamics is particularly effective for addressing problems involving discontinuities, such as fractures, dislocations, or multi-scale materials. Hence, for a given real number $\delta>0$, $0<s<1$, and $1<p<\infty$, we will analyze the behavior when $\delta\to 0$ of the functional of the form
\begin{equation}
	[u]_{s,p,\delta}^i=\int_{\R^N}\int_{|h|\leq \delta}\frac{|u(x+he_i)-u(x)|^p}{|h|^{1+sp}}\,dh\,dx,
\end{equation}
 and the associated nonlocal operator, namely
$$
(-\Delta_p)_{\delta}^s u(x)= p.v.\int_{|h|\leq \delta}\frac{|u(x+he_i)-u(x)|^{p-2}(u(x+he_i)-u(x))}{|h|^{1+sp}}\,dh.
$$
It can be seen as a \emph{ peridynamic fractional anisotropic p-laplacian} because $(-\Delta_p)_{\delta}^s $ represents a truncation of the fractional anisotropic p-laplacian analyzed in the paper \cite{Ceresa-Bonder1}.\\

Recently, a novel category of fractional Sobolev spaces with variable exponents, denoted as $W^{s,\bar{p},p(.,.)}(\Omega)$, was introduced by U. Kaufmann {\it et al.} \cite{Kaufmann}. Initially proposed in their work, this framework has subsequently undergone refinement in \cite{Ho-Kim}, where it is precisely defined as
$$
W^{s,\bar{p},p(.,.)}(\Omega)=\bigg{\{}u\in L^{\bar{p}}(\Omega):\ \int_{\Omega}\int_{\Omega}\frac{|u(x)-u(y)|^{p(x,y)}}{|x-y|^{N+sp(x,y)}}dxdy<\infty\bigg{\}},
$$
where $\Omega$  is an open subset of $\mathbb{R}^N$ $(N\geq2)$, $s\in(0,1)$, $\bar{p}(x)=p(x,x)$ and $p$ satisfying
\begin{equation}\tag{P}\label{P}
  \begin{aligned}
  &p\in C\left(\mathbb{R}^N\times\mathbb{R}^N,(1+\infty)\right),\\
  &p(x,y)=p(y,x)\ \text{for all}\ x,y\in\mathbb{R}^N.
\end{aligned}
\end{equation}

A second goal of this paper is to explore the anisotropic properties of fractional Sobolev spaces with variable exponents, which will be discussed in Section \ref{Anisotropic fractional Variable Exponents}.

\begin{defn}
Given $i=1,\cdots, N,$ $p_0\in C\left(\mathbb{R}^N,(1+\infty)\right),$ $\Omega$ an open subset of $\R^N,$ and $p_i$ satisfying \eqref{P}.
Let $\vec{s}=(s_1,\cdots,s_N),$ and $\vec{p}(\cdot,\cdot)=(p_1(\cdot,\cdot),\cdots,p_N(\cdot,\cdot)).$ We introduce the {\bf{\it anisotropic fractional Sobolev spaces with variable exponent}} $W^{\vec{s},\vec{p}(\cdot,\cdot)}_{p_0}(\Omega)$ as
$$
W^{\vec{s},\vec{p}(\cdot,\cdot)}_{p_0}(\R^N)=\left\{u\in L^{p_0}(\R^N):\quad\sum_{i=1}^{N}J_{s_i,p_i}(u)<\infty\right\},
$$
equipped with the norm
$$
\|u\|_{\vec{s},p_0,\vec{p}}=\|u\|_{p_0}+\sum_{i=1}^{N}[u]_{s_i,p_i},
$$
where
$$
J_{s_i,p_i}(u)=\int_{\R^N}\int_{\R}\frac{|u(x+h e_i)-u(x)|^{p_i(x,x+h e_i)}}{|h|^{1+s_i p_i(x,x+h e_i)}}\, dhdx,
$$
 and
$$
[u]_{s_i,p_i}=\inf\left\{\lambda>0:\quad J_{s_i,p_i}\left(\frac{u}{\lambda}\right))\leq1\right\}.
$$
\end{defn}

The rest of the paper is organized as follows: Section 2 is a preliminary section to establish the notation. In Section 3, we address the Peridynamics case. In Section 4, we cover the anisotropic variable case. Note that in the last section, we sometimes do not specify the exponent as variable in the notations if it is clear from the context.

\section{Preliminaries}

In this section, we briefly review the definitions and list some basic properties of the
Lebesgue spaces with variable exponent and the fractional Sobolev spaces with variable exponent.

\subsection{Variable exponent Lebesgue spaces}

Let $\Omega$ be a Lipschitz domain in $\mathbb{R}^N$ and $r\in C_+(\overline{\Omega})$. We define the variable exponent Lebesgue space
$$
L^{r(\cdot)}(\Omega):=\bigg{\{}u:\Omega\rightarrow\mathbb{R},\ \text{measurable}\ \text{and}\ \int_{\Omega}|u(x)|^{r(x)}dx<\infty\bigg{\}},
$$
endowed with the Luxemburg norm
$$
\|u\|_{r,\Omega}:=\inf\bigg{\{}\lambda>0:\  \int_{\Omega}\left|\frac{u(x)}{\lambda}\right|^{r(x)}\frac{dx}{r(x)}\leq1 \bigg{\}}.
$$
In the Luxemburg norm definition, we used $\frac{dx}{r(x)},$ recognizing that this choice is inconsequential; it remains equivalent to using $dx$ alone and streamlines the equations slightly. When there is no confusion we shall often write $\|\cdot\|_{r}$ instead of $\|\cdot\|_{r,\Omega}.$

Some basic properties of $L^{r(x)}(\Omega)$ are listed in the following two propositions.

\begin{prop}[\cite{Fan-Zhao, Kova}]
  The space $L^{r(\cdot)}(\Omega)$ is a separable, uniformly convex Banach space and its dual space is $L^{r^{'}(\cdot)}(\Omega)$, where $\frac{1}{r(x)}+\frac{1}{r^{'}(x)}=1$. Furthermore, for any $u\in L^{r(\cdot)}(\Omega)$ and $v\in L^{r^{'}(x)}(\Omega)$, we have $$\bigg{|}\int_{\Omega}uv\ dx\bigg{|}\leq 2\|u\|_{r,\Omega}\|v\|_{r^{'},\Omega}.$$
\end{prop}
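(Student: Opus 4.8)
The statement to be proved is the standard duality and structural result for variable exponent Lebesgue spaces: $L^{r(\cdot)}(\Omega)$ is a separable, uniformly convex Banach space with dual $L^{r'(\cdot)}(\Omega)$, and a Hölder-type inequality holds. The plan is to proceed in the classical order, since these facts are all standard consequences of the Luxemburg norm construction together with the structural hypothesis $r \in C_+(\overline{\Omega})$ (so $1 < r_- := \operatorname{ess\,inf} r \le r_+ := \operatorname{ess\,sup} r < \infty$).

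First I would verify that $\|\cdot\|_{r,\Omega}$ is a genuine norm and that $L^{r(\cdot)}(\Omega)$ is complete. Absolute homogeneity and the triangle inequality follow from the convexity of the modular $\rho(u) = \int_\Omega |u|^{r(x)}\,\tfrac{dx}{r(x)}$ and the standard Luxemburg argument; definiteness uses $r_- > 1$. For completeness one takes a Cauchy sequence, extracts a rapidly converging subsequence, and uses the elementary comparison between modular convergence and norm convergence (if $\|u_n\|_r \to 0$ then $\rho(u_n) \to 0$, and conversely, because $r$ is bounded) together with Fatou's lemma. Separability follows by exhibiting the simple functions with rational values supported on cubes with rational vertices as a dense countable set; density of simple functions is obtained by truncation and monotone/dominated convergence against the modular, again using $r_+ < \infty$.

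Next I would establish the Hölder inequality $\left| \int_\Omega uv\,dx \right| \le 2 \|u\|_{r,\Omega}\|v\|_{r',\Omega}$. After normalizing so that $\|u\|_r = \|v\|_{r'} = 1$, Young's inequality pointwise gives $|u(x)v(x)| \le \tfrac{|u(x)|^{r(x)}}{r(x)} + \tfrac{|v(x)|^{r'(x)}}{r'(x)}$, and integrating yields $\int_\Omega |uv| \le \rho_r(u) + \rho_{r'}(v) \le 1 + 1 = 2$ (here the normalization of the modular with the $\tfrac{1}{r(x)}$ weight is exactly what makes the Young term land cleanly). Rescaling gives the stated bound. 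This also shows that every $v \in L^{r'(\cdot)}$ induces a bounded functional on $L^{r(\cdot)}$; the reverse identification of the dual — that every bounded functional arises this way — I would obtain from the Radon–Nikodym theorem applied to the set function $\mu(E) = \Lambda(\chi_E)$, exactly as in the constant-exponent case, using that $\Lambda$ is absolutely continuous with respect to Lebesgue measure and that $L^\infty \subset L^{r(\cdot)}$ locally.

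Finally, uniform convexity: since $1 < r_- \le r_+ < \infty$, one reduces to a uniform Clarkson-type inequality for the modular. The cleanest route is to prove that the modular $\rho_r$ is uniformly convex on bounded sets (using the uniform convexity of $t \mapsto |t|^{r(x)}$ with the exponent bounded away from $1$ and $\infty$), then transfer this to the Luxemburg norm via the standard equivalence between norm and modular on the unit sphere ($\|u\|_r = 1 \iff \rho_r(u) = 1$, valid because the modular is continuous and strictly increasing in the scaling parameter when $r_+ < \infty$). The main obstacle — really the only nontrivial point — is this uniform convexity step: one must handle the two regimes $r(x) \ge 2$ and $1 < r(x) < 2$ separately with the appropriate Clarkson inequality and then integrate, keeping the constants uniform thanks to $r_\pm$. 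Since all of this is classical and recorded in the cited references \cite{Fan-Zhao, Kova}, I would present it compactly, citing those sources for the detailed computations and only indicating the modular-to-norm passage explicitly.
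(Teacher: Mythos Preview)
Your outline is correct and follows the standard route recorded in the cited references. Note that the paper itself gives no proof of this proposition at all: it is stated as a preliminary fact with citation to \cite{Fan-Zhao, Kova}, so there is nothing further to compare.
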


\begin{prop}[\cite{Fan-Zhao}]
  Define the modular $\rho_r:L^{r(\cdot)}(\Omega)\rightarrow\mathbb{R}$ by $$\rho_r(u):=\int_{\Omega}|u|^{r(x)}dx,\ \text{for all}\ u\in L^{r(\cdot)}(\Omega).$$
  Then, we have the following relations between norm and modular.\\
  \begin{enumerate}
    \item [(i)] $u\in L^{r(\cdot)}(\Omega)\setminus\{0\}$ if and only if $\rho_r\big{(}\frac{u}{\|u\|_{r,\Omega}}\big{)}=1$.
    \item [(ii)] $\rho_r(u)>1\ (=1,\ <1)$  if and only if $\|u\|_{r,\Omega}>1,\ (=1,\ <1)$, respectively.
    \item [(iii)] If $\|u\|_{r,\Omega}>1$, then $\|u\|_{r,\Omega}^{r_-}\leq\rho_r(u)\leq \|u\|_{r,\Omega}^{r_+}$.
    \item [(iv)]  If $\|u\|_{r,\Omega}<1$, then $\|u\|_{r,\Omega}^{r_+}\leq\rho_r(u)\leq \|u\|_{r,\Omega}^{r_-}$.
    \item [(v)] For a sequence $(u_n)\subset L^{r(\cdot)}(\Omega)$ and $u\in L^{r(x)}(\Omega)$, we have $$\lim_{n\rightarrow+\infty}\|u_n-u\|_{r,\Omega}=0\ \ \text{if and only if}\ \ \lim_{n\rightarrow+\infty}\rho_r(u_n-u)=0.$$
  \end{enumerate}
\end{prop}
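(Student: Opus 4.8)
The plan is to deduce every item from one elementary scaling estimate for the modular. Write $r_-=\min_{\overline\Omega} r$ and $r_+=\max_{\overline\Omega} r$, which satisfy $1\le r_-\le r_+<\infty$ since $r\in C_+(\overline\Omega)$. Since $|u(x)/\lambda|^{r(x)}=\lambda^{-r(x)}|u(x)|^{r(x)}$ and $\lambda\mapsto\lambda^{-r(x)}$ is monotone, one gets at once
\[
\lambda^{-r_+}\rho_r(u)\le \rho_r\Big(\tfrac{u}{\lambda}\Big)\le \lambda^{-r_-}\rho_r(u)\qquad\text{for }\lambda\ge 1,
\]
and the same chain with the roles of $r_-$ and $r_+$ interchanged for $0<\lambda\le 1$. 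I would record these two inequalities first; they carry essentially the whole proof.

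Next, for a fixed $u\in L^{r(\cdot)}(\Omega)\setminus\{0\}$ I would analyze the function $\varphi_u(\lambda):=\rho_r(u/\lambda)$ on $(0,\infty)$. From the scaling estimate, finiteness of $\varphi_u$ at one point propagates to all of $(0,\infty)$, so in particular $\varphi_u(1)=\rho_r(u)<\infty$; the same estimate furnishes, on a neighbourhood of any $\lambda_0$, a fixed integrable majorant $C(\lambda_0)\,|u|^{r(x)}$, so dominated convergence shows $\varphi_u$ is continuous. Moreover $\varphi_u$ is strictly decreasing (because $|\{|u|>0\}|>0$), $\varphi_u(\lambda)\le\lambda^{-r_-}\rho_r(u)\to 0$ as $\lambda\to\infty$, and $\varphi_u(\lambda)\to\infty$ as $\lambda\to 0^+$ by Fatou on $\{|u|>0\}$. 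Hence $\{\lambda>0:\varphi_u(\lambda)\le 1\}$ is a half-line $[\lambda^\ast,\infty)$ with $\varphi_u(\lambda^\ast)=1$; that is, $\|u\|_{r,\Omega}=\lambda^\ast$ and $\rho_r\big(u/\|u\|_{r,\Omega}\big)=1$, which is (i). Item (ii) is then immediate: $\rho_r(u)=\varphi_u(1)\le 1\iff 1\ge\lambda^\ast=\|u\|_{r,\Omega}$, and $\rho_r(u)=1$ forces $1=\lambda^\ast$ by strict monotonicity, so the regimes $>1$, $=1$, $<1$ correspond.

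For (iii)--(iv) I would simply substitute $\lambda=\|u\|_{r,\Omega}$ into the scaling estimate and use $\rho_r\big(u/\|u\|_{r,\Omega}\big)=1$ from (i): if $\|u\|_{r,\Omega}>1$ the case $\lambda\ge 1$ gives $\|u\|_{r,\Omega}^{-r_+}\rho_r(u)\le 1\le\|u\|_{r,\Omega}^{-r_-}\rho_r(u)$, i.e. $\|u\|_{r,\Omega}^{r_-}\le\rho_r(u)\le\|u\|_{r,\Omega}^{r_+}$, while if $0<\|u\|_{r,\Omega}<1$ the case $\lambda\le 1$ gives the reversed exponents (the case $u=0$ being trivial). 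Finally (v) follows from (ii)--(iv): if $\|u_n-u\|_{r,\Omega}\to 0$, then eventually $\|u_n-u\|_{r,\Omega}<1$ and (iv) yields $\rho_r(u_n-u)\le\|u_n-u\|_{r,\Omega}^{r_-}\to 0$; conversely, if $\rho_r(u_n-u)\to 0$ then eventually $\rho_r(u_n-u)<1$, hence $\|u_n-u\|_{r,\Omega}<1$ by (ii) and $\|u_n-u\|_{r,\Omega}\le\rho_r(u_n-u)^{1/r_+}\to 0$ by (iv).

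The only genuinely delicate step is the qualitative study of $\varphi_u$ — its continuity, its strict monotonicity, and the resulting attainment of the infimum defining the Luxemburg norm — and it is precisely here that $r_+<\infty$ (for the majorant) and $r_-\ge 1$ are used; everything else is bookkeeping with the two scaling inequalities. I note that the argument is insensitive to whether $dx$ or $dx/r(x)$ is used, provided the same normalization appears in both $\rho_r$ and the norm, since the two versions of the modular differ only by the bounded factors between $r_+^{-1}$ and $r_-^{-1}$.
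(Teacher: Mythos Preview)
The paper does not supply a proof of this proposition; it is quoted verbatim from \cite{Fan-Zhao} as a background result, so there is nothing in the paper to compare your argument against.

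Your proof is correct and is in fact the standard one: the two-sided scaling estimate $\lambda^{-r_\pm}\rho_r(u)\lessgtr\rho_r(u/\lambda)$, together with the continuity and strict monotonicity of $\varphi_u(\lambda)=\rho_r(u/\lambda)$, yields (i) directly, and (ii)--(v) are then immediate algebraic consequences exactly as you wrote. Your closing remark about normalizations is well taken and actually relevant here: the paper defines the Luxemburg norm with the measure $dx/r(x)$ but states the modular with $dx$, so item (i) as written is only literally true if one uses the same normalization in both (the remaining items survive up to harmless constants in $[r_+^{-1},r_-^{-1}]$, and the qualitative statements (ii) and (v) remain valid as stated).
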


\subsection{Fractional Sobolev spaces with variable exponents}

Let $s\in(0,1)$ and $p\in C(\overline{\Omega}\times\overline{\Omega})$  be such that $p$ is symmetric, i.e., $p(x,y) = p(y,x)$ for all $x,y\in \overline{\Omega}$ and
 $$
 1<p^-(\Omega):=\inf_{(x,y)\in\overline{\Omega}\times\overline{\Omega}}p(x,y)\leq p^+(\Omega):=\sup_{(x,y)\in\overline{\Omega}\times\overline{\Omega}}p(x,y)<+\infty.
 $$

Let $\bar{p}(x)=p(x,x),$ then $\bar{p}\in C_+(\overline{\Omega})$, and we define
$$
W^{s,\bar{p},p(.,.)}(\Omega)=\bigg{\{}u\in L^{\bar{p}(x)}(\Omega):\ \int_{\Omega}\int_{\Omega}\frac{|u(x)-u(y)|^{p(x,y)}}{|x-y|^{N+sp(x,y)}}dxdy<\infty\bigg{\}},
$$
and for $u\in W^{s,\bar{p},p(.,.)}(\Omega)$, set
$$
[u]_{s,p(.,.),\Omega}=\inf\bigg{\{}\lambda>0:\ \rho_{p(.,.)}\bigg{(}\frac{u}{\lambda}\bigg{)} <1\bigg{\}},
$$
where
$$
\rho_{p(.,.)}(u)=\int_{\Omega}\int_{\Omega}\frac{|u(x)-u(y)|^{p(x,y)}}{|x-y|^{N+s\tilde{p}(x,y)}}\frac{dxdy}{p(x,y)}.
$$
Then, $W^{s,\bar{p},p(.,.)}(\Omega)$  endowed with the norm
$$
\|u\|_{s,\bar{p},p,\Omega}:=\|u\|_{\bar{p},\Omega}+[u]_{s,p(.,.),\Omega},
$$
is a separable and reflexive Banach space (see \cite{Kaufmann, anouar1, anouar2}).

\begin{thm}\label{embedding in frac}(\cite{Ho-Kim})
Assume that $sp^+(\Omega)<N.$
Then, it holds that
\begin{enumerate}
  \item the embedding $W^{s,\bar{p},p(.,.)}(\mathbb{R}^N)\hookrightarrow L^{r(x)}(\mathbb{R}^N)$ is continuous for any $r\in C_+(\mathbb{R}^N)$ satisfying $p(x)\leq r(x)\ll p^{*}_s:=\frac{Np(x,x)}{N-sp(x,x)}$ for all $x\in \mathbb{R}^N$.
  \item if $\Omega$ is bounded, then the embedding $W^{s,p(.),p(.,.)}(\Omega)\hookrightarrow\hookrightarrow L^{r(x)}(\Omega)$ is compact  for any $r\in C_+(\Omega)$ satisfying $r(x)<p^{*}_s$ for all $x\in \overline{\Omega}$.
\end{enumerate}
\end{thm}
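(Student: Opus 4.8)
The plan is to deduce both statements from their constant‑exponent counterparts — the classical fractional Sobolev inequality $W^{s,q}\hookrightarrow L^{q^*_s}$ and the fractional Rellich--Kondrachov theorem $W^{t,q}(\Omega)\hookrightarrow\hookrightarrow L^{q}(\Omega)$ on bounded $\Omega$ — by ``freezing'' the exponent $p(\cdot,\cdot)$ on small balls, passing freely between the Luxemburg norm on $L^{r(\cdot)}$ and the modular $\rho_r(u)=\int_\Omega|u|^{r(x)}\,dx$ via the norm--modular relations recalled above. Note that $sp^+(\Omega)<N$ forces $sq<N$ for every constant $q\in[p^-(\Omega),p^+(\Omega)]$, so the frozen constant‑exponent embeddings are all available.

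For (1), I would first reduce to showing $\rho_r(u)\le C$ whenever $\|u\|_{s,\bar p,p,\mathbb{R}^N}\le1$; items (iii)--(iv) then give continuity after the homogenization $u\mapsto u/\|u\|$. Fix $\varepsilon>0$ with $r(x)+\varepsilon\le\frac{N\bar p(x)}{N-s\bar p(x)}$, which is possible since $r\ll p^*_s$, and split $\rho_r(u)=\int_{\{|u|\le1\}}|u|^{r(x)}+\int_{\{|u|>1\}}|u|^{r(x)}$. The first integral is at most $\int_{\mathbb{R}^N}|u|^{\bar p(x)}=\rho_{\bar p}(u)$ because $r\ge\bar p$ (so $|t|^{r(x)}\le|t|^{\bar p(x)}$ for $|t|\le1$), hence is controlled by $\|u\|_{\bar p}\le1$; this is the step that uses the hypothesis $\bar p\le r$ in an essential, \emph{global} way. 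For the second integral one localizes: by the uniform continuity of $p$ pick a \emph{single} radius $R>0$, independent of the center, so small that on every ball $B_R(x_0)$ the exponent $p$ varies by less than a prescribed amount; with $p_0:=\min_{B_R(x_0)\times B_R(x_0)}p$ one then has $r(x)\le(p_0)^*_s$ on $B_R(x_0)$, so $|u|^{r(x)}\le|u|^{(p_0)^*_s}$ there on $\{|u|>1\}$, and the frozen fractional Sobolev inequality on $B_R(x_0)$ (rescaled to the unit ball, with a constant depending only on $N,s,p_0\in[p^-,p^+]$, hence uniformly bounded) controls $\|u\|_{L^{(p_0)^*_s}(B_R(x_0))}$ by the frozen Gagliardo seminorm of $u$ on $B_R(x_0)$ and its local $L^{p_0}$‑norm. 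It is convenient to run the whole argument on the truncation $v=(|u|-1)_+\,\mathrm{sgn}\,u$, which satisfies $|v|\le|u|$, $\rho_{p(\cdot,\cdot)}(v)\le\rho_{p(\cdot,\cdot)}(u)$, and whose frozen $L^{p_0}$‑norm on $B_R(x_0)$ is dominated by $\big(\int_{B_R(x_0)}|u|^{\bar p(x)}\big)^{1/p_0}$ (since $v\ne0$ only where $|u|>1$, where $|v|^{p_0}\le|u|^{p_0}\le|u|^{\bar p(x)}$); comparing the frozen Gagliardo density with the genuine $p(\cdot,\cdot)$‑density then produces a per‑ball bound whose summable part is carried by $\rho_{p(\cdot,\cdot)}(u)$ and $\rho_{\bar p}(u)$. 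Summing over a countable, locally finite cover $\{B_R(x_j)\}$ of $\mathbb{R}^N$ of overlap $\le\kappa$, using $\sum_j\int_{B_R(x_j)}|u|^{\bar p(x)}\le\kappa\,\rho_{\bar p}(u)$ together with the elementary fact $\sum_j\mu_j\le M\Rightarrow\sum_j\mu_j^{\alpha}\le M^{\alpha}$ for $\alpha>1$, yields $\rho_r(u)\le C$; the far‑field part $|x-y|\ge1$ of $\rho_{p(\cdot,\cdot)}$ is absorbed into $\|u\|_{\bar p}$ exactly as in the constant‑exponent theory.

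For (2), with $\Omega$ bounded the localization errors become finite and the proof is shorter. A Young‑inequality comparison of modulars (split once more according to the size of the Gagliardo density) gives the continuous embedding $W^{s,\bar p,p(\cdot,\cdot)}(\Omega)\hookrightarrow W^{\bar t,\bar q}(\Omega)$ for fixed constants $\bar t\in(0,s)$, $\bar q\in(1,p^-)$, whence $W^{\bar t,\bar q}(\Omega)\hookrightarrow\hookrightarrow L^{\bar q}(\Omega)$ by the classical fractional Rellich--Kondrachov theorem. Since $r(x)<p^*_s$ on the compact set $\overline\Omega$, part (1) furnishes a continuous exponent $h$ with $\bar q\le r(x)<h(x)\ll p^*_s$ and a continuous embedding into $L^{h(\cdot)}(\Omega)$. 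Given $(u_n)$ bounded in $W^{s,\bar p,p(\cdot,\cdot)}(\Omega)$, a subsequence converges in $L^{\bar q}(\Omega)$, hence in measure, to some $u\in L^{h(\cdot)}(\Omega)$; then $(|u_n-u|^{r(\cdot)})_n$ is bounded in $L^{h(\cdot)/r(\cdot)}(\Omega)$ with $\inf(h/r)>1$, hence uniformly integrable on the finite‑measure set $\Omega$, so Vitali's theorem gives $\rho_r(u_n-u)\to0$ and item (v) upgrades this to $\|u_n-u\|_{r,\Omega}\to0$, which is the compactness.

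The step I expect to be the main obstacle is the globalization in part (1): a naive sum of per‑ball estimates over the infinitely many balls covering $\mathbb{R}^N$ carries additive errors that diverge, so the estimate must be arranged so that \emph{every} contribution not accompanied by a vanishing factor is absorbed, once and for all, into the two global quantities $\rho_{p(\cdot,\cdot)}(u)$ and $\rho_{\bar p}(u)$ — this is precisely where the hypothesis $\bar p\le r$, the global integrability $u\in L^{\bar p(\cdot)}(\mathbb{R}^N)$, the uniform continuity of $p$ (giving a common radius $R$ and uniformly bounded frozen Sobolev constants), and the summability trick for $\sum_j\mu_j^{\alpha}$ must be used in concert. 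By contrast, the modular comparison with $W^{\bar t,\bar q}$ on bounded domains and the Vitali argument in part (2) are routine.
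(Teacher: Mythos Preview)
The paper does not prove this theorem at all: it is stated as a quotation from \cite{Ho-Kim} and no argument is given. There is therefore nothing in the paper to compare your sketch against.

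That said, your outline is essentially the strategy used in the cited reference: freeze $p(\cdot,\cdot)$ on small balls where it is nearly constant, apply the constant-exponent fractional Sobolev inequality on each ball, and globalize by summing over a bounded-overlap cover, with the modular--norm relations handling the passage back to the Luxemburg norm. Your identification of the delicate point---controlling the additive errors in the sum over infinitely many balls in $\mathbb{R}^N$, which is where the hypothesis $\bar p\le r$ and the global $L^{\bar p(\cdot)}$-integrability are genuinely needed---is accurate. For part (2) your route via an intermediate constant-exponent space $W^{\bar t,\bar q}(\Omega)$ plus Vitali is a clean alternative to repeating the localization argument with a compactness twist; either works on bounded domains. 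If you want to turn the sketch into a full proof, the place requiring the most care is the comparison between the frozen Gagliardo density $|u(x)-u(y)|^{p_0}/|x-y|^{N+sp_0}$ and the genuine variable-exponent density on each ball: one must check that the discrepancy really is summable after the cover, which in practice forces a slight lowering of the fractional order (replacing $s$ by some $t<s$) or an explicit use of the log-H\"older-type modulus of continuity of $p$.
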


Invoking the continuity of $p$ on $\bar{\Omega} \times \bar{\Omega}$ we extend $p$ to $\mathbb{R}^N \times \mathbb{R}^N$ by using Tietze extension theorem, such that
\begin{equation}\label{P}\tag{P}
  1<\inf _{(x, y) \in \mathbb{R}^N \times \mathbb{R}^N} p(x, y) \leq \sup _{(x, y) \in \mathbb{R}^N \times \mathbb{R}^N} p(x, y)<\frac{N}{s}.
\end{equation}
 We now define the following space:
$$
X=\left\{u \in W^{s,\bar{p}, p(\cdot, \cdot)}\left(\mathbb{R}^N\right): \quad u=0 \text { on } \Omega^c\right\}
$$
endowed with norm
$$
\|u\|_X=[u]_{s, p(\cdot, \cdot), \mathbb{R}^N},
$$
where $W^{s,\bar{p}, p(\cdot, \cdot)}\left(\mathbb{R}^N\right)$ and $[u]_{s, p(\cdot, \cdot) \mathbb{R}^N}$ are defined in the same ways as $W^{s,\bar{p}, p(\cdot, \cdot)}(\Omega)$ and $[u]_{s, p(\cdot, \cdot), \Omega}$ with $\Omega$ replaced by $\mathbb{R}^N$. Obviously, $X$ is a closed subspace of $W^{s,\bar{p}, p(\cdot, \cdot)}\left(\mathbb{R}^N\right)$ and hence, $\left(X,\|\cdot\|_X\right)$ is a reflexive and separable Banach space.

\begin{lema}
  The functional $\rho_{p(.,.)}: X \rightarrow \mathbb{R}$ defined by
$$
\rho_{p(.,.)}(u)=\int_{\mathbb{R}^N} \int_{\mathbb{R}^N} \frac{|u(x)-u(y)|^{p(x, y)}}{|x-y|^{N+s p(x, y)}} \frac{dxdy}{p(x,y)},
$$
has the following properties:
\begin{itemize}
  \item [(i)] for $\alpha>0,\|u\|_X=(>,<) \alpha$ if and only if $\rho_{p(.,.)}\left(\frac{u}{\alpha}\right)=(>,<) 1$,
  \item [(ii)] if $\|u\|_X>1$, then $\|u\|_X^{p^{-}} \leq \rho_{p(.,.)}(u) \leq\|u\|_X^{p^{+}}$,
  \item [(iii)] if $\|u\|_X<1$, then $\|u\|_X^{p^{+}} \leq \rho_{p(.,.)}(u) \leq\|u\|_X^{p^{-}}$.
\end{itemize}
\end{lema}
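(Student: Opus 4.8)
The plan is to reduce this lemma about $\rho_{p(\cdot,\cdot)}$ and the norm $\|\cdot\|_X$ to the general norm--modular relation for modular spaces, exactly as in Proposition 2.2 for the Lebesgue case. The key observation is that $\|u\|_X = [u]_{s,p(\cdot,\cdot),\mathbb{R}^N}$ is precisely the Luxemburg--type gauge associated with the convex modular $\rho_{p(\cdot,\cdot)}$, that is, $\|u\|_X = \inf\{\lambda>0 : \rho_{p(\cdot,\cdot)}(u/\lambda)\le 1\}$, and that $\rho_{p(\cdot,\cdot)}$ is convex, even, and satisfies $\rho_{p(\cdot,\cdot)}(\lambda u) \to 0$ as $\lambda \to 0^+$ together with the homogeneity-type bounds coming from $p^- \le p(x,y) \le p^+$. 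So the first step I would carry out is to record these structural facts about $\rho_{p(\cdot,\cdot)}$: convexity (from convexity of $t\mapsto |t|^{p(x,y)}$ and of the integral), the fact that $\lambda \mapsto \rho_{p(\cdot,\cdot)}(\lambda u)$ is continuous, nondecreasing on $[0,\infty)$, and strictly increasing where it is positive and finite, and that $\rho_{p(\cdot,\cdot)}(u) < \infty$ for $u \in X$.

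For part (i): I would first prove the ``$=$'' case. If $\|u\|_X = \alpha$, then by definition of the infimum there is a sequence $\lambda_n \downarrow \alpha$ with $\rho_{p(\cdot,\cdot)}(u/\lambda_n) \le 1$; by Fatou's lemma (applied to the integrand, which converges pointwise as $\lambda_n \to \alpha$) we get $\rho_{p(\cdot,\cdot)}(u/\alpha) \le 1$. For the reverse inequality $\rho_{p(\cdot,\cdot)}(u/\alpha) \ge 1$, suppose instead $\rho_{p(\cdot,\cdot)}(u/\alpha) < 1$; then by continuity of $\lambda \mapsto \rho_{p(\cdot,\cdot)}(u/\lambda)$ there is $\lambda < \alpha$ with $\rho_{p(\cdot,\cdot)}(u/\lambda) \le 1$, contradicting $\|u\|_X = \alpha = \inf$. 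Hence $\rho_{p(\cdot,\cdot)}(u/\alpha) = 1$. The strict inequality cases ``$>$'' and ``$<$'' then follow from monotonicity of $\lambda \mapsto \rho_{p(\cdot,\cdot)}(u/\lambda)$: e.g.\ if $\|u\|_X > \alpha$ then $u/\alpha = (\|u\|_X/\alpha)\cdot(u/\|u\|_X)$ with $\|u\|_X/\alpha > 1$, and since $\rho_{p(\cdot,\cdot)}(u/\|u\|_X) = 1 > 0$, strict monotonicity gives $\rho_{p(\cdot,\cdot)}(u/\alpha) > 1$; symmetrically for ``$<$''. Care is needed here about the possibility that the modular is identically $0$, which forces $u$ to be a.e.\ constant, hence $u \equiv 0$ on $\mathbb{R}^N$ since $u = 0$ on $\Omega^c$; the case $u = 0$ is trivial and can be set aside at the outset.

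For parts (ii) and (iii): assume $\|u\|_X = \alpha > 1$. By (i), $\rho_{p(\cdot,\cdot)}(u/\alpha) = 1$, i.e.
$$
\int_{\mathbb{R}^N}\int_{\mathbb{R}^N} \frac{|u(x)-u(y)|^{p(x,y)}}{|x-y|^{N+sp(x,y)}}\,\frac{1}{\alpha^{p(x,y)}}\,\frac{dxdy}{p(x,y)} = 1.
$$
Since $\alpha > 1$ and $p^- \le p(x,y) \le p^+$, we have $\alpha^{-p^+} \le \alpha^{-p(x,y)} \le \alpha^{-p^-}$ pointwise, so $\alpha^{-p^+}\rho_{p(\cdot,\cdot)}(u) \le 1 \le \alpha^{-p^-}\rho_{p(\cdot,\cdot)}(u)$, which rearranges to $\alpha^{p^-} \le \rho_{p(\cdot,\cdot)}(u) \le \alpha^{p^+}$, i.e.\ $\|u\|_X^{p^-} \le \rho_{p(\cdot,\cdot)}(u) \le \|u\|_X^{p^+}$. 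Part (iii) is identical with the inequalities on the powers of $\alpha$ reversed because now $0 < \alpha < 1$.

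The main obstacle is really only in part (i): one must handle carefully that the infimum defining $\|u\|_X$ is attained in the modular (the Fatou argument) and that $\lambda \mapsto \rho_{p(\cdot,\cdot)}(u/\lambda)$ genuinely changes strictly monotonically, which requires excluding the degenerate constant case and invoking that $X$ consists of functions vanishing outside $\Omega$. Everything else is a routine transcription of the standard Musielak--Orlicz norm--modular inequalities, and parts (ii)--(iii) are immediate once (i) is in hand. I would also remark that the additional factor $1/p(x,y)$ inside the modular is harmless since $p^- \le p(x,y) \le p^+$ makes it comparable to a constant, so it does not affect any of the arguments.
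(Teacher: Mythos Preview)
Your proof is correct and follows the standard norm--modular argument for Luxemburg-type norms in Musielak--Orlicz spaces. The paper itself does not supply a proof of this lemma: it is stated in the preliminaries section without proof, in parallel with Proposition~2.2 (which is cited from \cite{Fan-Zhao}) as a standard fact about modulars and their associated norms. Your write-up would serve as a complete justification; the only minor remark is that the continuity of $\lambda \mapsto \rho_{p(\cdot,\cdot)}(u/\lambda)$ on $(0,\infty)$ for $u\in X$ deserves a one-line justification (dominated convergence, using $\rho_{p(\cdot,\cdot)}(u/\lambda_0)<\infty$ for some $\lambda_0$), but this is routine.
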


\begin{thm}[\cite{Bahrouni-Ho}]\label{compact-embedding}
Let $\Omega \subset \mathbb{R}^N$ be a bounded Lipschitz domain and let $s \in(0,1)$. Let $p \in$ $C\left(\mathbb{R}^N \times \mathbb{R}^N\right)$ satisfy \eqref{P}. Then, for any $r \in C(\bar{\Omega})$ satisfying
\begin{equation}\label{R}\tag{R}
  r(x)<p^{*}_s\quad \text{for all}\quad x\in \overline{\Omega},
\end{equation}
there exists a constant $C>0$ such that
\begin{equation}\label{Sobolev-inequality}
  \|u\|_{r,\Omega}\leq C \|u\|_X,\quad \forall\ u\in X.
\end{equation}
  Moreover, the embedding $X\hookrightarrow L^{r(.)}(\Omega)$ is compact.
\end{thm}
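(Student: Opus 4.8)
The statement we must prove is a Sobolev-type inequality $\|u\|_{r,\Omega} \le C\|u\|_X$ together with the compactness of the embedding $X \hookrightarrow L^{r(\cdot)}(\Omega)$, for $\Omega$ a bounded Lipschitz domain and $r \in C(\bar\Omega)$ subcritical, i.e. $r(x) < p^*_s(x)$ everywhere on $\bar\Omega$. Note this is attributed to \cite{Bahrouni-Ho}, so the intended argument presumably follows that reference; let me nonetheless sketch how I would attack it from the tools in the excerpt.

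The plan is to reduce the variable-exponent statement to the constant-exponent fractional Sobolev theory by a "freezing the exponent" / covering argument. First I would invoke Theorem \ref{embedding in frac}(1): since $p$ satisfies \eqref{P} (so $sp^+ < N$), the space $W^{s,\bar p, p(\cdot,\cdot)}(\mathbb{R}^N)$ embeds continuously into $L^{q(\cdot)}(\mathbb{R}^N)$ for any $q \in C_+(\mathbb{R}^N)$ with $\bar p(x) \le q(x) \ll p^*_s(x)$. Because $X$ is the closed subspace of functions vanishing on $\Omega^c$ and $\|\cdot\|_X = [\,\cdot\,]_{s,p(\cdot,\cdot),\mathbb{R}^N}$, a Poincaré-type inequality on the bounded set $\Omega$ (using that $u=0$ outside $\Omega$, so the full seminorm controls $\|u\|_{\bar p, \mathbb{R}^N}$) upgrades $[u]_{s,p(\cdot,\cdot)}$ to the full norm $\|u\|_{W^{s,\bar p,p(\cdot,\cdot)}}$, and then restriction to $\Omega$ gives the continuous embedding $X \hookrightarrow L^{q(\cdot)}(\Omega)$; choosing $q$ with $r(x) \le q(x) \ll p^*_s(x)$ (possible since $r < p^*_s$ on the compact set $\bar\Omega$, so a continuous $q$ squeezed strictly between them exists) and using $L^{q(\cdot)}(\Omega) \hookrightarrow L^{r(\cdot)}(\Omega)$ on the bounded domain yields \eqref{Sobolev-inequality}.

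For compactness I would argue as follows. Fix a point $x_0 \in \bar\Omega$ and a small ball $B_\rho(x_0)$; on this ball the exponent $p(x,y)$ is close to the constant $p(x_0,x_0)$, and a bounded set in $X$ is bounded in the constant-exponent fractional Sobolev space $W^{s_0, p_0}(B_\rho)$ for $p_0$ slightly below $\inf p$ on the ball, which embeds compactly into $L^{r_0}(B_\rho)$ for $r_0$ below the corresponding critical exponent — here one uses the classical Rellich–Kondrachov-type compactness for (constant-exponent) fractional Sobolev spaces together with the interplay between modular and norm (the Lemma and Proposition following it). A finite cover of the compact set $\bar\Omega$ by such balls, a partition of unity, and a diagonal extraction of a convergent subsequence in each $L^{r(\cdot)}(B_{\rho_j})$ then give convergence in $L^{r(\cdot)}(\Omega)$; alternatively one localizes the subcriticality gap, picks $r(x) < q(x) < p^*_s(x)$ with $X \hookrightarrow L^{q(\cdot)}$ continuous, obtains an a.e.-convergent subsequence from the local compact embeddings, and upgrades to $L^{r(\cdot)}$-convergence via a Vitali/uniform-integrability argument using the $L^{q(\cdot)}$-bound and Proposition (v) relating norm and modular convergence.

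The main obstacle I expect is the bookkeeping in the covering/freezing step: the critical exponent $p^*_s(x) = \frac{Np(x,x)}{N - sp(x,x)}$ is itself a variable function, so one must choose the covering balls small enough that the \emph{local} subcriticality gap $r(x) < p^*_s(x)$ is uniform on each ball, and simultaneously control the error terms coming from $|p(x,y) - p_0|$ inside the double integral defining the Gagliardo modular (these enter as $|x-y|^{\pm s|p(x,y)-p_0|}$ factors, bounded on the bounded domain but needing care). Keeping these estimates uniform over a finite subcover, and patching the local compactness into global $L^{r(\cdot)}(\Omega)$-compactness, is the technical heart; everything else is a routine application of Theorem \ref{embedding in frac}, the modular–norm relations, and standard fractional Sobolev embeddings.
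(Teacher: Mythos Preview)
The paper does not give a proof of this theorem at all: it is quoted with the citation \cite{Bahrouni-Ho} and used as a black box in the eigenvalue section, so there is no ``paper's own proof'' to compare your proposal against. Your sketch --- continuous embedding via Theorem~\ref{embedding in frac} plus a Poincar\'e inequality on $X$, and compactness via a freezing-of-the-exponent covering argument reducing to the constant-exponent Rellich--Kondrachov theorem --- is the standard route in the variable-exponent literature and is essentially what is carried out in the cited reference, so your plan is appropriate; just be aware that in this manuscript the result is taken for granted rather than reproved.
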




\section{Peridynamics}
In this section we will developed the theory for the peridynamic fractional anisotropic Sobolev space and the \textit{peridynamic fractional anisotropic p-laplacian}.
\subsection{Peridynamic fractional anisotropic Sobolev space}
 Given a real number $\delta>0$, $0<s<1$ and $1<p<\infty$. We will analyze, the behavior as $\delta\to 0$ of functional of the form
\begin{equation}\label{seminorm-i}
	[u]_{s,p,\delta}^i=\int_{\R^n}\int_{|h|\leq \delta}\frac{|u(x+he_i)-u(x)|^p}{|h|^{1+sp}}\,dh\,dx.
\end{equation}
First, we need to define an appropriate functional space to work within; therefore, we introduce the anisotropic peridynamical Sobolev space as follows:
\begin{equation}
    W^{s,p,\delta}_i(\R^n)\colon=\{u \in L^p(\R^n), \text{ such that }[u]_{s,p,\delta}^i<\infty\}.
\end{equation}
This space is endowed with the norm
$$
\|u\|_{s,p,\delta}^i=\left([u]_{s,p,\delta}^i\right)^{\frac{1}{p}}+\|u\|_{p},
$$
is a reflexive and separeble Banach space.\\
 Now, in order to control every variable, we proceed as follows. Let $\vec{s}=(s_1,\dots, s_n)$, $\vec{p}=(p_1,\dots, p_n)$ and $\vec{\delta}=\{\delta_1,\dots,\delta_n\}$ with $s_i\in (0,1]$, $p_i\in (1,\infty)$ and $\delta_i>0$ for $i=1,\dots, n$. Then define
$$
W^{\vec{s},\vec{p},\vec{\delta} }(\R^n)=\bigcap_{i=1}^n W^{s_i,p_i,\delta_i}_i(\R^n).
$$
This anisotropic Sobolev space is then a separable and reflexive Banach space with norm
$$
\|u\|_{\vec{s},\vec{p},\vec{\delta}} = \sum_{i=1}^n \|u\|_{s_i,p_i,\delta_i}^i.
$$
Observe that the case when each $\delta_i$ tends to infinity corresponds to the Sobolev space studied in \cite{CKW1}. In that paper, the authors define the quantities
$$
\overline{\vec{s}}=\left(\frac{1}{n}\sum_{i=1}^n\frac{1}{s_i}\right)^{-1},  \qquad  \overline{\vec{s}\vec{p}}=\left(\frac{1}{n}\sum_{i=1}^n\frac{1}{s_ip_i}\right)^{-1}.
$$
Then, assuming that $\overline{\vec{s}\vec{p}} < n$ the following critical Sobolev exponent is introduced
\begin{equation}
\vec{p}_{\vec{s}}\ ^* = \vec{p}\ ^*=\frac{n\overline{\vec{s}\vec{p}}/\overline{\vec{s}}}{n-\overline{\vec{s}\vec{p}}}.
\end{equation}
The following theorem is a straightforward adaptation of Theorem 1.1 of \cite{CKW1} and the proof is omitted.
\begin{thm}
    Let $ s_1, \ldots, s_n \in [s_0, 1) $ for some $s_0 \in (0, 1) $ and $ p_1, \ldots, p_n > 1 $ be such that $\overline{\vec{s}\vec{p}} < n$. Then $ W^{\vec{s}, \vec{p},\vec{\delta}}(\R^n) \subset\subset L^p_{\text{loc}}(\R^n) $ for every $ 1 \leq p < \vec{p}\ ^* $. Moreover, $ W^{\vec{s}, \vec{p},\vec{\delta}}(\R^n) \subset L^p_{\text{loc}}(\R^n) $  continuously for every $ 1 \leq p \leq  \vec{p}\ ^* $.
\end{thm}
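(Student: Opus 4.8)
\textit{Proof sketch.} The plan is to deduce the statement from Theorem~1.1 of \cite{CKW1} after checking that, for a fixed vector $\vec{\delta}$ of positive truncation radii, the peridynamic space $W^{\vec{s},\vec{p},\vec{\delta}}(\R^n)$ coincides — with equivalent norms — with the (non-truncated) anisotropic fractional Sobolev space of \cite{CKW1}. The bridge between the two is a short tail estimate: for $u\in L^{p_i}(\R^n)$, writing the full Gagliardo-type energy as $\mathcal{G}_i(u)=\int_{\R^n}\int_{\R}\frac{|u(x+he_i)-u(x)|^{p_i}}{|h|^{1+s_ip_i}}\,dh\,dx$ and splitting the inner integral at $|h|=\delta_i$, the part over $|h|\le\delta_i$ is precisely $[u]_{s_i,p_i,\delta_i}^i$, while the part over $|h|>\delta_i$ is controlled using $|a-b|^{p_i}\le 2^{p_i-1}(|a|^{p_i}+|b|^{p_i})$, Fubini together with translation invariance of Lebesgue measure, and $\int_{|h|>\delta_i}|h|^{-1-s_ip_i}\,dh=\tfrac{2}{s_ip_i}\delta_i^{-s_ip_i}$. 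This yields
$$
[u]_{s_i,p_i,\delta_i}^i\ \le\ \mathcal{G}_i(u)\ \le\ [u]_{s_i,p_i,\delta_i}^i+C_i\,\|u\|_{p_i}^{p_i},\qquad C_i=C_i(n,p_i,s_i,\delta_i),
$$
so in particular $\mathcal{G}_i(u)<\infty$ if and only if $[u]_{s_i,p_i,\delta_i}^i<\infty$.

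Combining these for $i=1,\dots,n$, it follows that $u\in W^{\vec{s},\vec{p},\vec{\delta}}(\R^n)$ if and only if $u\in\bigcap_{i=1}^n L^{p_i}(\R^n)$ and $\mathcal{G}_i(u)<\infty$ for every $i$, i.e. if and only if $u$ belongs to the anisotropic fractional Sobolev space of \cite{CKW1}; moreover, since $\vec{\delta}$ is fixed, the corresponding norms are equivalent, with constants depending on $\vec{\delta}$. Because the critical exponent $\vec{p}^{\,*}$ defined above is the one used in \cite{CKW1}, and the hypotheses $s_i\in[s_0,1)$, $p_i>1$, $\overline{\vec{s}\vec{p}}<n$ are exactly those of Theorem~1.1 there, that theorem gives the compact inclusion into $L^p_{\text{loc}}(\R^n)$ for $1\le p<\vec{p}^{\,*}$ and the continuous one for $1\le p\le\vec{p}^{\,*}$. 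Transporting these through the norm equivalence gives the claim.

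The only point requiring attention — and the reason the authors describe this as a ``straightforward adaptation'' — is reconciling the definition of the space used here with the one in \cite{CKW1}: one must verify that the class ``$u\in\bigcap_i L^{p_i}$ with all $\mathcal{G}_i(u)<\infty$'' is contained in the class to which Theorem~1.1 of \cite{CKW1} applies, so that the embedding can be pulled back. Should \cite{CKW1} impose a weaker a priori integrability than $\bigcap_i L^{p_i}$, one instead obtains only a continuous inclusion $W^{\vec{s},\vec{p},\vec{\delta}}(\R^n)\hookrightarrow W^{\vec{s},\vec{p}}(\R^n)$, which is still enough to conclude. Beyond the elementary tail bound above, no new estimate is needed.
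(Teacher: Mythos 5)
The paper omits the proof entirely, describing it as a ``straightforward adaptation of Theorem 1.1 of \cite{CKW1}''; your argument supplies exactly that adaptation. The tail estimate over $|h|>\delta_i$, bounded via $|a-b|^{p_i}\le 2^{p_i-1}(|a|^{p_i}+|b|^{p_i})$ and Fubini by $C_i\|u\|_{p_i}^{p_i}$, correctly shows that for a fixed vector $\vec{\delta}$ of positive radii the peridynamic norm is equivalent to the full anisotropic Gagliardo-type norm, so $W^{\vec{s},\vec{p},\vec{\delta}}(\R^n)$ coincides (or at least embeds continuously, which suffices) with the space treated in \cite{CKW1}, and the compact and continuous inclusions transfer as claimed.
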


\subsection{BBM-type results for $W_i^{s,p,\delta}(\R^n)$}
Let us start by revising several classical lemmas to establish the BBM-type results.

In the rest of this section we will use the following notation $D_i(u)(x)=|u(x+he_i)-u(x)|.$

\begin{lema}\label{lemma1}
	Let $u\in W^{1,p}_i(\R^N)$. Therefore the following estimate holds
	$$[u]_{s,p,\delta}^i\leq  2\frac{\delta^{p(1-s)}}{p(1-s)}\left\|\partial_i u\right\|_{p}^{p}.$$
\end{lema}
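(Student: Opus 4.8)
The plan is to start from the definition of $[u]_{s,p,\delta}^i$ and bound the difference quotient $D_i(u)(x) = |u(x+he_i) - u(x)|$ using the fact that $u \in W^{1,p}_i(\R^N)$, i.e. that the (distributional) partial derivative $\partial_i u$ belongs to $L^p(\R^N)$. First I would reduce to smooth functions: by density of smooth functions in $W^{1,p}_i$ (or by a standard mollification argument) it suffices to prove the inequality for $u$ smooth and then pass to the limit, since both sides of the claimed estimate are continuous with respect to the $W^{1,p}_i$-norm. For smooth $u$ one writes the exact identity
$$
u(x+he_i) - u(x) = \int_0^1 \partial_i u(x + the_i)\, h\, dt,
$$
so that, by Jensen's (or Minkowski's integral) inequality applied to the probability measure $dt$ on $[0,1]$,
$$
|u(x+he_i) - u(x)|^p \le |h|^p \int_0^1 |\partial_i u(x+the_i)|^p\, dt.
$$

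Next I would insert this pointwise bound into the definition of the seminorm and use Tonelli's theorem to exchange the order of integration in $x$, $h$ and $t$:
$$
[u]_{s,p,\delta}^i \le \int_{\R^N}\int_{|h|\le\delta} \frac{|h|^p}{|h|^{1+sp}} \int_0^1 |\partial_i u(x+the_i)|^p\, dt\, dh\, dx.
$$
For each fixed $h$ and $t$, the translation invariance of Lebesgue measure gives $\int_{\R^N} |\partial_i u(x+the_i)|^p\, dx = \|\partial_i u\|_p^p$, and the $t$-integral of the constant is $1$. Hence
$$
[u]_{s,p,\delta}^i \le \|\partial_i u\|_p^p \int_{|h|\le\delta} |h|^{p-1-sp}\, dh.
$$
Since $h$ ranges over the one-dimensional interval $[-\delta,\delta]$ (the seminorm integrates $dh$ over $\{|h|\le\delta\}\subset\R$), this last integral equals $2\int_0^\delta h^{p(1-s)-1}\, dh = 2\,\delta^{p(1-s)}/(p(1-s))$, which is finite because $s<1$ forces the exponent $p(1-s)-1 > -1$. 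Combining the displays yields exactly
$$
[u]_{s,p,\delta}^i \le 2\,\frac{\delta^{p(1-s)}}{p(1-s)}\, \|\partial_i u\|_p^p,
$$
and the general case follows by density.

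The only genuine subtlety is the passage to smooth functions: one must know that smooth functions (or at least functions for which the fundamental-theorem-of-calculus representation is valid along the $e_i$-direction) are dense in $W^{1,p}_i(\R^N)$, and that $[\cdot]_{s,p,\delta}^i$ is lower semicontinuous (in fact continuous) along an approximating sequence converging in $W^{1,p}_i$-norm — both standard facts. Alternatively, one can avoid density entirely by invoking the characterization of $W^{1,p}$ via $L^p$-bounded difference quotients: for $u\in W^{1,p}_i$ the identity $u(x+he_i)-u(x) = \int_0^h \partial_i u(x+\tau e_i)\, d\tau$ holds for a.e. $x$ directly, after which the same Jensen/Tonelli computation applies verbatim. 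I expect this to be the main (and essentially only) point requiring care; the remaining steps are routine.
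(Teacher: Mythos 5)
Your proof is correct and follows essentially the same route as the paper: both reduce to smooth compactly supported functions by density, bound the directional difference by $|h|^p\|\partial_i u\|_p^p$, and then compute $\int_{|h|\le\delta}|h|^{p-1-sp}\,dh = 2\delta^{p(1-s)}/(p(1-s))$. The only difference is that you spell out the ``well-known'' translation estimate $\int_{\R^N}|u(x+he_i)-u(x)|^p\,dx\le|h|^p\|\partial_i u\|_p^p$ via the fundamental theorem of calculus, Jensen's inequality, and Tonelli, whereas the paper simply cites it.
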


\begin{proof}
	Given $u \in C^1_c(\R^N)$ it is a well known fact that
	$$\int_{\R^N}|D_i(u)(x)|^{p}\, dx\leq |h|^{p} \|\partial_i u\|_{p}^{p},
	$$
	then
	\begin{align*}
		\int_{|h|<\delta}\int_{\R^N}\frac{|D_i(u,x)|^{p}}{|h|^{1+sp}}\, dx dh =& \int_{|h|<\delta}\frac{1}{|h|^{1+sp}}\left(\int_{\R^N}|D_i(u)(x)|^{p}\, dx\right)\, dh
		\\&\leq \left\|\partial_i u\right\|_{p}^{p}2\int_{0}^{\delta}\frac{h^p}{|h|^{1+sp}}\, dh\\
		&=2\frac{\delta^{p(1-s)}}{p(1-s)}\left\|\partial_i u\right\|_{p}^{p}.
	\end{align*}
	
	We conclude the proof using the density of smooth functions with compact support in $W^{1,p}_i(\R^N)$.
\end{proof}

The next lemma shows that the functional $[\cdot]_{s,p,\delta}^i$ decreases when one regularizes a function $u$ with a standard mollifier and shows the behavior with respect to truncations.  The proof is in \cite[Lemma 2.3]{Ceresa-Bonder1}.

\begin{lema}\label{regular}
On one hand, let $u \in L^p(\R^N)$, $\rho_\varepsilon$ is the standard mollifier, then $u_\epsilon=\rho_\varepsilon * u $ verifies
	$$
	[u_\varepsilon]_{s,p,\delta}^i \leq [u]_{s,p,\delta}^i
	$$
for every $ \varepsilon >0$.

On the other hand, given $\eta\in C^1_c(\R^N)$ be such that $\eta =1$ on $B_1$, $\eta=0$ on $B_2^c$, $0\le \eta\le 1$ and $\|\nabla \eta\|_\infty\le 2$. Given $k\in\N$, we define $\eta_k(x)=\eta(x/k)$ . If  $u_k=\eta_ku$ is the truncation of $u$ by $\eta_k$, then
	$$
	[u_k]_{s,p,\delta}^i \leq   2^{p-1} [u]_{s,p,\delta}^i +\frac{2^{2p}\delta^{p(1-s)}}{k^pp(1-s)}\|u\|_p^p.
	$$
\end{lema}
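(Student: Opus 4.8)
The plan is to handle the two assertions separately, each by a direct computation on the seminorm $[\cdot]_{s,p,\delta}^i$, which only involves translations in the single direction $e_i$ over the truncated range $|h|\le\delta$.

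For the mollification estimate, I would write $u_\varepsilon(x)=\int_{\R^N}\rho_\varepsilon(z)u(x-z)\,dz$ and note that $D_i(u_\varepsilon)(x)=\int_{\R^N}\rho_\varepsilon(z)\,\big(u(x-z+he_i)-u(x-z)\big)\,dz$, i.e. $D_i(u_\varepsilon)(x)$ is an average (against the probability density $\rho_\varepsilon$) of the translates $D_i(u)(x-z)$. By Jensen's inequality applied to the convex function $t\mapsto|t|^p$, one gets $|D_i(u_\varepsilon)(x)|^p\le\int_{\R^N}\rho_\varepsilon(z)\,|D_i(u)(x-z)|^p\,dz$. Integrating in $x$ over $\R^N$, Fubini and the translation invariance of Lebesgue measure give $\int_{\R^N}|D_i(u_\varepsilon)(x)|^p\,dx\le\int_{\R^N}|D_i(u)(x)|^p\,dx$ for each fixed $h$. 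Dividing by $|h|^{1+sp}$ and integrating over $|h|\le\delta$ yields $[u_\varepsilon]_{s,p,\delta}^i\le[u]_{s,p,\delta}^i$. This is essentially the argument of \cite[Lemma 2.3]{Ceresa-Bonder1}; there is no real obstacle here beyond bookkeeping.

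For the truncation estimate, I would start from $D_i(u_k)(x)=\eta_k(x+he_i)u(x+he_i)-\eta_k(x)u(x)$ and split it as
\begin{equation*}
D_i(u_k)(x)=\eta_k(x+he_i)\big(u(x+he_i)-u(x)\big)+u(x)\big(\eta_k(x+he_i)-\eta_k(x)\big).
\end{equation*}
Using the elementary inequality $|a+b|^p\le 2^{p-1}(|a|^p+|b|^p)$ together with $0\le\eta_k\le1$, we obtain $|D_i(u_k)(x)|^p\le 2^{p-1}|D_i(u)(x)|^p+2^{p-1}|u(x)|^p|D_i(\eta_k)(x)|^p$. The first term, after dividing by $|h|^{1+sp}$ and integrating, contributes $2^{p-1}[u]_{s,p,\delta}^i$. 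For the second term, the Lipschitz bound $\|\nabla\eta_k\|_\infty=k^{-1}\|\nabla\eta\|_\infty\le 2/k$ gives $|D_i(\eta_k)(x)|=|\eta_k(x+he_i)-\eta_k(x)|\le (2/k)|h|$, hence $|D_i(\eta_k)(x)|^p\le (2/k)^p|h|^p$; integrating $|u(x)|^p$ in $x$ produces $\|u\|_p^p$, and $\int_{|h|\le\delta}|h|^p/|h|^{1+sp}\,dh=2\int_0^\delta h^{p(1-s)-1}\,dh=2\delta^{p(1-s)}/(p(1-s))$. Multiplying the constants, $2^{p-1}\cdot 2^p\cdot 2=2^{2p}$, gives exactly the stated term $\dfrac{2^{2p}\delta^{p(1-s)}}{k^p\,p(1-s)}\|u\|_p^p$.

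The only point requiring a little care is checking that the splitting and the elementary inequalities produce precisely the constant $2^{2p}$ advertised in the statement (the factors $2^{p-1}$ from the convexity inequality, $2^p$ from $\|\nabla\eta\|_\infty\le2$, and $2$ from integrating $|h|^{p(1-s)-1}$ over $(-\delta,\delta)$ combine correctly), and that all integrals are finite — which follows since $u\in L^p(\R^N)$ and $\eta_k$ is smooth with compact support, so $u_k\in L^p$ and the second term is manifestly finite, while the first is finite by hypothesis once we know $u\in W^{s_i,p_i,\delta_i}_i$ in the relevant setting. There is no genuine analytic obstacle; it is a matter of organizing the estimate cleanly.
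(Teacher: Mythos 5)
Your proof is correct. The paper does not actually prove this lemma in the text; it delegates to \cite[Lemma 2.3]{Ceresa-Bonder1}, which treats the untruncated seminorm (integration over all $h\in\R$), and your argument is the natural adaptation to the truncated range $|h|\le\delta$. Both parts are the standard approach: Jensen's inequality against the mollifier probability density combined with Fubini and translation invariance for the first claim, and the product-rule splitting $D_i(u_k)=\eta_k(\cdot+he_i)D_i(u)+u\,D_i(\eta_k)$ together with the convexity inequality $|a+b|^p\le 2^{p-1}(|a|^p+|b|^p)$, the bound $0\le\eta_k\le1$, and the Lipschitz estimate $|D_i(\eta_k)|\le(2/k)|h|$ for the second. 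The constant bookkeeping $2^{p-1}\cdot 2^p\cdot 2=2^{2p}$ is verified, and the reduction to the one-dimensional integral $\int_{|h|\le\delta}|h|^{p(1-s)-1}\,dh=2\delta^{p(1-s)}/(p(1-s))$ is exactly where the truncation enters (in the untruncated case one must split $|h|\le1$ and $|h|>1$, giving a different constant). No gap.
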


The following theorems are the main important in this section.
\begin{thm}\label{Peridynamics-teo1}
	Let $u\in C^2_c(\R^N)$, $0<s<1,$ and $\delta >0$. Then
	\begin{equation}\label{eq1p}
	\lim_{\delta\to0}\frac{1}{\delta^{p(1-s)}}[u]_{s,p,\delta}^i=\frac{2}{p(1-s)}\|\partial_i u\|_p.
	\end{equation}
\end{thm}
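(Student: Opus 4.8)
The plan is to reduce \eqref{eq1p} to a single application of the dominated convergence theorem after rescaling the inner variable. First I would substitute $h=\delta\tau$, $\tau\in(-1,1)$, in \eqref{seminorm-i}; using $dh=\delta\,d\tau$ together with the algebraic identity $sp+p(1-s)=p$ one gets
$$
\frac{1}{\delta^{p(1-s)}}[u]_{s,p,\delta}^i=\int_{\R^N}\int_{-1}^{1}\frac{1}{|\tau|^{1+sp}}\left|\frac{u(x+\delta\tau e_i)-u(x)}{\delta}\right|^p\,d\tau\,dx .
$$
This is the key reformulation: the $\delta$-dependence now sits only inside a difference quotient, and the rest is routine bookkeeping.

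Next I would identify the pointwise limit of the integrand. For fixed $x$ and fixed $\tau\neq 0$, by the very definition of the partial derivative $\frac{u(x+\delta\tau e_i)-u(x)}{\delta}\to\tau\,\partial_i u(x)$ as $\delta\to0$, so the integrand converges a.e.\ on $(-1,1)\times\R^N$ to $|\tau|^{p(1-s)-1}|\partial_i u(x)|^p$. For the domination, since $u\in C^2_c(\R^N)$ (in particular $u\in C^1_c$), the mean value theorem along the segment joining $x$ and $x+\delta\tau e_i$ gives $|u(x+\delta\tau e_i)-u(x)|\le|\delta\tau|\,\|\partial_i u\|_\infty$, whence
$$
\frac{1}{|\tau|^{1+sp}}\left|\frac{u(x+\delta\tau e_i)-u(x)}{\delta}\right|^p\le |\tau|^{p(1-s)-1}\|\partial_i u\|_\infty^p .
$$
Moreover the left-hand side vanishes unless $x$ lies in the fixed compact set $K:=\{x\in\R^N:\operatorname{dist}(x,\supp u)\le\delta_0\}$, provided $0<\delta\le\delta_0$. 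Since $p>1$ and $s<1$ force $p(1-s)-1>-1$, the function $(\tau,x)\mapsto|\tau|^{p(1-s)-1}\|\partial_i u\|_\infty^p\,\chi_{K}(x)$ is integrable on $(-1,1)\times\R^N$ and dominates the integrand uniformly for $\delta\in(0,\delta_0]$.

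Finally, dominated convergence yields
$$
\lim_{\delta\to0}\frac{1}{\delta^{p(1-s)}}[u]_{s,p,\delta}^i=\int_{\R^N}\int_{-1}^{1}|\tau|^{p(1-s)-1}|\partial_i u(x)|^p\,d\tau\,dx=\Big(2\int_0^1\tau^{p(1-s)-1}\,d\tau\Big)\,\|\partial_i u\|_p^p=\frac{2}{p(1-s)}\|\partial_i u\|_p^p ,
$$
which is the content of \eqref{eq1p} (the right-hand side being understood with the $p$-th power). There is no genuine obstacle here: the only points needing care are the integrability of the dominating function near $\tau=0$ — which is exactly where the hypothesis $s<1$, i.e.\ $p(1-s)>0$, is used — and the compact support of $u$, which is what makes the $x$-integral of the dominating function finite.
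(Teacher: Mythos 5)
Your proof is correct, and it takes a genuinely different and arguably cleaner route than the paper. The paper's argument starts from the pointwise Taylor estimate
$$\left|\frac{|D_i(u)(x)|^p}{|h|^p}-|\partial_i u(x)|^p\right|\le C|h|,$$
which uses the full $C^2$ regularity of $u$; integrating this in $h$ over $|h|<\delta$ yields a quantitative rate of convergence of the inner integral (an $O(\delta)$ error after division by $\delta^{p(1-s)}$), and the paper then passes to the limit in the outer $x$-integral by dominated convergence, using compact support. Your version instead rescales $h=\delta\tau$, pushes all the $\delta$-dependence into the difference quotient, and applies dominated convergence once, jointly on $(-1,1)\times\R^N$, with a single explicit dominating function $|\tau|^{p(1-s)-1}\|\partial_i u\|_\infty^p\chi_K(x)$. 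This is tighter bookkeeping and, as you implicitly observe, only requires $u\in C^1_c(\R^N)$ rather than $C^2_c(\R^N)$. What the paper's approach buys in exchange is the explicit error estimate $O(\delta)$ that makes the convergence \emph{uniform} on bounded subsets of $C^2$; that uniformity is precisely what Remark~\ref{rem1} records and what the proof of Theorem~\ref{Peridynamics-teo2} (the $\Gamma$-liminf inequality) leans on. So your proof is a valid substitute for Theorem~\ref{Peridynamics-teo1} itself, but if you adopt it you would need to supply the uniformity in Remark~\ref{rem1} by a separate (short) argument, e.g.\ by carrying a uniform dominating function or redoing the Taylor estimate. Finally, you are right that the displayed right-hand side should read $\|\partial_i u\|_p^p$, not $\|\partial_i u\|_p$; this is a typo in the statement.
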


\begin{proof}
	From the well-known estimate for smooth functions
	\begin{equation}\label{c2estimate}
	\left|\frac{|D_i(u)(x)|^p}{|h|^p}-|\partial_i u|^p\right|\leq C|h|,
	\end{equation}
	where $C$ depends of the smoothness of $u$, we get that
	\begin{align*}
\int_{|h|<\delta}\left|\frac{|D_i(u)(x)|^p}{|h|^{1+sp}}-\frac{|\partial_i u|^p}{|h|^{1+p(s-1)}}\right|\,dh\leq \frac{2C\delta^{p(1-s)+1}}{p(1-s)+1}.
	\end{align*}
	Hence, when $\delta \to 0$ we conclude that
	\begin{align*}
	\lim_{\delta\to 0}\int_{|h|<\delta}\frac{|D_i(u)(x)|^p}{|h|^{1+sp}} \,dh=\lim_{\delta\to 0}\int_{|h|<\delta}\frac{|\partial_i u|^p}{|h|^{1+p(s-1)}}\,dh=\lim_{\delta\to 0}2|\partial_i u|^p\frac{\delta^{p(1-s)}}{p(1-s)}.
	\end{align*}
	Therefore
	\begin{align*}
	\lim_{\delta\to 0}\frac{1}{\delta^{p(1-s)}}\int_{|h|<\delta}\frac{|D_i(u)(x)|^p}{|h|^{1+sp}} \,dh=\frac{2|\partial_i u|^p}{p(1-s)}.
	\end{align*}
To end the proof we need to find an integrable majorant for
$$
F_\delta(x)=\int_{|h|<\delta}\frac{|D_i(u)(x)|^p}{|h|^{1+sp}} \,dh.
$$
Then since $u\in  C^2_0(\R^N) $, there exists $R>\delta>0$ such that $\supp(u)\subset Q_R$, where $Q_R=[-R,R]^n$. If $|x_i|\leq 2R$
	\begin{equation}
|F_\delta(x)|=\int_{|h|<\delta}\frac{|u(x+he_i)-u(x)|^p}{|h|^{1+sp}}\,dh\leq \|\partial_{i}u||_{\infty}^p \frac{\delta^{p(1-s)}}{p(1-s)},
	\end{equation}
	$$
	\left|\frac{1}{\delta^{p(s-1)}}F_\delta(x)\right|\leq C \chi_{Q_{2R}}(x),
	$$
	where  C is a constant depending of $p,R$ and $s$. Finally if $|x_i|> 2R$, $F_\delta(x)=0$.
Hence by Lebesgue's Dominated Convergence Theorem
\begin{equation}
\lim_{\delta\to0}\frac{1}{\delta^{p(1-s)}}[u]_{s,p,\delta}^i=\frac{2}{p(1-s)}\|\partial_i u\|_p
\end{equation}
for all $u \in C^2_c(\R^N).$
\end{proof}

\begin{rem}\label{rem1}
	From equation \eqref{c2estimate} the convergence in \eqref{eq1} is uniform on bounded subset of $C^2(\overline{\Omega}_\varepsilon)$.
\end{rem}

\begin{thm}\label{peridynamics-teo3}
	Let $u\in L^p(\R^N)$, $0<s<1,$ and $\delta >0$. Then
	$$
	\lim_{\delta\to0}\frac{1}{\delta^{p(1-s)}}[u]_{s,p,\delta}^i=\frac{2}{p(1-s)}\|\partial_i u\|_p.
	$$
\end{thm}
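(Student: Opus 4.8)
The plan is to prove the two matching bounds
$$\limsup_{\delta\to0}\frac{1}{\delta^{p(1-s)}}[u]_{s,p,\delta}^i\le \frac{2}{p(1-s)}\|\partial_i u\|_p^p,\qquad \liminf_{\delta\to0}\frac{1}{\delta^{p(1-s)}}[u]_{s,p,\delta}^i\ge \frac{2}{p(1-s)}\|\partial_i u\|_p^p,$$
reading both as statements in $[0,+\infty]$, the limit being $+\infty$ exactly when $\partial_i u\notin L^p(\R^N)$, i.e.\ $u\notin W^{1,p}_i(\R^N)$. The one structural fact used repeatedly is that $u\mapsto([u]_{s,p,\delta}^i)^{1/p}$ is a seminorm: it is the $L^p$ norm of $(x,h)\mapsto u(x+he_i)-u(x)$ against the measure $|h|^{-(1+sp)}\,dh\,dx$ on $\R^N\times\{|h|\le\delta\}$, hence obeys the triangle inequality.

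\emph{Upper bound.} Suppose $u\in W^{1,p}_i(\R^N)$. Mollifying and then cutting off shows $C^2_c(\R^N)$ is dense in $W^{1,p}_i(\R^N)$ (with respect to $\|\cdot\|_p+\|\partial_i\cdot\|_p$), so choose $u_n\to u$ there. The triangle inequality gives $([u]_{s,p,\delta}^i)^{1/p}\le ([u_n]_{s,p,\delta}^i)^{1/p}+([u-u_n]_{s,p,\delta}^i)^{1/p}$. Dividing by $\delta^{1-s}$ and letting $\delta\to0$, Theorem \ref{Peridynamics-teo1} identifies the $\limsup$ of the first term as $(\tfrac{2}{p(1-s)})^{1/p}\|\partial_i u_n\|_p$, while Lemma \ref{lemma1} bounds the second term by $(\tfrac{2}{p(1-s)})^{1/p}\|\partial_i(u-u_n)\|_p$ uniformly in $\delta$. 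Letting $n\to\infty$ yields the upper bound.

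\emph{Lower bound.} After the substitution $h=\delta r$ one has
$$\frac{1}{\delta^{p(1-s)}}[u]_{s,p,\delta}^i=\int_{\R^N}\left(\int_{-1}^{1}\frac{|u(x+\delta re_i)-u(x)|^p}{|\delta r|^p}\,|r|^{p(1-s)-1}\,dr\right)dx.$$
For a.e.\ $x$ the slice $t\mapsto u(x+te_i)$ has an absolutely continuous representative with derivative $\partial_i u(x+te_i)$, hence $\tfrac{1}{\tau}(u(x+\tau e_i)-u(x))\to\partial_i u(x)$ as $\tau\to0$ for a.e.\ $x$; consequently, for a.e.\ $x$ and each fixed $r\ne0$, the inner integrand converges to $|\partial_i u(x)|^p\,|r|^{p(1-s)-1}$. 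Applying Fatou's lemma first in $r$ and then in $x$ gives $\liminf_{\delta\to0}\frac{1}{\delta^{p(1-s)}}[u]_{s,p,\delta}^i\ge\frac{2}{p(1-s)}\|\partial_i u\|_p^p$, and this argument is valid for every $u\in L^p(\R^N)$.

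\emph{Conclusion and the general case.} For $u\in W^{1,p}_i(\R^N)$ the two bounds give the claimed identity. If $u\in L^p(\R^N)\setminus W^{1,p}_i(\R^N)$, suppose towards a contradiction that $L:=\liminf_{\delta\to0}\frac{1}{\delta^{p(1-s)}}[u]_{s,p,\delta}^i<\infty$. Note $u_\varepsilon:=\rho_\varepsilon*u\in W^{1,p}_i(\R^N)$ since $\partial_i u_\varepsilon=(\partial_i\rho_\varepsilon)*u\in L^p(\R^N)$; apply the lower bound to $u_\varepsilon$ together with $[u_\varepsilon]_{s,p,\delta}^i\le[u]_{s,p,\delta}^i$ from Lemma \ref{regular} to get $\frac{2}{p(1-s)}\|\partial_i u_\varepsilon\|_p^p\le L$ for every $\varepsilon>0$. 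Since $\partial_i u_\varepsilon\to\partial_i u$ in $\mathcal D'(\R^N)$ and bounded sets of $L^p(\R^N)$ are weakly precompact ($p>1$), it follows that $\partial_i u\in L^p(\R^N)$, contradicting $u\notin W^{1,p}_i(\R^N)$. Hence the limit is $+\infty$ and the equality holds in $[0,+\infty]$. The two points needing care are the a.e.\ one-dimensional differentiability invoked in the lower bound (a Fubini reduction to Lebesgue's differentiation theorem along lines parallel to $e_i$) and the density of $C^2_c(\R^N)$ in the one-directional space $W^{1,p}_i(\R^N)$; the remaining steps are routine applications of Lemmas \ref{lemma1} and \ref{regular} and Theorem \ref{Peridynamics-teo1}.
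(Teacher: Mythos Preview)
Your proof is correct, with one overstatement that you should fix: the Fatou lower-bound step presupposes that for a.e.\ $x$ the slice $t\mapsto u(x+te_i)$ is absolutely continuous with derivative $\partial_i u$, and this is the ACL characterization of $W^{1,p}_i(\R^N)$, not a property of a general $u\in L^p(\R^N)$. So the sentence ``this argument is valid for every $u\in L^p(\R^N)$'' is false as written. Fortunately your proof never actually uses the lower bound outside $W^{1,p}_i$: in the concluding paragraph you only apply it to $u_\varepsilon=\rho_\varepsilon*u$, which does lie in $W^{1,p}_i$. Simply restrict the lower-bound claim to $u\in W^{1,p}_i(\R^N)$ and the argument stands.

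Your route differs from the paper's in two respects. For $u\in W^{1,p}_i(\R^N)$ the paper does not separate upper and lower bounds: it controls the full quantity $\big|\,\delta^{-p(1-s)}[u]_{s,p,\delta}^i-\tfrac{2}{p(1-s)}\|\partial_i u\|_p^p\,\big|$ by a single three-term estimate, using the triangle inequality for the seminorm together with Lemma~\ref{lemma1} and Theorem~\ref{Peridynamics-teo1}. Your upper bound is exactly that estimate, but for the lower bound you substitute $h=\delta r$ and apply Fatou plus Lebesgue differentiation on lines; this is a genuinely different, more intrinsic argument that does not need the $C^2_c$ approximation on the lower side. Second, in the case $\liminf<\infty$ the paper regularizes by truncation \emph{and} mollification, $u_{k,\varepsilon}=\rho_\varepsilon*(u\eta_k)$, invoking both halves of Lemma~\ref{regular} so that Theorem~\ref{Peridynamics-teo1} (stated for $C^2_c$) applies directly; you mollify only and feed $u_\varepsilon$ into your already-established $W^{1,p}_i$ lower bound, which lets you drop the truncation entirely. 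The paper's version is a bit more streamlined for the $W^{1,p}_i$ case; yours is cleaner in the general $L^p$ case.
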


\begin{proof}
	 On the one hand, let $u \in W^{1,p}_i(\R^N)$, $s \in (0,1),$ $p \in (1,\infty),$ and $\delta>0$ . Take a sequence $\{u_k\}_{k\in \N} \subset C^2_0(\R^N)$ such that $u_k\to u$ in $L^p(\R^N)$ and $\partial_i u_k\to \partial_i u$ in $L^p(\R^N)$. Then

	\begin{align*}
		\left| \frac{1}{\delta^{p(1-s)}}[u]_{s,p,\delta}^i - \frac{2}{p(1-s)}\|\partial_i u\|_{p}\right| & \leq
    \delta^{(s-1)} \left| [u]_{s,p,\delta}^i - [u_k]_{s,p,\delta}^i\right|\\
		& + \left| \frac{1}{\delta^{p(1-s)}}[u_k]_{s,p,\delta}^i-\frac{2}{p(1-s)}\|\partial_i u_k\|_{p}\right|\\
		& + \frac{2}{p(1-s)} \left|\|(u_k)_{x_i}\|_{p} - \|u_{x_i}\|_{p}\right|\\
		&= I  + II  + III.
	\end{align*}
	
	Note that $[\cdot ]_{s,p,\delta}^i$ is a seminorm, which means it satisfies the triangle inequality. Therefore by Lemma \ref{lemma1}, we have
	$$
	I\leq\delta^{(s-1)}[u_k-u]_{s,p,\delta}^i\leq C(p,s)\|\partial_i u_k -\partial_i u\|_p,
	$$
	which converges to zero if $k\to \infty$, uniformly in $\delta$. The third term is bounded similarly, and the second term follows as a consequence of Theorem \ref{Peridynamics-teo1}.
	
	 On the other hand, given $u \in L^{p}(\R^N)$ and assume that
	\begin{equation}\label{liminf1}
		\liminf_{\delta \to 0}\frac{1}{\delta^{p(1-s)}}[u]_{s,p,\delta}^i < \infty.
	\end{equation}
	
	Let $u_{k,\varepsilon}\subset C^\infty_0(\R^N)$ be given by
	$$
	u_{k,\epsilon}=\rho_\varepsilon*(u\eta_k),
	$$
	and  applying Lemma \ref{regular} together with \eqref{liminf1}, we obtain

	$$
	\liminf_{\delta \to 0}\frac{1}{\delta^{p(1-s)}}[u_{k,\varepsilon}]_{s,p,\delta}^i < C,
	$$
	where $C$ is independent from $k$ and $\varepsilon>0$.
	
	Next, for any fixed $k$ and $\epsilon$, we apply Theorem \ref{Peridynamics-teo1} and obtain
	\begin{equation}
		C\geq 	\lim_{\delta \to 0}\frac{1}{\delta^{p(1-s)}}[u_{k,\varepsilon}]_{s,p,\delta}^i  = \frac{2}{p(1-s)}\|\partial_i u_{k,\varepsilon}\|_p.\nonumber
	\end{equation}
	
	Hence, there exist a subsequence $u_j=\{u_{k_j,\epsilon_j}\}\subset \{u_{k,\varepsilon}\}$ such that $\partial_{i} u_j \rightharpoonup \partial_{i} u$ weakly in $L^p(\R^N)$.
	Thus $u \in W_i^{1,p}(\R^N)$. This completes the proof of the Theorem.
\end{proof}

Finally, we are interested in proving the sequence case.
\begin{thm}\label{Peridynamics-teo2}
	Let $\delta_k\to0$ and $\{u_k\}_{k\in \N}\subset L^p(\R^N)$ is a sequence such that
	$$ \sup_k\|u_k\|_p< \infty,\quad u_k\to u \text{ in }L_{loc}^p(\R^N) \text{ and } \sup_{k}[u_k]_{s,p,\delta_k}<\infty.$$
	Then
	$$
	\frac{2}{p(1-s)}\|\partial_i u\|_p\leq \liminf_{\delta_k\to0}\frac{1}{\delta_k^{p(1-s)}}[u_k]_{s,p,\delta_k}^i.
	$$
\end{thm}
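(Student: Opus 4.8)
The plan is to obtain the sharp lower bound by combining a localization in the $x$-variable with mollification, reducing matters to the smooth estimate of Theorem \ref{Peridynamics-teo1}, and then recovering $\partial_i u$ by a weak compactness argument. First I would dispose of the trivial case: if $L:=\liminf_{\delta_k\to 0}\delta_k^{-p(1-s)}[u_k]_{s,p,\delta_k}^i=+\infty$ there is nothing to prove, so I may assume $L<\infty$ and, after passing to a subsequence (not relabelled), that $\delta_k^{-p(1-s)}[u_k]_{s,p,\delta_k}^i\to L$ and $\delta_k<1$ for all $k$. Since $\sup_k\|u_k\|_p<\infty$ and $u_k\to u$ in $L^p_{\mathrm{loc}}(\R^N)$, Fatou's lemma gives $u\in L^p(\R^N)$. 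Now fix $R>0$ and $\varepsilon>0$, let $\rho_\varepsilon\in C^\infty_c(\R^N)$ be the standard mollifier with $\supp\rho_\varepsilon\subseteq\overline{B_\varepsilon}$, and set $u_{k,\varepsilon}=\rho_\varepsilon*u_k$ and $u_\varepsilon=\rho_\varepsilon*u$.

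\emph{Step 1 (localized mollification estimate).} For each fixed $h$, Jensen's inequality applied to the convolution gives $|D_i(u_{k,\varepsilon})(x)|^p\le(\rho_\varepsilon*|D_i(u_k)|^p)(x)$. Multiplying by $|h|^{-1-sp}$, integrating $\int_{B_R}dx\int_{|h|\le\delta_k}dh$, applying Tonelli's theorem, performing the translation $y=x-z$ (under which $B_R-z\subseteq B_{R+\varepsilon}$ when $z\in\supp\rho_\varepsilon$), and using the nonnegativity of the integrand, I would obtain the localized analogue of the mollification part of Lemma \ref{regular}:
$$
\int_{B_R}\int_{|h|\le\delta_k}\frac{|D_i(u_{k,\varepsilon})(x)|^p}{|h|^{1+sp}}\,dh\,dx\ \le\ [u_k]_{s,p,\delta_k}^i .
$$
\emph{Step 2 (limit $k\to\infty$ with $R,\varepsilon$ fixed).} Since $u_k\to u$ in $L^1_{\mathrm{loc}}(\R^N)$ and $\rho_\varepsilon$ is fixed, $D^\alpha u_{k,\varepsilon}=(D^\alpha\rho_\varepsilon)*u_k\to D^\alpha u_\varepsilon$ uniformly on $\overline{B_{R+1}}$ for every $|\alpha|\le 2$; in particular $M:=\sup_k\|u_{k,\varepsilon}\|_{C^2(\overline{B_{R+1}})}<\infty$. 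Applying the pointwise bound \eqref{c2estimate} to each $u_{k,\varepsilon}$ on $\overline{B_{R+1}}$ (legitimate since $|h|\le\delta_k<1$) with a constant $C=C(M)$ independent of $k$, integrating in $h$, using $\int_{|h|\le\delta_k}|h|^{-1-p(s-1)}\,dh=\tfrac{2}{p(1-s)}\delta_k^{p(1-s)}$, then dividing by $\delta_k^{p(1-s)}$ and integrating over $B_R$, I get
$$
\left|\frac{1}{\delta_k^{p(1-s)}}\int_{B_R}\int_{|h|\le\delta_k}\frac{|D_i(u_{k,\varepsilon})(x)|^p}{|h|^{1+sp}}\,dh\,dx-\frac{2}{p(1-s)}\int_{B_R}|\partial_i u_{k,\varepsilon}|^p\,dx\right|\le\frac{2C|B_R|}{p(1-s)+1}\,\delta_k,
$$
whose right-hand side tends to $0$ as $k\to\infty$. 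Since $\partial_i u_{k,\varepsilon}\to\partial_i u_\varepsilon$ uniformly on $\overline{B_R}$, the subtracted term converges to $\tfrac{2}{p(1-s)}\int_{B_R}|\partial_i u_\varepsilon|^p\,dx$; combining this with Step 1 yields $\tfrac{2}{p(1-s)}\int_{B_R}|\partial_i u_\varepsilon|^p\,dx\le L$.

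\emph{Step 3 (send $R\to\infty$, then $\varepsilon\to 0$).} Letting $R\to\infty$ gives $\tfrac{2}{p(1-s)}\|\partial_i u_\varepsilon\|_p^p\le L$ for every $\varepsilon>0$, so $\{\partial_i u_\varepsilon\}_{\varepsilon>0}$ is bounded in $L^p(\R^N)$. As $p>1$, along a sequence $\varepsilon_j\to 0$ we have $\partial_i u_{\varepsilon_j}\rightharpoonup g$ weakly in $L^p(\R^N)$; since $u_{\varepsilon_j}\to u$ in $L^1_{\mathrm{loc}}$ forces $\partial_i u_{\varepsilon_j}\to\partial_i u$ in $\mathcal{D}'(\R^N)$, necessarily $g=\partial_i u$, so that $u\in W^{1,p}_i(\R^N)$. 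By weak lower semicontinuity of the $L^p$-norm, $\|\partial_i u\|_p\le\liminf_j\|\partial_i u_{\varepsilon_j}\|_p$, and therefore
$$
\frac{2}{p(1-s)}\|\partial_i u\|_p^p\le\liminf_{j\to\infty}\frac{2}{p(1-s)}\|\partial_i u_{\varepsilon_j}\|_p^p\le L=\liminf_{\delta_k\to 0}\frac{1}{\delta_k^{p(1-s)}}[u_k]_{s,p,\delta_k}^i,
$$
which is the claim.

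I expect the main obstacle to be Step 2: one must ensure that the constant $C$ in the smooth estimate \eqref{c2estimate} is uniform in $k$, which is precisely where the local convergence $u_k\to u$ is exploited, via the uniform $C^2$-bound $M$ on $\overline{B_{R+1}}$ for the mollified family; one also has to track carefully the order of the three limits $k\to\infty$, $R\to\infty$, $\varepsilon\to 0$, since the auxiliary constants depend on $R$ and $\varepsilon$ but never on $k$. The alternative of truncating $u_k$ by cut-offs and invoking Lemma \ref{regular} directly is less convenient here, since the factor $2^{p-1}$ appearing in that lemma would spoil the sharp constant unless it is first replaced by a factor arbitrarily close to $1$ at the expense of a lower-order error term.
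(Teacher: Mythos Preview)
Your proof is correct and follows essentially the same approach as the paper: mollify $u_k$, localize to $B_R$, exploit the uniform $C^2$-bound on the mollified sequence to apply the smooth estimate \eqref{c2estimate} (the paper packages this as Remark \ref{rem1}, citing Ponce's method), and then let $R\to\infty$ and $\varepsilon\to 0$. Your Step 3 is somewhat more explicit than the paper's---you extract $\partial_i u\in L^p$ via weak compactness and then invoke weak lower semicontinuity, whereas the paper simply writes ``Taking $\varepsilon\to 0$ and $R\to\infty$, we get the desired result''---but the underlying argument is the same.
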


\begin{proof}
Based on Remark \ref{rem1}, we can apply Ponce's method \cite{Ponce}. Let $\varepsilon>0$ and $R>0$ be fixed. From Lemma \ref{regular} we get that $$[u_k]_{s,p,\delta_k}^i\geq[u_{k,\varepsilon}]_{s,p,\delta_k}^i\geq \int_{B_R(0)}\int_{|h|\leq \delta_k}\frac{|u_{k,\varepsilon}(x+he_i)-u_{k,\varepsilon}(x)|^p}{|h|^{1+sp}}\,dh\,dx.
 $$
 For each $\varepsilon$ fixed we have that $u_{k,\varepsilon}\to u_\varepsilon$ in $C^2(B_R(0))$. From Remark \ref{rem1} we get that
 \begin{align*}
 \frac{1}{\delta_k^{p(1-s)}}\int_{B_R(0)}\int_{|h|\leq \delta_k}\frac{|u_{k,\varepsilon}(x+he_i)-u_{k,\varepsilon}(x)|^p}{|h|^{1+sp}}\,dh\,dx\to\frac{2}{p(1-s)}\int_{B_R(0)}|\partial_i u_\varepsilon|^p\,dx
 \end{align*}
 as $k\to\infty$.
Therefore,
$$
\liminf_{\delta_k\to0}\frac{1}{\delta_k^{p(1-s)}}[u_k]_{s,p,\delta_k}^i\geq \frac{2}{p(1-s)}\int_{B_R(0)} |(\partial_i u)_\varepsilon|^p\,dx.
$$
Taking $\varepsilon\to0$ and $R\to\infty$, we get the desired result.
\end{proof}

\subsection{Applications}
In most applications, what turns out to be more useful than the norm or semi-norms are the energy functionals
$$J_{\delta}^i(u)\colon= \frac{1}{\delta^{p(1-s)}}[u]_{s,p,\delta}^i \text{ and } J^i(u)\colon= \frac{2}{p(1-s)}\|u_{x_i}\|_p^p.$$
Observe that we avoid using the notation $ J_{s,p,\delta}^i(u) $ because it would be cumbersome. However, the reader should understand that the functional also depends on $p$ and $s$.

Theorems \ref{peridynamics-teo3} and \ref{Peridynamics-teo2} immediately imply
$$
J_{\delta}^i(u)\overset{\Gamma}{\longrightarrow}  J_{0}^i(u)
$$
as $\delta\to 0$ in $L^p(\Omega)$ for every $\Omega\subset \R^N$ bounded. Just observe that Theorem \ref{Peridynamics-teo2} is exactly the lim-inf inequality and for the
lim sup inequality one just has to take the constant sequence $u_k = u$ as a recovery
sequence and apply Theorem \ref{peridynamics-teo3}.

Now, given $\vec{p}=(p_1,\dots,p_n)$ and $\vec{s}=(s_1,\dots,s_n)$, $s_i \in (0,1)$ and $\vec{\delta}=(\delta_1,\dots,\delta_n)$  We define the functional
$$
J_{\vec{\delta}}(u) = \sum_{i=1}^n J_{\delta_i}^i(u).
$$
In the context of the application, establishing the Gamma-convergence of the entire functional is crucial. The following corollary ensures this.

\begin{cor}\label{gammaperidynamic}
Let $\vec{p}=(p_1,\dots,p_n)$ and $\vec{s}=(s_1,\dots,s_n)$ with $p_i\in (1,\infty)$ and $s_i \in (0,1)$ for every $i=1,\dots, n$. Let $\delta_i^k\to 0$ as $k\to\infty$ for $i=1,\dots,j$ and let $\overline{\delta}_k=(\delta_1^k,\dots,\delta_j^k,\delta_{j+1},\dots,\delta_n)$. Let $\vec{\delta}_0 = (\underbrace{0,\dots,0}_{j}, \delta_{j+1},\dots,\delta_n)$. Then $J_{\vec{\delta}_k}$ Gamma converges to $J_{\vec{\delta}_0}$ in  $L^{p_\text{max}}(\Omega)$ for any $\Omega\subset\R^n$ bounded.
\end{cor}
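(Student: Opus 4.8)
The plan is to use the additive structure $J_{\vec{\delta}_k}(u)=\sum_{i=1}^{j}J^i_{\delta_i^k}(u)+\sum_{i=j+1}^{n}J^i_{\delta_i}(u)$ and reduce everything to the one-directional results already established. Fix a bounded open set $\Omega\subset\R^n$. Two facts are at our disposal. First, for each $i\le j$ the paragraph preceding the statement (which combines Theorems \ref{peridynamics-teo3} and \ref{Peridynamics-teo2}) gives $J^i_{\delta_i^k}\overset{\Gamma}{\longrightarrow}J^i_0$ in $L^{p_{\text{max}}}(\Omega)$ as $k\to\infty$, with the convention $J^i_0(u)=+\infty$ when $\partial_i u\notin L^{p_i}$. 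Second, for each $i>j$ the functional $J^i_{\delta_i}$ is independent of $k$ and is sequentially lower semicontinuous for the $L^{p_{\text{max}}}(\Omega)$–topology: if $u_k\to u$ in $L^{p_{\text{max}}}(\Omega)$ then $u_k\to u$ in $L^{p_i}_{\text{loc}}(\R^n)$ (since $p_i\le p_{\text{max}}$ and $\Omega$ is bounded), and Fatou's lemma applied along a subsequence converging a.e. on $\R^n$ (extracted diagonally) yields $\liminf_k J^i_{\delta_i}(u_k)\ge J^i_{\delta_i}(u)$.

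For the $\Gamma$-$\liminf$ inequality, take $u_k\to u$ in $L^{p_{\text{max}}}(\Omega)$; we may assume $\liminf_k J_{\vec{\delta}_k}(u_k)<\infty$ and pass to a subsequence attaining it. Nonnegativity of the summands forces each $J^i_{\delta_i^k}(u_k)$ to stay bounded, and for $i\le j$ this gives $[u_k]^i_{s_i,p_i,\delta_i^k}\le C(\delta_i^k)^{p_i(1-s_i)}\to 0$; hence the hypotheses of Theorem \ref{Peridynamics-teo2} are met in the $i$-th direction and $\liminf_k J^i_{\delta_i^k}(u_k)\ge J^i_0(u)$, while for $i>j$ the lower semicontinuity recorded above applies. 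Since $\liminf$ is superadditive over a finite sum,
$$
\liminf_{k\to\infty}J_{\vec{\delta}_k}(u_k)\ge\sum_{i=1}^{j}\liminf_{k\to\infty}J^i_{\delta_i^k}(u_k)+\sum_{i=j+1}^{n}\liminf_{k\to\infty}J^i_{\delta_i}(u_k)\ge\sum_{i=1}^{j}J^i_0(u)+\sum_{i=j+1}^{n}J^i_{\delta_i}(u)=J_{\vec{\delta}_0}(u).
$$

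For the $\Gamma$-$\limsup$ inequality it suffices to produce a single recovery sequence, and the constant sequence $u_k\equiv u$ does the job: if $J_{\vec{\delta}_0}(u)=+\infty$ there is nothing to prove, and otherwise $\partial_i u\in L^{p_i}$ for all $i\le j$, so Theorem \ref{peridynamics-teo3} gives $J^i_{\delta_i^k}(u)\to J^i_0(u)$ for those $i$ while $J^i_{\delta_i}(u)$ is constant in $k$ for $i>j$; adding these finitely many limits gives $\lim_k J_{\vec{\delta}_k}(u)=J_{\vec{\delta}_0}(u)$, so the constant sequence is a recovery sequence. Combined with the previous paragraph, this proves the corollary.

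I expect the only delicate point to be the $\Gamma$-$\liminf$ step, and more precisely the matching of the hypotheses of Theorem \ref{Peridynamics-teo2} (which is phrased in terms of $L^p_{\text{loc}}(\R^n)$–convergence together with a global bound $\sup_k\|u_k\|_{p}<\infty$) to the $L^{p_{\text{max}}}(\Omega)$–topology used in the statement of the corollary; once the finiteness of $\liminf_k J_{\vec{\delta}_k}(u_k)$ is turned into the boundedness of $\sup_k\|u_k\|_{p_i}$ and the vanishing of $\sup_k[u_k]^i_{s_i,p_i,\delta_i^k}$ in each direction $i\le j$, the rest is just the additive bookkeeping above together with the elementary observation that the recovery sequence may be taken constant.
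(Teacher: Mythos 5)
Your proof follows essentially the same approach as the paper's: decompose $J_{\vec{\delta}_k}$ additively, use the one-directional $\Gamma$-liminf (Theorem \ref{Peridynamics-teo2}) for $i\le j$, handle $i>j$ via stability of the fixed $J^i_{\delta_i}$, and take the constant sequence as a common recovery sequence for the limsup. The one point where you are more careful than the paper is worth noting: the paper's proof invokes ``continuity'' of $J^i_{\delta_i}$ for $i>j$, which is not true for the $L^{p_{\text{max}}}(\Omega)$ topology; what is actually available and sufficient for the liminf step is sequential lower semicontinuity (via Fatou, as you argue), and for the limsup step nothing at all is needed for those directions because the constant recovery sequence leaves $J^i_{\delta_i}(u_k)=J^i_{\delta_i}(u)$ unchanged — so your version correctly fills a gap in the paper's wording rather than merely expanding it.
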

\begin{proof}
The $\liminf$ inequality for $J_{\vec{\delta_k}}$ follows from the $\liminf$ inequality for every $J^i_{\delta_i^k}$ $(i=1,\dots,j)$ and the continuity of $J^i_{\delta_i}$ for $i=j+1,\dots,n$.

The $\limsup$ inequality follows since the constant sequence $u_k=u$ is a recovery sequence for {\em every} $J^i_{\delta_i^k}$, $i=1,\dots,j$.
\end{proof}

\subsubsection{Peridynamic fractional anisotropic p-laplacian}
In this subsection, we will show the variational nature of the problem by defining the Euler-Lagrange equation for the functionals.

\begin{defn}
Let $\Omega\subset \R^n$ be a bounded open subset. Then we define the anisotropic Sobolev space of functions vanishing at the boundary as
\begin{align*}
W^{\vec{s},\vec{p},\vec{\delta}}_0(\Omega)&=\{u\in W^{\vec{s},\vec{p},\vec{\delta}}(\R^n) \colon u|_{\Omega^c}=0\},\\
W^{s_i,p_i,\delta_i}_{i,0}(\Omega)&=\{u\in W^{s_i,p_i,\delta_i}_{i}(\R^n) \colon u|_{\Omega^c}=0\}.
\end{align*}
The topological dual spaces of $W^{\vec{s},\vec{p},\vec{\delta}}_0(\Omega)$ and $W^{s_i,p_i,\delta_i}_{i,0}(\Omega)$  will be denoted by $W^{-\vec{s},\vec{p}',\vec{\delta}}(\Omega) $ and $W_i^{-s_i,p'_i,\delta_i}(\Omega)$ respectively.
\end{defn}

Now, it is easy to see that the functionals $J^i_{\delta_i}$ are Fréchet differentiable for every $i=1,\dots,n$, every $s\in (0,1]$, $\delta_i>0$ and every $p\in (1,\infty)$. Even more  $(J^i_{\delta_i})'\colon W^{s_i,p_i,\delta_i}_{i,0}(\Omega) \to W^{-s_i,p_i',\delta_i}_i(\Omega)$ is continuous and  is given by
\begin{align*}
\langle (J^i_{\delta_i})'(u),v\rangle=
\frac{p}{\delta^{p(1-s)}}\int_{\R^n}\int_{|h|\leq \delta}\frac{|D_i(u)(x)|^{p-2}D_i(u)(x)D_i(v)(x)}{|h|^{1+sp}}\,dh dx.
\end{align*}

$$
\langle (J^i)'(u), v\rangle= \frac{2}{(1-s)}\int_{\R^n}|\partial_{x_i}u|^{p-2}\partial_{x_i}u \ \partial_{x_i}v \,dx,
$$
where $\langle\cdot, \cdot\rangle$ denotes the duality pairing between $W^{s_i,p_i,\delta}_{i,0}(\Omega)$ and its dual $W^{-s_i,p_i',\delta}_i(\Omega)$.

The operator $$
J_{\vec{\delta}}(u) = \sum_{i=1}^n J_{\delta_i}^i(u).
$$
is Fr\'echet differentiable with derivative given by
$$
$$
$$
J_{\vec{\delta}}'(u) = \sum_{i=1}^n \frac{1}{p_i}(J^i_{\delta_i})'(u).
$$

 Finally, we define the \textit{Peridynamic fractional anisotropic $\vec{p}$-laplacian} as
 $$
 (-\widetilde{\Delta}_{\vec{p}})^{\vec{s}}_{\vec{\delta}} u  := (J_{\vec{\delta}})'(u).
 $$

Therefore, we want to study the equation
\begin{equation}\label{EDP-anisotropica}
\begin{cases}
(-\widetilde{\Delta}_{\vec{p}})^{\vec{s}}_{\vec{\delta}} u = f & \text{in }\Omega,\\
u=0 & \text{in } \R^n\setminus \Omega.
\end{cases}
\end{equation}
We say that $u \in W^{\vec{s},\vec{p}}_0(\Omega)$ is a weak solution of \eqref{EDP-anisotropica} if
$$
\langle(-\widetilde{\Delta}_{\vec{p}})^{\vec{s}}_{\vec{\delta}}u,v\rangle=\int_\Omega f v\,dx
$$
for all $v \in W^{\vec{s},\vec{p},\vec{\delta}}_0(\Omega)$.

For a weak solution to be well defined, as usual, we need to impose some integrability conditions on the source term $f$. Hence, we consider the case where $f=f(x)$ and $f\in L^{(\vec{p}\ ^*)'}(\Omega)$.

The existence and uniqueness of weak solutions to \eqref{EDP-anisotropica} is then a direct consequence of the direct method in the calculus of variations since the solution is the unique minimizer of the functional
$$
I(v) := J_{\vec{\delta}}(v) - \int_\Omega fv\, dx,
$$
which is a strictly convex, coercive, and continuous functional in $W^{\vec{s},\vec{p},\vec{\delta}}_0(\Omega)$.

Let us summarize all of these facts into a single statement.
\begin{prop}\label{existence1}
Let $\Omega\subset \R^n$ be a bounded open set, let $\vec{\delta}=(\delta_1,\dots,\delta_n)$, $\delta_i>0$,  $\vec{s}=(s_1,\dots,s_n)$, $s_i\in (0,1)$, $\vec{p}=(p_1,\dots,p_n)$, $p_i\in (1,\infty)$ and  $\vec{p}\ ^*$ the critical Sobolev exponent defined in the beginning of this section. Assume that $p_\text{max}<\vec{p}\ ^*$ and $\overline{\vec{s}\vec{p}}<n$.

Then, for every $f\in L^{(\vec{p}\ ^*)'}(\Omega)$, there exists a unique weak solution $u\in W^{\vec{s},\vec{p},\vec{\delta}}_0(\Omega)$
of \eqref{EDP-anisotropica}.
\end{prop}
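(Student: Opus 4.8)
\emph{Proof plan.} The approach is the direct method in the calculus of variations applied to the functional
$$
I(v) = J_{\vec{\delta}}(v) - \int_\Omega f v\,dx,\qquad v\in W^{\vec{s},\vec{p},\vec{\delta}}_0(\Omega),
$$
whose unique minimiser will turn out to be the weak solution, the weak formulation of \eqref{EDP-anisotropica} being exactly the Euler--Lagrange equation of $I$. First I would check that $I$ is well defined and finite: extending functions by zero outside $\Omega$, the hypotheses $\overline{\vec{s}\vec{p}}<n$ and $p_\text{max}<\vec{p}\ ^{*}$ give, through the Sobolev embedding recorded in this section, a continuous embedding $W^{\vec{s},\vec{p},\vec{\delta}}_0(\Omega)\hookrightarrow L^{\vec{p}\ ^{*}}(\Omega)$, so that for $f\in L^{(\vec{p}\ ^{*})'}(\Omega)$ Hölder's inequality yields $\big|\int_\Omega f v\,dx\big|\le \|f\|_{(\vec{p}\ ^{*})'}\|v\|_{\vec{p}\ ^{*}}\le C\|f\|_{(\vec{p}\ ^{*})'}\|v\|_{\vec{s},\vec{p},\vec{\delta}}$. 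This same chain of estimates also makes the linear term \emph{weakly} continuous on $W^{\vec{s},\vec{p},\vec{\delta}}_0(\Omega)$, since weak convergence there passes to weak convergence in $L^{\vec{p}\ ^{*}}(\Omega)$.

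The next, and in my view main, step is a Poincaré-type inequality on $W^{\vec{s},\vec{p},\vec{\delta}}_0(\Omega)$: for each $i$ one must produce $C_i=C_i(\Omega,s_i,p_i,\delta_i)$ with $\|v\|_{p_i}^{p_i}\le C_i\,[v]_{s_i,p_i,\delta_i}^{i}$ for every $v$ vanishing outside $\Omega$. I would prove this by a one-dimensional telescoping argument along the direction $e_i$: since $\Omega$ is bounded, $v(\cdot)$ is supported in a slab of finite width in the $x_i$-variable, so for a suitable fixed length $h\in[\delta_i/2,\delta_i]$ one can write $v(x)$ as a sum of finitely many increments $v(x+(j-1)he_i)-v(x+jhe_i)$, and then Jensen's inequality on the sum followed by integration in $x$ gives the bound; here one uses that the weight $|h|^{-1-s_ip_i}$ is bounded above and below on $[\delta_i/2,\delta_i]$, so that $\int_{\R^n}|u(x+he_i)-u(x)|^{p_i}\,dx$ for this particular $h$ is controlled by $[v]_{s_i,p_i,\delta_i}^{i}$. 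The constants' dependence on $\delta_i$ and on the diameter of $\Omega$ needs to be tracked, and this is where the real work lies. As a consequence $\sum_{i=1}^n\big([v]_{s_i,p_i,\delta_i}^{i}\big)^{1/p_i}$ is an equivalent norm on $W^{\vec{s},\vec{p},\vec{\delta}}_0(\Omega)$, and since each $J^i_{\delta_i}$ is positively homogeneous of degree $p_i$ in $v$, combining this with the estimate of the linear term gives $I(v)\ge c\sum_{i=1}^n\big([v]_{s_i,p_i,\delta_i}^{i}\big)-C\|f\|_{(\vec{p}\ ^{*})'}\|v\|_{\vec{s},\vec{p},\vec{\delta}}\to+\infty$ as $\|v\|_{\vec{s},\vec{p},\vec{\delta}}\to\infty$, i.e.\ $I$ is coercive.

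For lower semicontinuity I would argue that each $J^i_{\delta_i}$ is convex, being the integral against a positive measure of $t\mapsto|t|^{p_i}$ composed with the linear map $v\mapsto \big(v(x+he_i)-v(x)\big)$, and strongly continuous (in fact Fréchet differentiable, as already recorded before the statement); a convex, strongly continuous functional on a Banach space is weakly lower semicontinuous, hence so is $J_{\vec{\delta}}$. Then the direct method applies verbatim: a minimising sequence is bounded by coercivity, admits a weakly convergent subsequence by reflexivity of $W^{\vec{s},\vec{p},\vec{\delta}}_0(\Omega)$, and weak lower semicontinuity of $J_{\vec{\delta}}$ together with weak continuity of the linear term shows the weak limit $u$ minimises $I$.

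Uniqueness follows from strict convexity of $J_{\vec{\delta}}$: if the midpoint identity $J_{\vec{\delta}}\big(\tfrac{u+v}{2}\big)=\tfrac12 J_{\vec{\delta}}(u)+\tfrac12 J_{\vec{\delta}}(v)$ holds, strict convexity of $t\mapsto|t|^{p_i}$ ($p_i>1$) forces $u(x+he_i)-u(x)=v(x+he_i)-v(x)$ for a.e.\ $x$ and a.e.\ $|h|\le\delta_i$, for every $i$, whence $[u-v]_{s_i,p_i,\delta_i}^{i}=0$ for all $i$ and the Poincaré inequality above gives $u=v$. Finally, since $I$ is Fréchet differentiable, with derivative the continuous map computed before the statement, the minimiser satisfies $\langle I'(u),v\rangle=0$ for all $v\in W^{\vec{s},\vec{p},\vec{\delta}}_0(\Omega)$, that is $\langle(-\widetilde{\Delta}_{\vec{p}})^{\vec{s}}_{\vec{\delta}}u,v\rangle=\int_\Omega fv\,dx$, which is precisely the weak formulation of \eqref{EDP-anisotropica}. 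The hard part is the Poincaré inequality/coercivity; everything else is routine once the embedding and differentiability statements of this section are available.
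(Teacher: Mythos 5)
Your proof is correct and follows the same route as the paper: the direct method applied to $I(v)=J_{\vec{\delta}}(v)-\int_\Omega fv\,dx$, with strict convexity, coercivity, and (weak lower semi-)continuity producing a unique minimiser whose Euler--Lagrange equation is precisely the weak formulation of \eqref{EDP-anisotropica}. The paper simply asserts these three properties without argument, whereas you correctly single out a Poincar\'e-type inequality on $W^{\vec{s},\vec{p},\vec{\delta}}_0(\Omega)$ as the real technical point behind both coercivity and strict convexity, and your one-dimensional telescoping sketch (using that the kernel is bounded above and below on $[\delta_i/2,\delta_i]$) is a sound way to obtain it.
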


Now, we will analyze the asymptotic behavior of the solution to \eqref{EDP-anisotropica} when (or some of the) $\delta_i\to 0$.

To this end assume that $\delta_1^k,\dots, \delta_j^k\to 0$ (as $k\to\infty$) and $\delta_{j+1},\dots,\delta_n$ remain fixed and consider $\vec{\delta}_k := (\delta_1^k,\dots,\delta_j^k,\delta_{j+1},\dots,\delta_n)$ and the functionals
$$
I_k(v) := J_{\vec{\delta}_k}(v) - \int_\Omega fv\, dx.
$$
By Proposition \ref{existence1}, there exists a unique minimizer $u_k$ of $I_k$ in $W^{\vec{s},\vec{p},\vec{\delta}}_0(\Omega)$. Hence we want to study the behavior of the sequence $\{u_k\}_{k\in\N}$ as $k\to\infty$.

To this end, recall that $\vec{\delta}_0 := (\underbrace{0,\dots,0}_{j},\delta_{j+1},\dots,\delta_n)$ and
$$
I_0(v) := J_{\vec{\delta}_0}(v) - \int_\Omega fv\, dx,
$$
where
$$
J_{\vec{\delta}_0}(v) := \sum_{i=1}^j J^i(v) + \sum_{i=j+1}^n J^i_{\delta_i}(v).
$$

Next, we make full use of the BBM results to conclude the following theorem:
\begin{thm}
With the preceding notation, the functionals $\bar I_k\colon L^{p_\text{min}}(\Omega)\to \bar\R$ defined as
$$
\bar I_k(v) =\begin{cases}
I_k(v) & \text{if } v\in W^{\vec{s},\vec{p},\vec{\delta}}_0(\Omega),\\
\infty & \text{elsewhere}.
\end{cases}
$$
Gamma-converges to $\bar I\colon L^{p_\text{min}}(\Omega)\to \bar\R$ defined as
$$
\bar I(v) =\begin{cases}
I_0(v) & \text{if } v\in W^{\vec{s},\vec{p},\vec{\delta}}_0(\Omega),\\
\infty & \text{elsewhere}.
\end{cases}
$$
\end{thm}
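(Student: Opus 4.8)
The plan is to verify the two Gamma-convergence inequalities separately, reducing everything to the one-direction BBM results already established for the scalar functionals $J^i_{\delta_i^k}$ and to the Gamma-convergence of $J_{\vec\delta_k}$ proven in Corollary~\ref{gammaperidynamic}. The linear term $v\mapsto \int_\Omega fv\,dx$ is continuous on $L^{p_{\min}}(\Omega)$ (using $f\in L^{(\vec p\,^*)'}(\Omega)\subset L^{p_{\min}'}(\Omega)$ on the bounded set $\Omega$, or more carefully the embedding $W^{\vec s,\vec p,\vec\delta}_0(\Omega)\hookrightarrow L^{p_{\min}}(\Omega)$ together with the duality between $L^{\vec p\,^*}$ and $L^{(\vec p\,^*)'}$), hence a continuous perturbation, and Gamma-convergence is stable under continuous perturbations; so it suffices to prove $J_{\vec\delta_k}\overset{\Gamma}{\to}J_{\vec\delta_0}$ on $L^{p_{\min}}(\Omega)$ together with an equicoercivity-type control that makes the extension by $+\infty$ harmless.

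First I would prove the $\liminf$ inequality. Let $v_k\to v$ in $L^{p_{\min}}(\Omega)$. If $\liminf_k \bar I_k(v_k)=+\infty$ there is nothing to prove, so pass to a subsequence realizing the $\liminf$ as a finite limit; along it $v_k\in W^{\vec s,\vec p,\vec\delta}_0(\Omega)$ and $\sup_k J_{\vec\delta_k}(v_k)<\infty$, which gives $\sup_k [v_k]^i_{s_i,p_i,\delta_i^k}\cdot\delta_i^{k\,-p_i(1-s_i)}<\infty$ for $i\le j$ and $\sup_k [v_k]^i_{s_i,p_i,\delta_i}<\infty$ for $i>j$. For the indices $i=1,\dots,j$ with $\delta_i^k\to 0$, Theorem~\ref{Peridynamics-teo2} applies (the uniform $L^{p_i}$ bound on $v_k$ follows from the embedding and the energy bound), yielding
$$
\liminf_{k\to\infty} J^i_{\delta_i^k}(v_k)\ \ge\ J^i(v)=\frac{2}{p_i(1-s_i)}\|\partial_{x_i}v\|_{p_i}^{p_i},
$$
and in particular $\partial_{x_i}v\in L^{p_i}$. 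For $i=j+1,\dots,n$ the functional $J^i_{\delta_i}$ is continuous (indeed Fréchet differentiable) on $W^{s_i,p_i,\delta_i}_{i,0}(\Omega)$ and lower semicontinuous with respect to $L^{p_{\min}}$ convergence along energy-bounded sequences (convexity plus Fatou on the difference quotients, exactly as in the scalar BBM lower-semicontinuity argument), so $\liminf_k J^i_{\delta_i}(v_k)\ge J^i_{\delta_i}(v)$. Summing over $i$ and adding the continuous linear term gives $\liminf_k \bar I_k(v_k)\ge \bar I(v)$; note the argument also shows $v\in W^{\vec s,\vec p,\vec\delta}_0(\Omega)$, so the case $\bar I(v)=+\infty$ cannot occur for a subsequence with finite energy.

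For the $\limsup$ inequality I would use the constant recovery sequence $v_k\equiv v$. If $v\notin W^{\vec s,\vec p,\vec\delta}_0(\Omega)$ then $\bar I(v)=+\infty$ and there is nothing to prove; if $v\in W^{\vec s,\vec p,\vec\delta}_0(\Omega)$ then $v_k=v\in W^{\vec s,\vec p,\vec\delta}_0(\Omega)$ for every $k$ (the domain is the same for all $\vec\delta_k$ and $\vec\delta_0$, up to checking $[v]^i_{s_i,p_i,\delta_i^k}<\infty$, which is clear since $\delta_i^k$ is decreasing). Then $\bar I_k(v)=J_{\vec\delta_k}(v)-\int_\Omega fv$, and by Corollary~\ref{gammaperidynamic} together with Theorem~\ref{peridynamics-teo3} (which gives the pointwise convergence $J^i_{\delta_i^k}(v)\to J^i(v)$ for $i\le j$, i.e. the $\limsup$ part of the scalar statement) and the continuity of $J^i_{\delta_i}$ for $i>j$, we get $J_{\vec\delta_k}(v)\to J_{\vec\delta_0}(v)$, hence $\bar I_k(v)\to \bar I(v)$. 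This furnishes the required recovery sequence.

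The main obstacle I anticipate is the $\liminf$ inequality, specifically justifying that the convergence $v_k\to v$ merely in $L^{p_{\min}}_{\mathrm{loc}}$ (which is all one gets for free on a bounded domain from the $L^{p_{\min}}$ convergence) is enough to invoke Theorem~\ref{Peridynamics-teo2}: one must check that its hypotheses $\sup_k\|v_k\|_{p_i}<\infty$ and $v_k\to v$ in $L^{p_i}_{\mathrm{loc}}$ hold for each $i\le j$. The uniform $L^{p_i}$ bound is the delicate point, since the natural a priori bound from $\sup_k J_{\vec\delta_k}(v_k)<\infty$ controls only the seminorms; one recovers it from the Sobolev-type embedding of $W^{\vec s,\vec p,\vec\delta}_0(\Omega)$ into $L^{\vec p\,^*}(\Omega)\supset L^{p_{\min}}(\Omega)$ valid under the standing assumptions $p_{\max}<\vec p\,^*$ and $\overline{\vec s\vec p}<n$, combined with a Poincaré-type inequality on the bounded set $\Omega$ (functions vanish outside $\Omega$) to absorb the $L^{p_0}$ part of the norm — this is where the hypotheses of Proposition~\ref{existence1} are genuinely used. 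Everything else is a routine assembly of the scalar results.
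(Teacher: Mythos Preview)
Your approach is correct and essentially the same as the paper's: the paper's proof is just two lines, citing Corollary~\ref{gammaperidynamic} for the Gamma-convergence of $J_{\vec\delta_k}$ to $J_{\vec\delta_0}$ and then observing that $v\mapsto\int_\Omega fv\,dx$ is continuous on $L^{p_{\min}}(\Omega)$, so Gamma-convergence is preserved under this continuous perturbation. You have spelled out in detail the content of that corollary (the $\liminf$ via Theorem~\ref{Peridynamics-teo2} and the $\limsup$ via the constant recovery sequence and Theorem~\ref{peridynamics-teo3}), and you flag the $L^{p_i}$-bound and extension-by-$+\infty$ issues that the paper leaves implicit, but the strategy is identical.
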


\begin{proof}
By Corollary \ref{gammaperidynamic} we have that $J_{\vec{\delta}_k}$ gamma converge to $J_{\vec{\delta}_0}$ as $k\to\infty$. So the result follows since the functional $u\mapsto \int_\Omega fu\, dx$ is continuous in $L^{p_\text{min}}(\Omega)$ .
\end{proof}

An immediate consequence of this result is the convergence of the solutions of \eqref{EDP-anisotropica} to the corresponding limiting problem.
\begin{cor}\label{fixed f}
Let $u_k\in W^{\vec{s},\vec{p},\vec{\delta}}_0(\Omega)$ be a weak solution to
$$
\begin{cases}
(-\widetilde{\Delta}_{\vec{p}})^{\vec{s}}_{\vec{\delta}_k} u_k = f & \text{in }\Omega,\\
u_k=0 & \text{in } \R^n\setminus\Omega.
\end{cases}
$$
Then $u_k\to u_0$ in $L^{p_\text{min}}(\Omega)$ where $u_0\in W^{\vec{s},\vec{p},\vec{\delta_0}}_0(\Omega)$ is the weak solution to
$$
\begin{cases}
(-\widetilde{\Delta}_{\vec{p}})^{\vec{s}_0}_{\vec{\delta}_k} u_0 = f & \text{in }\Omega,\\
u_0=0 & \text{in } \R^n\setminus\Omega.
\end{cases}
$$
\end{cor}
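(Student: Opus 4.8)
The plan is to read this off from the Gamma-convergence $\bar I_k\overset{\Gamma}{\longrightarrow}\bar I$ proved just above, via the standard principle that Gamma-convergence together with equicoercivity forces minimizers to converge to a minimizer of the limit functional. The three ingredients are: (i) the limit problem has a \emph{unique} minimizer, which is exactly $u_0$; (ii) the family $\{u_k\}$ is precompact in $L^{p_{\min}}(\Omega)$; (iii) the fundamental theorem of Gamma-convergence. For (i), note that $I_0(v)=J_{\vec\delta_0}(v)-\int_\Omega fv\,dx$ is strictly convex, coercive and continuous on $W^{\vec s,\vec p,\vec\delta_0}_0(\Omega)$: the terms $J^i$ for $i\le j$ are the anisotropic Dirichlet energies $\tfrac{2}{p_i(1-s_i)}\|\partial_i\,\cdot\,\|_{p_i}^{p_i}$ and the terms $J^i_{\delta_i}$ for $i>j$ are the ones already treated in Proposition \ref{existence1}; hence the direct method produces a unique minimizer $u_0$, and by the same Euler--Lagrange computation as for \eqref{EDP-anisotropica} this $u_0$ is precisely the unique weak solution of the limiting equation.

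The real work is the equicoercivity, i.e. the precompactness of $\{u_k\}$ in $L^{p_{\min}}(\Omega)$. Testing the weak formulation of the $k$-th problem with $v=u_k$ and using that each $J^i_{\delta_i}$ is positively $p_i$-homogeneous, so that $\langle (J^i_{\delta_i})'(u),u\rangle=p_iJ^i_{\delta_i}(u)$, gives the energy identity $J_{\vec\delta_k}(u_k)=\int_\Omega f u_k\,dx$. Combining this with the Sobolev inequality for $W^{\vec s,\vec p,\vec\delta_k}$ — whose constant can be taken independent of $k$, since $s_i\in[s_0,1)$ and, for small $\delta_i^k$, the normalized truncated energies dominate the local energies up to a vanishing error by Theorem \ref{Peridynamics-teo2} — and Young's inequality, one absorbs and obtains $\sup_k J_{\vec\delta_k}(u_k)<\infty$ together with $\sup_k\|u_k\|_{p_{\min}}<\infty$. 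Since each $u_k$ vanishes outside the bounded set $\Omega$, the anisotropic BBM compactness — obtained exactly as in the proof of Theorem \ref{Peridynamics-teo2} by applying Ponce's method to the mollified functions $u_{k,\varepsilon}=\rho_{\varepsilon}*(u_k\eta_m)$ and using the monotonicity of $[\cdot]^i_{s,p,\delta}$ under mollification from Lemma \ref{regular} — then shows that $\{u_k\}$ is precompact in $L^{p_{\min}}(\Omega)$.

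With (i) and (ii) in hand, (iii) is routine: given any subsequence of $\{u_k\}$, extract a further subsequence converging in $L^{p_{\min}}(\Omega)$ to some $\bar u$; the constraint $\bar u|_{\Omega^c}=0$ passes to the limit and, by the $\liminf$ inequality, $\bar u\in W^{\vec s,\vec p,\vec\delta_0}_0(\Omega)$. Comparing $\bar I(\bar u)\le\liminf_k\bar I_k(u_k)\le\limsup_k\bar I_k(u_k)\le\limsup_k\bar I_k(w_k)=\bar I(u_0)$, where $(w_k)$ is a recovery sequence for $u_0$ and we used that $u_k$ minimizes $\bar I_k$, we see $\bar u$ minimizes $\bar I$, hence $\bar u=u_0$ by uniqueness. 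Since the limit is independent of the subsequence, the full sequence converges: $u_k\to u_0$ in $L^{p_{\min}}(\Omega)$. The one genuinely delicate point is the equicoercivity in the directions where the horizon vanishes, since there the compactness bound and the Sobolev estimate must hold uniformly as $\delta_i^k\to 0$; this is precisely where the BBM machinery of Section 3 (Theorem \ref{Peridynamics-teo2} and the mollification estimates of Lemma \ref{regular}) is needed, everything else being a formal consequence of convexity and of the Gamma-convergence already established.
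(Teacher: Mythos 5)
Your proposal takes the same overall route as the paper (the fundamental theorem of Gamma-convergence applied to the $\Gamma$-limit established just above), but with a significant difference in emphasis: the paper dismisses the corollary as an ``immediate consequence'' and gives no proof at all, while you correctly single out equicoercivity of $\{\bar I_k\}$ (equivalently, precompactness of the minimizers $u_k$ in $L^{p_{\min}}(\Omega)$) as the step that actually requires work and that Gamma-convergence alone does not give. Your treatment of parts (i) and (iii), and the energy identity $J_{\vec\delta_k}(u_k)=\int_\Omega f u_k\,dx$ obtained by testing with $u_k$ and using the $\frac{1}{p_i}$ normalization in the definition of $J_{\vec\delta}'$, are correct.

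The one genuine gap is precisely where you flag it yourself: the uniform-in-$k$ Sobolev inequality needed to absorb $\int_\Omega fu_k\,dx$ into $J_{\vec\delta_k}(u_k)$. You appeal to Theorem~\ref{Peridynamics-teo2}, but that theorem is a $\liminf$ lower bound along a \emph{single} sequence $(u_k,\delta_k)$ — it tells you that any weak limit of a bounded family lies in the local anisotropic Sobolev space; it does not produce, and cannot substitute for, a quantitative Poincaré--Sobolev inequality of the form $\|v\|_{\vec p^{*}}\le C\,J_{\vec\delta}(v)^{\theta}$ with $C$ uniform in $\delta$ small, which is what ``absorbing'' requires. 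Such a uniform truncated-horizon Poincaré inequality is known (it is essentially the content of the peridynamic compactness results in Bellido--Mora and Bellido--Ortega, which the paper cites but your proof does not invoke for this purpose), but it is not a formal consequence of Lemma~\ref{regular} or Theorem~\ref{Peridynamics-teo2}. So the equicoercivity step, as you write it, has a circularity: you need $\sup_k\|u_k\|_{p_{\min}}<\infty$ \emph{before} you can apply the sequential compactness argument of Theorem~\ref{Peridynamics-teo2}, yet that a priori bound is what the compactness argument was supposed to deliver. Replacing the appeal to Theorem~\ref{Peridynamics-teo2} with an explicit uniform peridynamic Poincaré inequality on $W^{\vec s,\vec p,\vec\delta}_0(\Omega)$ (with constant independent of $\delta$ in a neighbourhood of $0$) would close the argument; as written, it does not.
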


\section{Anisotropic fractional Sobolev Spaces with Variable Exponents}\label{Anisotropic fractional Variable Exponents}

\subsection{Properties and Inequalities}

We define the fractional $i^{th}-$Sobolev variable exponent space as
$$
W_i^{s_i,p_i(\cdot,\cdot)}(\R^N):=\left\{u\in L^{\bar{p}_i}(\R^N):\quad J_{s_i,p_i(\cdot,\cdot)}(u)<\infty\right\}.
$$
We can see without difficulty that the space $W_i^{s_i,p_i(\cdot,\cdot)}(\mathbb{R}^N)$ constitutes a Banach space, characterized by the norm
$$
\|u\|_{s_i,p_i}=\|u\|_{\bar{p}_i}+[u]_{s_i,p_i},
$$
which is both separable and reflexive.
Define
$$
W^{\vec{s},\vec{p}(\cdot,\cdot)}(\R^N)=\bigcap_{i=1}^N W_i^{s_i,p_i(\cdot,\cdot)}(\R^N),
$$
then $W^{\vec{s},\vec{p}(\cdot,\cdot)}(\R^N)$ is a Banach, separable and reflexive space.

\begin{defn}
  A function $u\in L^{1}_{loc}(\R^N)$ belongs to the {\bf{\it homogeneous anisotropic fractional Sobolev spaces with variable exponent }} $\dot{W}^{\vec{s},\vec{p}(\cdot,\cdot)}(\R^N)$ if $\sum_{i=1}^{N}[u]_{s_i,p_i}<\infty.$
\end{defn}

\begin{prop}\label{inclusion}
  If $u\in C^1_c(\R^N),$ then $u\in \dot{W}^{\vec{s},\vec{p}(\cdot,\cdot)}(\R^N).$
\end{prop}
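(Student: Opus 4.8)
The plan is to show that each anisotropic seminorm $[u]_{s_i,p_i}$ is finite for $u\in C^1_c(\R^N)$, which by definition of $\dot W^{\vec s,\vec p(\cdot,\cdot)}(\R^N)$ amounts to exhibiting, for each $i$, some $\lambda>0$ with $J_{s_i,p_i}(u/\lambda)\le 1$. Since $J_{s_i,p_i}(u/\lambda)$ is decreasing in $\lambda$, it suffices to prove that $J_{s_i,p_i}(u)<\infty$ for every $u\in C^1_c(\R^N)$; then for $\lambda$ large enough (using that the integrand is superlinear in $1/\lambda$ once $|u(x+he_i)-u(x)|/\lambda<1$, or simply by dominated convergence) we get $J_{s_i,p_i}(u/\lambda)\le 1$.

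The key estimates for bounding $J_{s_i,p_i}(u)=\int_{\R^N}\int_{\R}\frac{|u(x+he_i)-u(x)|^{p_i(x,x+he_i)}}{|h|^{1+s_ip_i(x,x+he_i)}}\,dh\,dx$ are the usual split into the region $\{|h|\le 1\}$ and $\{|h|>1\}$. On $\{|h|\le1\}$: by the mean value theorem $|u(x+he_i)-u(x)|\le \|\partial_i u\|_\infty |h|$, and the support condition forces $x$ to lie in a fixed bounded set of finite measure (the $|h|\le1$ neighbourhood of $\supp u$ in the $e_i$-direction). With $p^-:=\inf p_i$, $p^+:=\sup p_i$ both finite and $>1$ by \eqref{P}, and $|h|\le1$, one has $|u(x+he_i)-u(x)|^{p_i(x,x+he_i)}\le \max(\|\partial_i u\|_\infty,1)^{p^+}\,|h|^{p^-}$ (choosing the exponent on $|h|$ to be the smaller power when $|h|\le1$), while $|h|^{1+s_ip_i(\cdots)}\le |h|^{1+s_i p^-}$ from below; hence the $h$-integral is controlled by $\int_0^1 |h|^{p^--1-s_ip^+}\,dh$, which is finite provided $p^-(1-s_i)>s_i(p^+-p^-)$ — here care is needed, and the cleanest route is to bound $|h|^{p_i}/|h|^{1+s_ip_i}=|h|^{(1-s_i)p_i-1}$ directly and note $(1-s_i)p_i-1\ge (1-s_i)p^- -1$, so finiteness needs $(1-s_i)p^->0$, which holds. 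On $\{|h|>1\}$: $|u(x+he_i)-u(x)|\le 2\|u\|_\infty$, $|h|^{1+s_ip_i(\cdots)}\ge |h|^{1+s_ip^-}$, the numerator is $\le (2\|u\|_\infty+1)^{p^+}$, and again $x$ is confined to a bounded set by the support (either $x\in\supp u$ or $x+he_i\in\supp u$, and for fixed $h$ both force $x$ into a bounded set of measure independent of $h$), so the $h$-integral converges since $1+s_ip^->1$.

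The main obstacle is the bookkeeping with the variable exponent: unlike the constant-exponent case one cannot simply pull $p$ out of the integral, so every pointwise bound must be phrased as a bound uniform in the exponent, replacing $p_i(x,x+he_i)$ by $p^-$ or $p^+$ in whichever direction is favourable depending on whether the base is $\ge1$ or $\le1$ and whether $|h|\ge1$ or $\le1$. The one genuinely delicate point is the small-$h$ behaviour: one must use the $C^1$ (Lipschitz) bound so that the base $|u(x+he_i)-u(x)|$ is small (hence raising it to the larger exponent $p^+$ only helps) while simultaneously the denominator $|h|^{1+s_ip_i}$ is handled by the smaller exponent $p^-$; this yields the integrable power $|h|^{(1-s_i)p^- -1}$ near $0$. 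I would also remark that since $u\in C^1_c\subset L^{\bar p_i}(\R^N)$ trivially, combining the finiteness of all $[u]_{s_i,p_i}$ shows in fact $u\in W^{\vec s,\vec p(\cdot,\cdot)}(\R^N)$, though the statement only claims membership in the homogeneous space.
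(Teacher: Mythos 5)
Your proof is correct and rests on the same two pillars as the paper's: the mean value theorem bound $|u(x+he_i)-u(x)|\le\|\nabla u\|_\infty|h|$ for controlling the singularity at $h=0$, and the compact support of $u$ for confining the $x$-integration to a set of finite measure. The reduction from finiteness of the modular $J_{s_i,p_i}(u)$ to finiteness of the Luxemburg-type seminorm $[u]_{s_i,p_i}$ at the beginning is a point the paper leaves implicit; you state it, correctly.

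Where you genuinely improve on the paper's write-up is the small-$h$ estimate, and you are right to flag it. The paper bounds the $h$-integrand by maximizing the numerator and minimizing the denominator separately, arriving at the majorant $\frac{\max(|h|^{p^-},|h|^{p^+})}{\min(|h|^{1+s_ip^-},|h|^{1+s_ip^+})}$, which near $h=0$ behaves like $|h|^{p^--1-s_ip^+}$; integrability of this at the origin requires $p^->s_ip^+$, a condition that is not among the standing hypotheses and can fail when $p_i$ oscillates and $s_i$ is close to $1$. You instead combine the exponents of $|h|$ before taking extremes, getting $|h|^{(1-s_i)p_i(x,x+he_i)-1}\le|h|^{(1-s_i)p^--1}$ for $|h|\le 1$, which is integrable near $0$ unconditionally since $(1-s_i)p^->0$. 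This is the right inequality, and it is the one needed for the proposition to hold in the generality stated. Your explicit split $\{|h|\le1\}$ versus $\{|h|>1\}$, with the tail handled via $|u(x+he_i)-u(x)|\le 2\|u\|_\infty$ and $1+s_ip^->1$, is also cleaner than the paper's compression of the $h$-domain to a bounded interval $I_i$, which silently discards the (finite but nonzero) tail contribution from large $|h|$ with $x\in\supp u$. In short: same strategy, but your execution of the small-$h$ estimate is the careful one.
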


\begin{proof}
Let $u\in C^1_c(\R^N),$ there exists $R>0$ such that $u(x)=0$ for all $x\in \R^N,$ $|x|\geq R.$ By the mean value Theorem,
$$
|u(x)-u(y)|\leq \|\nabla u\|_\infty |x-y|,\quad\forall\ x,y\in B_R(0).
$$
By Tonelli's theorem and relabeling the variables of integration, we can write
\begin{align*}
  &\int_{\R^N}\int_{\R}\frac{|u(x+he_i)-u(x)|^{p_i(x,x+he_i)}}{|h|^{1+s_i p_i(x,x_i(h))}}\, dhdx\\ & =\int_{B_R(0)}\int_{I_i}\frac{|u(x+he_i)-u(x)|^{p_i(x,x+he_i)}}{|h|^{1+s_i p_i(x,x+he_i)}}\, dhdx\\
  &\leq \int_{B_R(0)}\int_{I_i}\frac{\|\nabla u\|_\infty^{p_i(x,x+he_i)} |h|^{p_i(x,x+he_i)}}{|h|^{1+s_i p_i(x,x+he_i)}}dhdx\\
  &\leq \max\left(\|\nabla u\|_\infty^{p^-},\|\nabla u\|_\infty^{p^+}\right)
  \int_{I_i}\frac{\max\left(|h|^{p^-},|h|^{p^+}\right)}{\min\left\{|h|^{1+s_i (p_i)^{-})},|h|^{1+s_i (p_i)^+)}\right\}}dh\\
  &:=C_i
\end{align*}
where $C_i>0,$ and $I_i=\left\{h\in\R :\ \ x+he_i\in B_R(0),\ \forall x\in B_R(x)\right\}.$ It is not difficult to see that $I_i$ will be a bounded subset of $\R.$ This completes the proof.
\end{proof}

\begin{defn}
  We define $W^{\vec{s},\vec{p}(\cdot,\cdot)}_{\bar{p}}(\Omega)$ as a $C^1_c(\mathbb{R}^N)$-module if, for any $u \in W^{\vec{s},\vec{p}(\cdot,\cdot)}_{\bar{p}}(\Omega)$ and $v \in C^1_c(\mathbb{R}^N)$, the product $uv$ also belongs to $W^{\vec{s},\vec{p}(\cdot,\cdot)}_{\bar{p}}(\Omega)$.
\end{defn}

Possessing the $C^1_c(\mathbb{R}^N)$-module property is considered advantageous. Let's investigate the conditions under which $W^{\vec{s},\vec{p}(\cdot,\cdot)}_{\bar{p}}(\Omega)$ is indeed a $C^1_c(\mathbb{R}^N)$-module.

\begin{prop}
  Let $\Omega$ be a bounded subset of $\R^N.$ $W^{\vec{s},\vec{p}(\cdot,\cdot)}_{\bar{p}}(\Omega)$ is a $C^1_c(\mathbb{R}^N)$-module if
  $$
  \bar{p}(x)\geq p_M(x)\quad\text{for a.e.}\ x\in\Omega,
  $$
  where $p_M(x):=\max\left\{p_1(x,x),\cdots,p_N(x,x)\right\}.$
\end{prop}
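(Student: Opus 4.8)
The plan is to show that if $u \in W^{\vec{s},\vec{p}(\cdot,\cdot)}_{\bar p}(\Omega)$ and $v \in C^1_c(\mathbb{R}^N)$, then each seminorm term $J_{s_i,p_i}(uv)$ is finite, which gives $uv \in W^{\vec{s},\vec{p}(\cdot,\cdot)}_{\bar p}(\Omega)$. First I would record the membership of the product in the Lebesgue piece: since $v$ is bounded, $\|uv\|_{\bar p} \le \|v\|_\infty \|u\|_{\bar p} < \infty$, so $uv \in L^{\bar p}(\Omega)$. The real work is controlling the anisotropic Gagliardo-type integrals $J_{s_i,p_i}(uv)$.

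The key step is the pointwise discrete product rule along the $i$-th direction. Writing $D_i(w)(x) = w(x+he_i) - w(x)$ (up to the absolute value in the paper's notation), one has
\begin{equation*}
(uv)(x+he_i) - (uv)(x) = v(x+he_i)\big(u(x+he_i)-u(x)\big) + u(x)\big(v(x+he_i)-v(x)\big).
\end{equation*}
Using the elementary inequality $(a+b)^{p_i(x,x+he_i)} \le 2^{p^+-1}(a^{p_i(x,x+he_i)}+b^{p_i(x,x+he_i)})$ and $\|v\|_\infty < \infty$, the contribution of the first term is bounded by a constant times $J_{s_i,p_i}(u)$, which is finite by hypothesis. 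For the second term, one exploits that $v$ is Lipschitz with compact support: $|v(x+he_i)-v(x)| \le \|\nabla v\|_\infty |h|$ and the integrand is supported where $x$ or $x+he_i$ lies in $\supp v$. After raising to the power $p_i(x,x+he_i)$ one is left, as in the proof of Proposition~\ref{inclusion}, with an integral of the form $\int_{\mathbb{R}^N}\int_{\mathbb{R}} |u(x)|^{p_i(x,x+he_i)} |h|^{p_i(x,x+he_i) - 1 - s_i p_i(x,x+he_i)}\,dh\,dx$ restricted to a bounded region; the $h$-integral converges near $0$ and is cut off at a finite radius by the support condition.

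The place where the hypothesis $\bar p(x) \ge p_M(x)$ enters — and the main obstacle — is estimating $\int_{\Omega}|u(x)|^{p_i(x,x+he_i)}\,dx$ in terms of the norm $\|u\|_{\bar p}$. Since $u$ only belongs to $L^{\bar p(\cdot)}$ and the exponent appearing is $p_i(x,x+he_i)$ rather than $\bar p(x)$, one needs a local embedding $L^{\bar p(\cdot)}(\Omega) \hookrightarrow L^{q(\cdot)}(\Omega)$ which holds on bounded $\Omega$ precisely when $q(x) \le \bar p(x)$ a.e. Here, for $|h|$ bounded and $x$ ranging over the bounded set $\Omega \cap (\supp v - he_i)$, continuity of $p_i$ gives $p_i(x,x+he_i)$ close to $\bar p_i(x) = p_i(x,x)$, but to get a clean bound uniform in $h$ one wants $p_i(x,x+he_i) \le \bar p(x)$, for which $\bar p(x) \ge p_M(x) \ge \bar p_i(x)$ combined with the continuity of $p_i$ and a splitting of the $h$-integration into a small ball (where $p_i(x,x+he_i)$ is near $\bar p_i(x) \le \bar p(x)$, absorbing the discrepancy into the constant via Proposition~2.4 (iii)–(iv)) and a compact annulus (finitely many $h$, each handled by the same embedding). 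Carefully arranging this splitting so that the constants remain finite is the technical heart of the argument; once it is done, summing the two product-rule contributions over $i=1,\dots,N$ yields $\sum_i J_{s_i,p_i}(uv) < \infty$, completing the proof.
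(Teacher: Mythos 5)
Your product-rule strategy is the correct route and supplies more than what the paper actually prints. The paper's printed proof, after observing $uv\in L^{\bar p}(\Omega)$, simply asserts ``From Proposition \ref{inclusion}, $uv\in\dot W^{\vec s,\vec p(\cdot,\cdot)}(\R^N)$.'' But Proposition \ref{inclusion} concerns functions in $C^1_c(\R^N)$, and $uv$ inherits only the fractional regularity of $u$, so as written the paper never justifies finiteness of the seminorms $J_{s_i,p_i}(uv)$. Your decomposition
$(uv)(x+he_i)-(uv)(x)=v(x+he_i)\,D_i(u)(x)+u(x)\,D_i(v)(x)$, with the first piece absorbed by $\|v\|_\infty$ and $J_{s_i,p_i}(u)$, is precisely the argument needed to substantiate the statement, so you are filling a real gap rather than rederiving a given proof.

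There is, however, a concrete error in your handling of the second piece. The Lipschitz bound $|v(x+he_i)-v(x)|\le\|\nabla v\|_\infty|h|$ alone produces the $h$-factor $|h|^{p_i(x,x+he_i)(1-s_i)-1}$, whose exponent always exceeds $-1$ (since $p_i>1$, $s_i<1$), so the $h$-integral diverges at infinity. Your claim that the $h$-integration ``is cut off at a finite radius by the support condition'' is false precisely when $v(x)\neq 0$: once $|h|$ is large enough that $x+he_i\notin\supp v$, the difference $v(x+he_i)-v(x)=-v(x)$ remains nonzero for all larger $|h|$. The standard repair is to split at $|h|=1$ and use the crude bound $|v(x+he_i)-v(x)|\le 2\|v\|_\infty$ for $|h|>1$, yielding the convergent factor $|h|^{-1-s_ip_i(x,x+he_i)}$; the paper's own proof of Proposition \ref{inclusion} elides the same issue by restricting to $B_R(0)\times I_i$. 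Beyond that, the step you single out as the technical heart---bounding $\int_\Omega|u(x)|^{p_i(x,x+he_i)}\,dx$ by $\|u\|_{\bar p}$ when the exponent is $p_i(x,x+he_i)$ rather than $\bar p_i(x)=p_i(x,x)$---is indeed the crux, and your sketch (small-$|h|$ ball plus compact annulus, constants from the modular--norm relations) is plausible but not yet carried out: the hypothesis $\bar p\ge p_M$ does not by itself give $p_i(x,x+he_i)\le\bar p(x)$ for $h\neq0$, so the absorption of the discrepancy via uniform continuity of $p_i$ still needs to be written down.
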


\begin{proof}
  Since $p_i(x)\leq \bar{p}(x),$ then $L^{\bar{p}}(\Omega)\hookrightarrow L^{p_i(\cdot)}(\Omega).$ Let $u\in W^{\vec{s},\vec{p}(\cdot,\cdot)}_{\bar{p}}(\Omega)\subset L^{\bar{p}}(\Omega)$ and $v\in C^1_c(\mathbb{R}^N)\subset L^{p_0(\cdot)}(\Omega)$, then, obviously $uv\in L^{p_0(\cdot)}(\Omega).$ From Proposition \ref{inclusion},  $uv\in \dot{W}^{\vec{s},\vec{p}(\cdot,\cdot)}(\R^N).$ Thus $W^{\vec{s},\vec{p}(\cdot,\cdot)}_{p_0(\cdot)}(\Omega)$ is a $C^1_c(\mathbb{R}^N)$-module.
\end{proof}

The space $W^{\vec{s},\vec{p}(\cdot,\cdot)}_{p_M(\cdot)}(\Omega)$ is written simply by $W^{\vec{s},\vec{p}(\cdot,\cdot)}(\Omega),$ namely
$$
W^{\vec{s},\vec{p}(\cdot,\cdot)}(\Omega)=\left\{u\in L^{p_M(\cdot)}(\Omega):\quad\sum_{i=1}^{N}J_{s_i,p_i,\Omega}(u)<\infty\right\}.
$$

\subsection{BBM-type results for $W_i^{s,p(\cdot,\cdot)}(\R^N)$}

In this section, we are operating with a constant direction and fixed parameters. Consequently, we will dispense with the use of the index "$i$" and simply represent them as $s,p$ and $\overline{p}$ instead of $s_i,p_i$ and $\overline{p}_i$ respectively, and we fix ourselves in the first direction.

\begin{thm}\label{BBM}
  Let $u\in C^2_c(\R^N).$ Then
  $$
  \lim_{s\to 1} J_{s,p(\cdot,\cdot)}=\int_{\R^N}\frac{2}{\overline{p}(x)}|\partial_{1}u|^{\bar{p}(x)}\,dx.
  $$
\end{thm}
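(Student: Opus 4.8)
The strategy is to reduce the variable-exponent statement to the constant-exponent BBM computation already carried out in Theorem \ref{Peridynamics-teo1}, via a pointwise-in-$h$ analysis followed by dominated convergence in $x$. Write, for $u\in C^2_c(\R^N)$ with $\supp u\subset Q_R$,
$$
J_{s,p(\cdot,\cdot)}(u)=\int_{\R^N}\int_{\R}\frac{|u(x+he_1)-u(x)|^{p(x,x+he_1)}}{|h|^{1+sp(x,x+he_1)}}\,dh\,dx
=\int_{\R^N}G_s(x)\,dx,
$$
where $G_s(x)$ denotes the inner $h$-integral. The first step is to show that for a.e.\ fixed $x$, $G_s(x)\to \tfrac{2}{\bar p(x)}|\partial_1 u(x)|^{\bar p(x)}$ as $s\to 1$. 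Split the $h$-integral into $|h|\le \delta$ and $|h|>\delta$ for a small fixed $\delta$: on $|h|>\delta$ the integrand is bounded (using $|u(x+he_1)-u(x)|\le 2\|u\|_\infty$ and $|h|^{-1-sp}\le \delta^{-1-sp}$ is uniformly controlled since $p$ is bounded), so that piece contributes $O(1)$ uniformly in $s$, hence is negligible after... no — here the key point is that, unlike the classical BBM limit where one multiplies by $(1-s)$, here we are in the peridynamic regime where \emph{no} normalizing factor appears because the integral in $h$ is over all of $\R$, not truncated; wait — re-examining, the integrand decays like $|h|^{-1-sp+p}=|h|^{p(1-s)-1}$ near $0$ which is integrable for $s<1$, and like $|h|^{-1-sp}$ at infinity which is integrable. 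As $s\to1$ the near-zero behavior $|h|^{p(1-s)-1}$ concentrates its mass, and the Taylor estimate $|u(x+he_1)-u(x)|^{p}=|h|^{p}|\partial_1 u(x)|^{p}+O(|h|^{p+1})$ (valid for $|h|\le\delta$, with $C^2$ constants) gives that the dominant contribution is
$$
|\partial_1 u(x)|^{p(x,x)}\int_{|h|\le\delta}|h|^{p(x,x)(1-s)-1}\,dh + (\text{error}),
$$
and $\int_{|h|\le\delta}|h|^{p(1-s)-1}\,dh=\tfrac{2\delta^{p(1-s)}}{p(1-s)}\to \tfrac{2}{p(1-s)}\cdot\!$ — no, this \emph{diverges} as $s\to1$.

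So the honest statement must be that the claimed limit holds because the \emph{remaining} mass over $|h|>\delta$ also diverges in a compensating way; more precisely one should not split at a fixed $\delta$ but rather observe that $\int_{\R}|h|^{p(1-s)-1}\,dh$ does not converge, so the whole identity forces the exponent $p(x,x+he_1)$ in the \emph{power of the numerator} to matter only through its value at $h=0$, while the claim is really about the behavior where the continuity $p(x,x+he_1)\to p(x,x)=\bar p(x)$ as $h\to0$ is used together with the concentration. The clean route, and the one I would follow, is: (i) fix $x$, write $q(h):=p(x,x+he_1)$, a continuous function with $q(0)=\bar p(x)$; (ii) by the $C^2$ estimate \eqref{c2estimate}, $|u(x+he_1)-u(x)|=|h||\partial_1u(x)|+O(h^2)$ so the numerator is $|h|^{q(h)}(|\partial_1 u(x)|+O(|h|))^{q(h)}$; (iii) change variables and show that $\int_{\R}|h|^{q(h)-1-sq(h)}(\cdots)\,dh$, once one extracts the substitution making the singularity explicit, converges as $s\uparrow1$ to $\tfrac{2}{\bar p(x)}|\partial_1 u(x)|^{\bar p(x)}$ — this is the content of the one-dimensional BBM lemma with a variable exponent, and I expect it follows by comparing with the constant-exponent integral $\int |h|^{\bar p(x)(1-s)-1}\,dh$ after estimating $|h|^{(q(h)-\bar p(x))(1-s)}=1+o(1)$ uniformly on compact $h$-sets and handling $|h|$ away from $0$ by the already-noted $|h|^{-1-sp}$ integrability (which does go to a finite limit there). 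Thus $G_s(x)\to \tfrac{2}{\bar p(x)}|\partial_1 u(x)|^{\bar p(x)}$ pointwise.

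For the second step — passing the limit inside $\int_{\R^N}dx$ — I would produce an $s$-uniform integrable majorant for $G_s(x)$ on the region $|x_1|\le 2R$ (the only region where $G_s\neq0$, since the support argument from the proof of Theorem \ref{Peridynamics-teo1} applies verbatim: if $|x_1|>2R$ then $u(x+he_1)=u(x)=0$ for the relevant $h$). On that region, split $G_s$ into the $|h|\le1$ and $|h|>1$ parts; the $|h|>1$ part is bounded by $(2\|u\|_\infty)^{p^+}\!\int_{|h|>1}|h|^{-1-sp^-}\,dh\le C$ uniformly for $s$ near $1$, and the $|h|\le1$ part is bounded using $|u(x+he_1)-u(x)|\le\|\nabla u\|_\infty|h|$ by $\int_{|h|\le1}\|\nabla u\|_\infty^{q(h)}|h|^{q(h)(1-s)-1}\,dh\le \max(\|\nabla u\|_\infty^{p^-},\|\nabla u\|_\infty^{p^+})\!\int_{|h|\le1}|h|^{p^-(1-s)-1}\,dh=\max(\cdots)\tfrac{2}{p^-(1-s)}$, which is \emph{not} uniformly bounded as $s\to1$. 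This is the genuine obstacle: the naive majorant blows up. The resolution, which I would implement following Remark \ref{rem1} / the structure of the proof of Theorem \ref{Peridynamics-teo1}, is to instead use the refined $C^2$ estimate to write $G_s(x)-\tfrac{2}{\bar p(x)}|\partial_1u(x)|^{\bar p(x)}$ as an integral whose integrand is $O(|h|^{q(h)(1-s)})\cdot(\text{integrable in }h)$ plus lower-order terms, and bound \emph{that difference} uniformly by $C\chi_{Q_{2R}}(x)$, exactly mirroring the estimate $|\delta^{-p(s-1)}F_\delta(x)|\le C\chi_{Q_{2R}}(x)$ in the proof of Theorem \ref{Peridynamics-teo1}; then dominated convergence applies to the difference and yields the claim. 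The main obstacle is therefore precisely this: organizing the variable-exponent $C^2$ expansion so that the divergent-looking pieces cancel against the target term $\tfrac{2}{\bar p(x)}|\partial_1 u|^{\bar p(x)}$ and only a uniformly-controlled remainder is left, and checking that the variable exponent $p(x,x+he_1)$ in both the numerator power and the denominator power can be replaced by its value $\bar p(x)=p(x,x)$ at $h=0$ up to errors that vanish in the limit — this uses uniform continuity of $p$ on compact sets crucially.
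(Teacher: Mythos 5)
The theorem as printed has a typo: the quantity whose limit is being taken should carry a normalization factor $(1-s)$, as one sees immediately from Lemmas \ref{BBMinf} and \ref{BBMsup} (both explicitly written for $(1-s)\int_{\R^N}\int_\R \cdots$) and from the definition $F_s(x):=(1-s)\int_\R\cdots$ in Lemma \ref{Dominate}. You correctly computed that without this factor the inner $h$-integral behaves like $\frac{\delta^{p(1-s)}}{p(1-s)}\to\infty$ near $h=0$, so the stated limit would be $+\infty$ for any nontrivial $u$. But instead of concluding that the $(1-s)$ is missing and proceeding with the corrected statement, you went looking for a ``compensating divergence'' from the $|h|>\delta$ region (which is plainly bounded there), and in step (iii) you assert that $\int_\R |h|^{q(h)-1-sq(h)}(\cdots)\,dh$ ``converges as $s\uparrow 1$ to $\tfrac{2}{\bar p(x)}|\partial_1 u(x)|^{\bar p(x)}$'' --- this is false without the factor. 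The paper's argument is exactly your Plan A once $(1-s)$ is reinstated: pointwise convergence $F_s(x)\to\frac{2}{\bar p(x)}|\partial_1 u(x)|^{\bar p(x)}$ via the $C^2$ Taylor estimate and uniform continuity of $p$ (Lemma \ref{Dominate-convergence}), an $s$-uniform $L^1$ majorant (Lemma \ref{Dominate}), and dominated convergence. The ``genuine obstacle'' you raise in your second step --- that $\int_{|h|\le1}|h|^{p^-(1-s)-1}\,dh=\frac{2}{p^-(1-s)}$ is unbounded as $s\to1$ --- evaporates once the $(1-s)$ prefactor is present: $(1-s)\cdot\frac{2}{p^-(1-s)}=\frac{2}{p^-}$, which is precisely the bound appearing in Lemma \ref{Dominate}.

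A second, independent gap: you discard the region $|x_1|>2R$ by ``applying verbatim'' the support argument from Theorem \ref{Peridynamics-teo1}. That argument only works in the peridynamic setting, where $|h|\le\delta<R$ forces $u(x+he_i)=0$ when $|x_1|>2R$. Here $h$ ranges over all of $\R$, so for $|x_1|>2R$ one still has $u(x+he_1)\neq 0$ for $h\approx -x_1$. The paper handles this tail region with the decay estimate $|h|\ge\frac12|x_1|$, giving a majorant $\chi_{\{|x_1|\ge 2R\}}(x)\,\bigl(\tfrac{2}{|x_1|}\bigr)^{1+sp^-}G(x')$ with $G\in L^1(\R^{N-1})$; this cannot be dismissed. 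So your majorant, as proposed, fails on the tail, in addition to the normalization issue on the core region.
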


\begin{lema}\label{BBMinf}
   Let $u\in C^2_c(\R^N).$ Then
  \begin{equation}\label{BBMliminf}
    \int_{\R^N}\frac{2}{\overline{p}(x)}|\partial_1u|^{\bar{p}(x)}dx\leq \liminf_{s\nearrow1}(1-s)\int_{\R^N}\int_{\R}\frac{|u(x+he_1)-u(x)|^{p(x,x+he_1)}}{|h|^{1+s p(x,x+he_1)}}\, dhdx.
  \end{equation}
\end{lema}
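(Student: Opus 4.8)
The plan is to reduce the statement to a pointwise lower bound on the inner $h$-integral and then invoke Fatou's lemma. First I would fix $u\in C^2_c(\R^N)$ and work with the function
$$
G_s(x)=(1-s)\int_{\R}\frac{|u(x+he_1)-u(x)|^{p(x,x+he_1)}}{|h|^{1+sp(x,x+he_1)}}\,dh.
$$
Since $u$ has compact support, for $|x|$ large the integrand vanishes for $|h|$ in a neighborhood of $0$ and decays like $|h|^{-1-sp}$ at infinity, so $G_s(x)$ is finite for each $x$ and supported (up to the tail) near $\supp u$; the key point is to show
$$
\liminf_{s\nearrow 1} G_s(x)\ \ge\ \frac{2}{\bar p(x)}|\partial_1 u(x)|^{\bar p(x)}\qquad\text{for a.e. }x.
$$
Then, because $G_s\ge 0$, Fatou's lemma gives $\liminf_{s\nearrow1}\int_{\R^N} G_s\,dx\ge \int_{\R^N}\liminf_{s\nearrow1} G_s\,dx$, which is exactly \eqref{BBMliminf}.

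For the pointwise bound I would restrict the $h$-integration to a small symmetric interval $\{|h|\le\rho\}$ (discarding the nonnegative tail only lowers the quantity), and on that interval use the continuity of $p$: given $\eta>0$, choose $\rho$ so small that $|p(x,x+he_1)-\bar p(x)|<\eta$ for $|h|\le\rho$. Using the $C^2$ Taylor expansion $u(x+he_1)-u(x)=h\,\partial_1 u(x)+O(h^2)$ (with the $O$ uniform since $u\in C^2_c$), the integrand behaves like $|h|^{p(x,x+he_1)}|\partial_1u(x)+O(h)|^{p(x,x+he_1)}\,|h|^{-1-sp(x,x+he_1)}$. A change of variables and the elementary computation
$$
(1-s)\int_{|h|\le\rho}\frac{|h|^{p}}{|h|^{1+sp}}\,dh=(1-s)\cdot 2\int_0^\rho h^{p(1-s)-1}\,dh=\frac{2\rho^{p(1-s)}}{p}\ \xrightarrow[s\nearrow1]{}\ \frac{2}{p}
$$
then yields, after sending $s\nearrow1$, a lower bound of the form $\frac{2}{\bar p(x)+\eta}|\partial_1u(x)|^{\bar p(x)-\eta}(1-o(1))$ (one has to be mildly careful splitting the exponent into its value at $x$ plus the controlled error, and treating separately the points where $\partial_1u(x)=0$, where the bound is trivial). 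Letting $\eta\to0$ and using continuity of $t\mapsto |\partial_1u(x)|^{t}$ gives the claimed pointwise inequality.

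The main obstacle is the interplay between the variable exponent $p(x,x+he_1)$ and the small-$h$ asymptotics: one cannot simply pull $p$ out of the integral, so the argument must sandwich $p(x,x+he_1)$ between $\bar p(x)\pm\eta$ \emph{inside} the $|h|^{p(1-s)-1}$ factor and then check that both the prefactor $\rho^{p(1-s)}$ and the base $|\partial_1u(x)+O(h)|^{p}$ converge to the right thing uniformly enough to survive the double limit $s\nearrow1$, $\eta\to0$. A secondary technical point is justifying that the discarded tail $\{|h|>\rho\}$ contributes a nonnegative amount (immediate, since the integrand is nonnegative) so that truncation is legitimate for a lower bound. Everything else — finiteness of $G_s$, applicability of Fatou — is routine given $u\in C^2_c(\R^N)$.
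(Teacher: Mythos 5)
Your proposal is correct, and it takes a genuinely different route from the paper's proof. The paper works at the level of the double integral throughout: for each fixed $x$ and small radius $r$ (chosen via the continuity of $p$ at the diagonal), it bounds $\int_{|h|<r}|\partial_1 u(x)\cdot h|^{p(x,x+he_1)}|h|^{-1-sp(x,x+he_1)}\,dh$ from below by $(1-\varepsilon)^3 r^{(1-s)\bar p(x)}|\partial_1 u(x)|^{\bar p(x)}/((1-s)\bar p(x))$, from above via a convexity-type inequality $(a+b)^q\le (1+\varepsilon)a^q+C(q,\varepsilon)b^q$ together with the $C^2$ Taylor remainder, and only then integrates over $\R^N$ before sending $s\nearrow 1$ and $\varepsilon\to 0$. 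You instead establish the pointwise inequality $\liminf_{s\nearrow 1}G_s(x)\ge \tfrac{2}{\bar p(x)}|\partial_1 u(x)|^{\bar p(x)}$ and invoke Fatou. Your route has a concrete advantage: the radius $\rho$ and tolerance $\eta$ may depend on $x$ with no penalty, whereas the paper's ``integrate, then take $\liminf$'' step implicitly needs the cutoff $r$ to be uniform in $x$ (something that would have to be justified, say, by uniform continuity of $p$ on the compact support of $u$). Moreover, the pointwise statement you rely on is precisely the lower half of the paper's Lemma~\ref{Dominate-convergence}, which the paper only deploys on the $\limsup$ side; your approach thus quietly unifies the two halves of Theorem~\ref{BBM}. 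Two small points you flagged and should spell out in a full write-up: (i) the lower bound $|\partial_1u(x)|^{p(x,x+he_1)}\ge |\partial_1u(x)|^{\bar p(x)\pm\eta}$ needs a case split on whether $|\partial_1u(x)|\ge 1$ or $<1$ (your displayed expression covers only one case, though the limit $\eta\to 0$ repairs it either way); and (ii) the $(1-o(1))$ absorbing the $O(h)$ Taylor correction uses that the family of measures $(1-s)|h|^{(1-s)\bar p(x)-1}\,dh$ on $\{|h|\le\rho\}$ concentrates at $h=0$ as $s\nearrow 1$. Both are routine, and the overall plan is sound.
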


\begin{proof}
  Since $u\in C^2_c(\R^N),$ there exists $C>0,$ depending on the $C^2-$norm of $u,$ such that
  \begin{equation}\label{BBMinf1}
    |u(x+he_1)-u(x)-\partial_{1}u(x).h|\leq C|h|^2,\quad\text{for all}\quad h\in\R.
  \end{equation}
 Consider fixing $\varepsilon > 0$ and selecting $x \in \mathbb{R}^N.$ The continuity of the function $h \mapsto p(x, x + he_1)$ at $0$ implies the existence of a positive value $r_1,$ this value satisfies the condition that
  \begin{equation}\label{continuity of p}
    (1-\varepsilon)|\partial_{1}u(x)|^{\bar{p}(x)}\leq |\partial_{1}u(x)|^{p(x,x+he_1)}\leq (1+\varepsilon)|\partial_{1}u(x)|^{\bar{p}(x)},
  \end{equation}
  for all $|h|<r_1.$
   Also we have the existence of $r_2>0,$ such that
   \begin{equation}\label{continuity of hp}
    (1-\varepsilon)|h|^{\bar{p}(x)}\leq |h|^{p(x,x+he_1)}\leq (1+\varepsilon)|h|^{\bar{p}(x)},
  \end{equation}
   for all $|h|<r_2.$ Again there exists $r_3>0,$ such that
   \begin{equation}\label{continuity of hhp}
    (1-\varepsilon)\frac{1}{|h|^{\bar{p}(x)}}\leq \frac{1}{|h|^{p(x,x+he_1)}}\leq (1+\varepsilon)\frac{1}{|h|^{\bar{p}(x)}},
  \end{equation}
   for all $0<|h|<r_3.$
   We can derive from \eqref{BBMinf1},
  $$
  |\partial_{1}u(x).h|\leq  |u(x+he_1)-u(x)|+ C|h|^2.
  $$
  Using the inequality
  $$
  (a+b)^q\leq (1+\varepsilon)a^q+C(q,\varepsilon)b^q,\quad a,b\geq 0,\ q>1,
  $$
  we get
  $$
  |\partial_{1}u(x).h|^{p(x,x+he_1)}\leq (1+\varepsilon)|u(x+he_1)-u(x)|^{p(x,x+he_1)}+C |h|^{2p(x,x+he_1)}.
  $$
  If we consider $r_0=\min\{1,r_1,r_2,r_3\}$ and take $0<r<r_0,$ then
  \begin{align*}
    &\int_{|h|<r}\frac{|\partial_{1}u(x).h|^{p(x,x+he_1)}}{|h|^{1+sp(x,x+he_1)}}dh\\
    &\leq
  (1+\varepsilon)\int_{|h|<r}\frac{|u(x+he_1)-u(x)|^{p(x,x+he_1)}}{|h|^{1+sp(x,x+he_1)}}dh
  +C\int_{|h|<r}\frac{|h|^{2p(x,x+he_1)}}{|h|^{1+sp(x,x+he_1)}}dh .
  \end{align*}
 Let's estimate from below the left-hand side of the previous inequality. Referring to \eqref{continuity of p}, \eqref{continuity of hp} and \eqref{continuity of hhp} we find that
  \begin{equation}\label{frombelow}
  \begin{aligned}
     \int_{|h|<r}\frac{|\partial_{1}u(x).h|^{p(x,x+he_1)}}{|h|^{1+sp(x,x+he_1)}}dh
  &\geq(1-\varepsilon)^3|\partial_{1}u|^{\bar{p}(x)}\int_{|h|<r}|h|^{-1+(1-s)\overline{p}(x)}dh\\
  &=\frac{(1-\varepsilon)^3}{(1-s)\overline{p}(x)}|\partial_{1}u|^{\bar{p}(x)}r^{(1-s)\overline{p}(x)}.
  \end{aligned}
  \end{equation}

  Therefore, integrating over $\R^N,$ we get
  \begin{align*}
     &\frac{2}{p^-}(1-\varepsilon)r^{(1-s)p^-}\int_{\R^N}|\partial_{1}u|^{\bar{p}(x)}dx\\
     &\leq (1+\varepsilon)(1-s)\int_{\R^N}\int_{\R}\frac{|u(x+he_1)-u(x)|^{p(x,x+he_1)}}{|h|^{1+sp(x,x+he_1)}}dhdx
     +C\frac{2(1-s)}{(2-s)p^-}r^{(2-s)p^-}.
  \end{align*}
  We pass to $\liminf_{s\nearrow1},$ we obtain
  \begin{align*}
     &\frac{2}{p^-}(1-\varepsilon)r^{(1-s)p^-}\int_{\R^N}|\partial_{1}u|^{\bar{p}(x)}dx\\
     &\leq (1+\varepsilon)\liminf_{s\nearrow1}(1-s)\int_{\R^N}\int_{\R}\frac{|u(x+he_1)-u(x)|^{p(x,x+he_1)}}{|h|^{1+sp(x,x+he_1)}}dhdx.
  \end{align*}
  Since $\varepsilon$ is arbitrarily chosen, we deduce \eqref{BBMliminf}.
\end{proof}

\begin{lema}\label{BBMsup}
   Let $u\in C^2_c(\R^N).$ Then
  \begin{equation}\label{BBMlimsup}
    \limsup_{s\nearrow1}(1-s)\int_{\R^N}\int_{\R}\frac{|u(x+he_1)-u(x)|^{p(x,x+he_1)}}{|h|^{1+s p(x,x+he_1)}}\, dhdx\leq\int_{\R^N}\frac{2}{\overline{p}(x)}|\partial_{1}u|^{\bar{p}(x)}dx.
  \end{equation}
\end{lema}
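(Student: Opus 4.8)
The plan is to mirror the proof of Lemma~\ref{BBMinf}, reversing every inequality. Since $u\in C^2_c(\R^N)$, fix $R>0$ with $\supp u\subset B_R(0)$ and recall from~\eqref{BBMinf1} that $|u(x+he_1)-u(x)|\le|\partial_1u(x)|\,|h|+C|h|^2$ for all $x,h$, with $C$ depending only on the $C^2$--norm of $u$. Given $\varepsilon>0$, I would first choose a radius $r_0=r_0(\varepsilon)\in(0,1)$ so that, for every $x\in B_R(0)$ and $0<|h|<r_0$, both $|\partial_1u(x)|^{p(x,x+he_1)}\le(1+\varepsilon)|\partial_1u(x)|^{\bar p(x)}$ and $|h|^{(1-s)p(x,x+he_1)}\le(1+\varepsilon)|h|^{(1-s)\bar p(x)}$ hold; these follow from the continuity of $h\mapsto p(x,x+he_1)$ at $0$ exactly as in~\eqref{continuity of p}--\eqref{continuity of hhp}, with $r_0$ independent of $x$ by uniform continuity of $p$ on $\overline{B_R(0)}\times\overline{B_{2R}(0)}$. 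Then I would fix $r\in(0,r_0)$ and split $(1-s)J_{s,p(\cdot,\cdot)}(u)=(1-s)(A_r(s)+B_r(s))$, where $A_r(s)$ and $B_r(s)$ denote the double integrals over $\{|h|<r\}$ and $\{|h|\ge r\}$ respectively.

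For the tail, the integrand of $B_r(s)$ vanishes unless $x\in B_R(0)$ or $x+he_1\in B_R(0)$, so $|u(x+he_1)-u(x)|^{p(x,x+he_1)}\le\max\{(2\|u\|_\infty)^{p^-},(2\|u\|_\infty)^{p^+}\}$; bounding $|h|^{-1-sp(x,x+he_1)}\le\max\{|h|^{-1-sp^-},|h|^{-1-sp^+}\}$ and using Fubini gives $B_r(s)\le C(u,R,r)$ uniformly for $s\in[\tfrac12,1)$, so $(1-s)B_r(s)\to0$ as $s\nearrow1$. For the main term, I would insert the Taylor estimate into $A_r(s)$ and apply the elementary inequality $(a+b)^q\le(1+\varepsilon)a^q+C_\varepsilon b^q$ (valid for $a,b\ge0$, $q\in[p^-,p^+]$) with $q=p(x,x+he_1)$, which bounds the integrand by
\[
(1+\varepsilon)|\partial_1u(x)|^{p(x,x+he_1)}|h|^{-1+(1-s)p(x,x+he_1)}+C_\varepsilon C^{p(x,x+he_1)}|h|^{-1+(2-s)p(x,x+he_1)}.
\]
The second summand, integrated over $\{|h|<r\}$ (where the power of $|h|$ is at least $-1+(2-s)p^-$) and then over $x\in B_R(0)$, contributes at most $C(\varepsilon,u,R)\,r^{(2-s)p^-}$, which is $o(1)$ once multiplied by $(1-s)$.

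For the first summand, the two continuity estimates above give, for $0<r<r_0$,
\[
\int_{|h|<r}|\partial_1u(x)|^{p(x,x+he_1)}|h|^{-1+(1-s)p(x,x+he_1)}\,dh\le(1+\varepsilon)^2|\partial_1u(x)|^{\bar p(x)}\,\frac{2\,r^{(1-s)\bar p(x)}}{(1-s)\bar p(x)}.
\]
Multiplying by $(1-s)(1+\varepsilon)$, integrating over $x\in B_R(0)$, and using $r^{(1-s)\bar p(x)}\le1$ (since $r<1$), I obtain $(1-s)A_r(s)\le(1+\varepsilon)^3\int_{\R^N}\frac{2}{\bar p(x)}|\partial_1u|^{\bar p(x)}r^{(1-s)\bar p(x)}\,dx+o(1)$. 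Sending $s\nearrow1$ and invoking dominated convergence (as $r^{(1-s)\bar p(x)}\uparrow1$ while staying $\le1$), the right-hand side tends to $(1+\varepsilon)^3\int_{\R^N}\frac{2}{\bar p(x)}|\partial_1u|^{\bar p(x)}\,dx$; combined with $(1-s)B_r(s)\to0$ this shows $\limsup_{s\nearrow1}(1-s)J_{s,p(\cdot,\cdot)}(u)\le(1+\varepsilon)^3\int_{\R^N}\frac{2}{\bar p(x)}|\partial_1u|^{\bar p(x)}\,dx$, and letting $\varepsilon\to0$ yields~\eqref{BBMlimsup}.

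The main obstacle is structural, and the same as in Lemma~\ref{BBMinf}: the exponent $p(x,x+he_1)$ appears both as the power of the difference quotient and inside the kernel $|h|^{1+sp(x,x+he_1)}$, so the inner integral is not computable directly; the fix is to localize to small $|h|$ and trade $p(x,x+he_1)$ for $\bar p(x)=p(x,x)$ up to factors $1\pm\varepsilon$ by continuity, which forces the cutoff radius $r_0$ to be uniform in $x$ over $\supp u$. The one \emph{genuinely new} ingredient compared with the $\liminf$ lemma is the control of $\{|h|\ge r\}$: there one uses the boundedness and compact support of $u$ to bound $B_r(s)$ independently of $s$ near $1$, so the vanishing prefactor $(1-s)$ removes that part in the limit.
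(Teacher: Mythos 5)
Your proof is correct, but it follows a genuinely different route from the paper's. The paper establishes Lemma \ref{BBMsup} by first proving two auxiliary lemmas and then applying the Dominated Convergence Theorem: Lemma \ref{Dominate} produces an $L^1(\R^N)$ dominating function for the inner integral $F_s(x)=(1-s)\int_{\R}\frac{|u(x+he_1)-u(x)|^{p(x,x+he_1)}}{|h|^{1+sp(x,x+he_1)}}\,dh$, uniform in $s$ near $1$, while Lemma \ref{Dominate-convergence} shows the pointwise limit $F_s(x)\to\frac{2}{\bar p(x)}|\partial_1u|^{\bar p(x)}$; DCT then interchanges limit and integral. You instead work directly with the full double integral, split in $h$ at a fixed radius $r$, bound the far part $B_r(s)$ uniformly in $s$ via the compact support of $u$ so that $(1-s)B_r(s)\to 0$, and estimate the near part $A_r(s)$ with the Taylor bound, the elementary inequality $(a+b)^q\le(1+\varepsilon)a^q+C_\varepsilon b^q$, and the continuity estimates on $p$ near the diagonal. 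Both arguments hinge on the same ingredients (the Taylor expansion and the tacit log-H\"older-type control making $|h|^{p(x,x+he_1)-\bar p(x)}\approx 1$ for small $h$, which is exactly what the paper's estimates \eqref{continuity of hp}--\eqref{continuity of hhp} assert), so neither is more demanding of hypotheses. The paper's DCT route is modular and incidentally delivers the full limit, not just the $\limsup$, with the far-field tail in $x$ (where $u(x)=0$ but $u(x+he_1)\ne 0$) handled once inside the dominating-function lemma; your route is more self-contained and elementary, trading the $L^1$ dominant for a single uniform bound on $B_r(s)$, but yields only the upper bound, which is all the statement requires. One minor technical point: in the estimate of the second (error) summand of $A_r(s)$ you integrate over $x\in B_R(0)$, but the integrand is also nonzero for $x$ in an $r$-neighborhood of $B_R(0)$ where $\partial_1u(x)=0$; this set has bounded measure and the bound $C|h|^2$ still applies there, so the conclusion is unaffected, but the integration domain should formally be enlarged (say to $B_{R+1}(0)$) together with the region on which you invoke uniform continuity of $p$.
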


\begin{proof}
  The proof of this lemma follows by combining the Dominated Convergence Theorem with the two subsequent lemmas.
\end{proof}

\begin{lema}\label{Dominate}
   Let $u\in C^1_c(\R^N).$ Then there exists a function $F\in L^1(\R^N)$ such that
   $$
   F_s(x):=(1-s)\int_{\R}\frac{|u(x+he_1)-u(x)|^{p(x,x+he_1)}}{|h|^{1+s p(x,x+he_1)}}\, dh\leq F(x),\quad\text{for all}\ x\in \R^N.
   $$
\end{lema}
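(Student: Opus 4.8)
The plan is to exploit the compact support of $u$ to localize in the space variable, and to split the inner integral at $|h|=1$ so that the two regimes are handled by different mechanisms. Fix $R>0$ with $\supp(u)\subset B_R(0)$. Since this lemma is only invoked in the limit $s\nearrow1$, it suffices to exhibit a single $F\in L^1(\R^N)$ dominating $F_s$ for all $s$ in a fixed left-neighbourhood of $1$; accordingly, fix $s_0\in(1/p^-,1)$ — possible since $p^->1$ by \eqref{P} — and work with $s\in[s_0,1)$. Write $F_s=F_s^{\mathrm{near}}+F_s^{\mathrm{far}}$, where $F_s^{\mathrm{near}}$ collects the part of the $h$-integral over $|h|<1$ and $F_s^{\mathrm{far}}$ the part over $|h|\ge1$.

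For $F_s^{\mathrm{near}}$ I would use the mean value inequality $|u(x+he_1)-u(x)|\le\|\nabla u\|_\infty|h|$, which bounds the integrand by $\|\nabla u\|_\infty^{p}|h|^{(1-s)p-1}$ with $p=p(x,x+he_1)$. Since $|h|<1$ and $p\ge p^-$, we have $|h|^{(1-s)p-1}\le|h|^{(1-s)p^--1}$, and $\|\nabla u\|_\infty^{p}\le\max\{\|\nabla u\|_\infty^{p^-},\|\nabla u\|_\infty^{p^+}\}$; the elementary computation $\int_{|h|<1}|h|^{(1-s)p^--1}\,dh=\frac{2}{(1-s)p^-}$ cancels exactly the prefactor $(1-s)$, so that $F_s^{\mathrm{near}}(x)\le\frac{2}{p^-}\max\{\|\nabla u\|_\infty^{p^-},\|\nabla u\|_\infty^{p^+}\}$, uniformly in $s$. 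As $u(x+he_1)-u(x)=0$ unless $x\in B_{R+1}(0)$ whenever $|h|<1$, we conclude $F_s^{\mathrm{near}}\le C\,\chi_{B_{R+1}(0)}\in L^1(\R^N)$.

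For $F_s^{\mathrm{far}}$ I would discard the harmless factor $(1-s)\le1$ and use $|u(x+he_1)-u(x)|^{p}\le 2^{p^+-1}\big(|u(x+he_1)|^{p}+|u(x)|^{p}\big)$, estimating the two terms separately and always replacing the kernel by $|h|^{-1-s_0p^-}$, which is legitimate for $|h|\ge1$, $s\ge s_0$, $p\ge p^-$. The $|u(x)|^{p}$ term vanishes off $B_R(0)$ and there $\int_{|h|\ge1}|h|^{-1-s_0p^-}\,dh<\infty$, contributing $\le C\,\chi_{B_R(0)}$. The $|u(x+he_1)|^{p}$ term vanishes unless $x+he_1\in B_R(0)$; writing $x=(x_1,x')$, this forces $|x'|\le R$ and, together with $|h|\ge1$, $|h|\ge\max\{1,|x_1|-R\}$, so its contribution is $\le C\,\chi_{\{|x'|\le R\}}(x)\,\max\{1,|x_1|-R\}^{-s_0p^-}$, and this function is integrable over $\R^N$ exactly because $s_0p^->1$. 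Adding the three pieces yields the claimed majorant $F\in L^1(\R^N)$.

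I expect the only genuine difficulty to be this last, long-range term: bounding the far-field kernel using merely $p>1$ produces a tail $\max\{1,|x_1|-R\}^{-sp^-}$ that need not be integrable in $x_1$ when $sp^-\le1$; the resolution is precisely that a majorant is only needed near $s=1$, where $sp^->1$ is automatic once $s>1/p^-$. The remaining work is elementary bookkeeping with the variable exponent, the only point of care being that for monomials in $|h|$ one must use $p^-$ (with the appropriate direction of the inequality, different for $|h|<1$ and $|h|\ge1$), while for fixed positive quantities raised to the power $p(x,x+he_1)$ one takes the maximum of the $p^-$ and $p^+$ powers.
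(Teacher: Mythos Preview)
Your proof is correct and follows essentially the same strategy as the paper: split the inner integral at $|h|=1$, control the near part via the mean-value inequality, and control the far part via boundedness of $u$ together with the kernel's decay and the compact support. The paper organises the casework by first splitting on $|x_1|\le 2R$ versus $|x_1|>2R$ (so that $u(x)=0$ in the second case) rather than on $h$, which yields the slightly sharper tail $(2/|x_1|)^{1+sp^-}$ and makes your restriction $s_0>1/p^-$ unnecessary, but the underlying ideas are the same.
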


\begin{proof}
  Since $u\in C^1_c(\R^N),$ there exists $R>1$ such that $supp(u)\subset Q_R,$ where $Q_R=[-R,R]^N.$ If $|x_1|\leq 2R,$
  \begin{align*}
    |F_s(u)| & =(1-s)\left(\int_{|h|\leq 1}+\int_{|h|>1}\right)\frac{|u(x+he_1)-u(x)|^{p(x,x+he_1)}}{|h|^{1+s p(x,x+he_1)}}\, dh:=I_1+I_2.
  \end{align*}
  \begin{align*}
    |I_1| &\leq (1-s)\int_{|h|\leq 1}\|\partial_{1}u\|_{\infty}^{p(x,x+he_1)}|h|^{-1+(1-s)p(x,x+he_1)}dh\\
    &\leq (1-s)\left[\|\partial_{1}u\|_{\infty}^{p^-}+\|\partial_{1}u\|_{\infty}^{p^+}\right]\int_{|h|\leq 1}|h|^{-1+(1-s)p^-}dh\\
    &=\frac{\alpha_N}{p^-}\left[\|\partial_{1}u\|_{\infty}^{p^-}+\|\partial_{1}u\|_{\infty}^{p^+}\right],
  \end{align*}
  where $\alpha_N=|B(0,1)|.$
  We used in the previous inequality
  $$
  |u(x+he_1))-u(x)|\leq \int_{0}^{1}|\partial_{1}u(x+the_1).he_1|dt\leq \|\partial_{1}u\|_{\infty}|h|.
  $$
  Also
  \begin{align*}
   |I_2|  &\leq 2\left[\|u\|_{\infty}^{p^-}+\|u\|_{\infty}^{p^+}\right]\int_{|h|> 1}|h|^{-1-sp^-}dh\\
   &=\frac{2}{sp^-}\left[\|\partial_{1}u\|_{\infty}^{p^-}+\|\partial_{1}u\|_{\infty}^{p^+}\right].
  \end{align*}
  If $|x_1|>2R,$ we denote by $x=(x_1,x^{'})$ with $x^{'}\in\R^{N-1},$ then
  $$
  F_s(x)=(1-s)\int_{\R}\frac{|u(x_1+h,x')-u(x)|^{p(x,(x_1+h,x'))}}{|h|^{1+s p(x,(x_1+h,x'))}}\, dh.
  $$
  Since $|x_1|>2R,$ we use that $|h|\geq |x_1|-|x_1+h|\geq |x_1|-R\geq \frac{1}{2}|x_1|,$ we get
  $$
  | F_s(x)|\leq \left(\frac{2}{|x_1|}\right)^{1+sp^-}\int_{\R}|u(x_1,x')|^{p_1(x,(x_1,x'))}dx_1.
  $$
  Observe that
  $$
  G(x'):=\int_{\R}|u(x_1,x')|^{p_1(x,(x_1,x'))}dx_1\in L^1(\R^{N-1}).
  $$
  Therefore
  $$
   | F_s(x)|\leq C\left(\chi_{Q_{2R}}(x)+\chi_{\{|x_1|\geq 2R\}}(x)\left(\frac{2}{|x_1|}\right)^{1+sp^-}G(x')\right)\in L^1(\R^N),
  $$
  where $C$ is a constant depending on $N,p^-,u.$
\end{proof}

\begin{lema}\label{Dominate-convergence}
  Let $u\in C^2_c(\R^N).$ Then
   $$
    \lim_{s\nearrow1} F_s(x):=\frac{2}{\overline{p}(x)}|\partial_{1}u|^{\bar{p}(x)}.
   $$
\end{lema}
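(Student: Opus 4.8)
The plan is to compute the pointwise limit of
$$
F_s(x) = (1-s)\int_{\R}\frac{|u(x+he_1)-u(x)|^{p(x,x+he_1)}}{|h|^{1+sp(x,x+he_1)}}\,dh
$$
for a fixed $x\in\R^N$. For $x$ outside the support of $u$ (more precisely, once $|x_1|$ is large), the bound obtained in Lemma \ref{Dominate} shows $F_s(x)\le C|x_1|^{-1-sp^-}G(x')\to 0$, and since $|\partial_1 u(x)|=0$ there, the claimed identity holds trivially. So the substance is the case where $x$ is near the support.

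The main step is a splitting argument: fix $\varepsilon>0$ and split the $h$-integral into $|h|<r$ and $|h|\ge r$, where $r=r(x,\varepsilon)$ is small enough that the three continuity estimates \eqref{continuity of p}, \eqref{continuity of hp}, \eqref{continuity of hhp} for the exponent $p(x,x+he_1)$ near $\bar p(x)$ all hold. On $|h|\ge r$ the integrand is bounded (using $|u(x+he_1)-u(x)|\le 2\|u\|_\infty$ and $|h|^{-1-sp(x,x+he_1)}\le \max(|h|^{-1-sp^-},|h|^{-1-sp^+})$), so that piece is $O((1-s))$ and vanishes as $s\nearrow 1$. On $|h|<r$ I would use the $C^2$ estimate \eqref{BBMinf1}, $|u(x+he_1)-u(x)-\partial_1 u(x)h|\le C|h|^2$, together with the elementary two-sided inequality $(1-\varepsilon)a^q - C(q,\varepsilon)b^q \le (a+b)^q \le (1+\varepsilon)a^q + C(q,\varepsilon)b^q$ to sandwich $|u(x+he_1)-u(x)|^{p(x,x+he_1)}$ between $(1\pm\varepsilon)|\partial_1 u(x)\cdot h|^{p(x,x+he_1)}$ plus an error of order $|h|^{2p(x,x+he_1)}$. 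The error term integrates to something of order $(1-s)r^{(2-s)p^-}/((2-s)p^-)\to 0$. The main term, after invoking the three continuity estimates, becomes, up to factors $(1\pm\varepsilon)$,
$$
(1-s)|\partial_1 u(x)|^{\bar p(x)}\int_{|h|<r}|h|^{-1+(1-s)\bar p(x)}\,dh
= \frac{|\partial_1 u(x)|^{\bar p(x)}}{\bar p(x)}\,2\,r^{(1-s)\bar p(x)},
$$
which tends to $\dfrac{2}{\bar p(x)}|\partial_1 u(x)|^{\bar p(x)}$ as $s\nearrow 1$, since $r^{(1-s)\bar p(x)}\to 1$.

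Combining the two-sided bounds, one gets
$$
\frac{2(1-\varepsilon)^c}{\bar p(x)}|\partial_1 u(x)|^{\bar p(x)}\le \liminf_{s\nearrow1}F_s(x)\le \limsup_{s\nearrow1}F_s(x)\le \frac{2(1+\varepsilon)^c}{\bar p(x)}|\partial_1 u(x)|^{\bar p(x)}
$$
for a fixed small integer power $c$ (coming from how many of the $(1\pm\varepsilon)$ factors appear), and letting $\varepsilon\to 0$ yields the claim. The one point requiring a little care is that $r$ depends on $x$: this is harmless for a \emph{pointwise} limit since $x$ is fixed throughout, but I would note it explicitly so that the later application of dominated convergence (Lemma \ref{Dominate}) is not conflated with any uniformity in $x$. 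The main obstacle is bookkeeping — keeping the finitely many $(1\pm\varepsilon)$ factors and the $|h|^{2p}$-error under control simultaneously — rather than any conceptual difficulty; essentially the lower bound is already contained in the computation \eqref{frombelow} of Lemma \ref{BBMinf}, and the upper bound is the mirror image.
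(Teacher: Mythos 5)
Your proposal is correct and follows essentially the same route as the paper: a second-order Taylor estimate for $u$, a split of the $h$-integral, vanishing of the error and far-field pieces under the $(1-s)$ factor, and extraction of the main term via the continuity estimates \eqref{continuity of p}--\eqref{continuity of hhp}, followed by letting $\varepsilon\to 0$. The only cosmetic differences are that you split at $|h|<r(x,\varepsilon)$ rather than at $|h|\le 1$ (arguably cleaner, since the continuity estimates are only guaranteed on a small ball) and you spell out the lower bound via the two-sided inequality, whereas the paper simply refers back to \eqref{frombelow}.
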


\begin{proof}
  Let $u$ belongs to the space $C^2_c(\mathbb{R}^N)$. In this case, there exists a positive constant $C$ that depends on the norm $|u|_{C^2(\mathbb{R}^N)}$, satisfying the condition:

  \begin{align*}
     |u(x+he_1)-u(x)|^{p(x,x+he_1)}\leq (1+\varepsilon)|\partial_{1}u|^{p(x,x+he_1)}|h|^{p_1(x,x+he_1)}+ C|h|^{2p(x,x+he_1)}
  \end{align*}

  for all $x\in\R^N$ and $h\in\R.$ From the preceding inequality, we obtain
  \begin{align*}
    A(h)&:=\left|\frac{ |u(x+he_1)-u(x)|^{p(x,x+he_1)}}{|h|^{1+sp(x,x+he_1)}}-(1+\varepsilon)|\partial_{1}u|^{p(x,x+he_1)}|h|^{-1+(1-s)p(x,x+he_1)}\right|\\
    &\leq C|h|^{-1+(2-s)p(x,x+he_1)}.
  \end{align*}
 By  performing the integration over $\R$, we arrive at the following expression:
  \begin{equation}\label{h-leq1+h-geq1}
    \int_{\R}A(h)\,dh=\left(\int_{|h|\leq1}+\int_{|h|>1}\right)A(h)\,dh.
  \end{equation}
 In the initial integral on the right side, we observe
  \begin{align*}
    \int_{|h|\leq1}A(h)\,dh &\leq C  \int_{|h|\leq1}|h|^{-1+(2-s)p(x,x+he_1)}\,dh\\
    &\leq C(1+\varepsilon)\int_{|h|\leq1}|h|^{-1+(2-s)\overline{p}(x)}\,dh=\frac{2C}{(2-s)\overline{p}(x)},
  \end{align*}
  then
  $$
  \lim_{s\nearrow1}(1-s)\int_{|h|\leq1}A(h)\,dh=0,
  $$
  that is

  \begin{equation}\label{h-leq1}
  \begin{aligned}
    &\lim_{s\nearrow1}(1-s)\int_{|h|\leq1}\frac{ |u(x+he_1)-u(x)|^{p(x,x+he_1)}}{|h|^{1+sp(x,x+he_1)}}\,dh\\
    &=\lim_{s\nearrow1}(1-s)(1+\varepsilon)|\partial_{1}u|^{p(x,x+he_1)}\int_{|h|\leq1}|h|^{-1+(1-s)p(x,x+he_1)}\,dh\\
    &\leq \lim_{s\nearrow1}2(1-s)(1+\varepsilon)|\partial_{1}u|^{p(x,x+he_1)}\int_{|h|\leq1}|h|^{-1+(1-s)\overline{p}(x)}\,dh\\
    &=(1+\varepsilon)\frac{2}{\overline{p}(x)}|\partial_{1}u|^{p(x,x+he_1)}.
  \end{aligned}
  \end{equation}

  For in the subsequent integral on the right side of \eqref{h-leq1+h-geq1}, we find
  \begin{align*}
    \int_{|h|>1}A(h)\,dh & \leq C2^{p^+}\max\left(\|u\|_{\infty}^{p^-},\|u\|_{\infty}^{p^+}\right)\int_{|h|>1}\frac{dh}{|h|^{1+sp(x,x+he_1)}}\\
    &=\frac{C2^{p^+}}{sp^-}\max\left(\|u\|_{\infty}^{p^-},\|u\|_{\infty}^{p^+}\right),
  \end{align*}
  then
  $$
  \lim_{s\nearrow1}(1-s)\int_{|h|>1}A(h)\,dh=0,
  $$
   and as illustrated in \eqref{h-leq1}, we can see that

  \begin{equation}\label{h-geq1}
  \begin{aligned}
  \lim_{s\nearrow1}(1-s)\int_{|h|\leq1}\frac{ |u(x+he_1)-u(x)|^{p(x,x+he_1)}}{|h|^{1+sp(x,x+he_1)}}\,dh\leq(1+\varepsilon)\frac{2}{\overline{p}(x)}|\partial_{1}u|^{p(x,x+he_1)}.
  \end{aligned}
  \end{equation}

  Therefore, by combining \eqref{h-leq1} and \eqref{h-geq1} and since $\varepsilon$ is arbitrarily, we derive the following:
  \begin{equation}
    \lim_{s\nearrow1}(1-s)F_s(x)\leq \frac{2}{\overline{p}(x)}|\partial_{1}u|^{p(x,x+he_1)}.
  \end{equation}
  Upon examining \eqref{frombelow}, we have also
  \begin{equation}
    \lim_{s\nearrow1}(1-s)F_s(x)\geq \frac{2}{\overline{p}(x)}|\partial_{1}u|^{p(x,x+he_1)},
  \end{equation}
  and the proof is completed.
\end{proof}

\begin{proof}[{\bf Proof of Theorem \ref{BBM}}]
  The proof can be derived by combining both Lemma \ref{BBMinf} and Lemma \ref{BBMsup}.
\end{proof}

\subsection{Eigenvalue problems: Isotropic case}

The fractional $p(x)$-Laplacian $(-\triangle)^s_{p(.,.)}$ is given by
$$
(-\triangle)^s_{p(.,.)}u(x):=P.V.\int_{\mathbb{R}^N}\frac{|u(x)-u(y)|^{p(x,y)-2}(u(x)-u(y))}{|x-y|^{N+sp(x,y)}}dy,\ \text{for all}\ x\in\Omega.
$$
$P.V.$ is a commonly used abbreviation in the principal value sense.\\

The eigenvalue problem associated to $(-\triangle)^s_{p(.,.)}$ is the following:
\begin{equation}\label{classic-eq1}
\begin{cases}
  (-\triangle)^s_{p(.,.)}u=\Lambda |u|^{\bar{p}-2}u & \mbox{in } \Omega \\
  u=0 & \mbox{in}\ \R^N\backslash\Omega.
\end{cases}
\end{equation}
The natural variational approach to study problem \eqref{classic-eq1} is to consider critical points of the non-homogeneous Rayleigh type quotient
\begin{equation}\label{non-homogeneous-Rayleigh-quotient}
 R_s(u)= \frac{\displaystyle\int_{\R^N}\int_{\R^N}\displaystyle\frac{|u(x)-u(y)|^{p(x,y)}}{|x-y|^{N+sp(x,y)}}dxdy}{\displaystyle
 \int_{\Omega}|u(x)|^{\bar{p}(x)}dx}.
\end{equation}

When looking in the literature, no prior studies have delved into the exploration of problems associated with the fractional $p(x)-$Laplacian, offering a comprehensive examination of its spectrum. However, we posit that addressing this issue may not be overly challenging, as a parallel investigation to that of the $p(x)-$Laplacian could be undertaken. Nevertheless, a notable complication arises in tackling the eigenvalue problem \eqref{classic-eq1} due to its lack of homogeneity. This difficulty manifests particularly when dealing with operators featuring nonstandard growth. Specifically, eigenvalues in equation \eqref{non-homogeneous-Rayleigh-quotient} are contingent on the magnitude of $\int_{\Omega}|u(x)|^{\bar{p}(x)}dx=\mu,$ wherein distinct constraints may yield disparate eigenvalues.

An innovative approach, as proposed by Franzina and Lindqvist in \cite{Franzina-Lindqvist}, involves incorporating norms into the Rayleigh-type quotient instead of modulars. The overarching objective is to introduce homogeneity into the analysis, thereby mitigating the challenges associated with nonstandard growth operators. The focus of their investigation centered on the local $p(x)-$Laplacian. In contrast, we introduce an exploration into a nonlocal, homogeneous Rayleigh-type quotient, expanding the scope of the study. We propose the study of the following quotient:
\begin{equation}\label{homogeneous-Rayleigh-quotient}
  H_s(u):=\frac{K(u)}{k(u)},
\end{equation}
where $K(u)=[u]_{s,p(.,.)}$ and $k(u)=\|u\|_{\bar{p},\Omega}$ (Refer to Section 2 for definitions).
Notice that the homogeneity of the quotient in \eqref{homogeneous-Rayleigh-quotient} implies that eigenvalues are independent of the energy level chosen to perform the minimization, we can assume $\|u\|_{\bar{p},\Omega}=1.$\\

We are led to consider the following operator
$$
\mathcal{L}u(x):= P.V.\int_{\mathbb{R}^N}\frac{|u(x)-u(y)|^{p(x,y)-2}(u(x)-u(y))}{K(u)^{p(x,y)-1}|x-y|^{N+sp(x,y)}}dy,\ \text{for all}\ x\in\Omega.
$$
We consider the homogeneous counterpart of \eqref{classic-eq1}:
\begin{equation}\label{eq1}
  \begin{cases}
  \mathcal{L}u=\Lambda \left|\frac{u}{k(u)}\right|^{\bar{p}-2}\frac{u}{k(u)} & \mbox{in } \Omega \\
  u=0 & \mbox{in}\ \R^N\backslash\Omega.
  \end{cases}
\end{equation}

{\bf The Euler-Lagrange equation:}\\

We define
$$
\Lambda_1=\inf_{v\in X\backslash\{0\}} H_s(u).
$$

\begin{prop}\label{minimum}
  There exists a non-negative minimizer $u\in X\backslash\{0\},$ of $\Lambda_1$.
\end{prop}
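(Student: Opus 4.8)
The plan is to use the direct method in the calculus of variations to produce a minimizer of the homogeneous Rayleigh quotient $H_s$. First I would observe that since $H_s$ is $0$-homogeneous, i.e. $H_s(tu) = H_s(u)$ for all $t \neq 0$, and since $k(u) = \|u\|_{\bar p, \Omega}$ is a norm, we may restrict attention to the constraint set $\mathcal{M} = \{v \in X : \|v\|_{\bar p,\Omega} = 1\}$ and minimize $K(u) = [u]_{s,p(\cdot,\cdot),\R^N} = \|u\|_X$ there. Thus $\Lambda_1 = \inf_{v\in\mathcal{M}} \|v\|_X$. This infimum is finite and nonnegative; it is in fact strictly positive because by Theorem \ref{compact-embedding} (with $r = \bar p$, which satisfies $\bar p(x) = p(x,x) < p^*_s(x)$ since $s\bar p(x) < N$ follows from \eqref{P}) we have $\|v\|_{\bar p,\Omega} \le C\|v\|_X$, so $1 = \|v\|_{\bar p,\Omega} \le C\|v\|_X$ forces $\|v\|_X \ge 1/C > 0$ on $\mathcal M$.

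Next I would take a minimizing sequence $\{u_n\} \subset \mathcal{M}$ with $\|u_n\|_X \to \Lambda_1$. Since this sequence is bounded in $X$ and $X$ is reflexive (as stated in the excerpt), after passing to a subsequence we have $u_n \rightharpoonup u$ weakly in $X$. By the compact embedding $X \hookrightarrow\hookrightarrow L^{\bar p(\cdot)}(\Omega)$ from Theorem \ref{compact-embedding}, we get $u_n \to u$ strongly in $L^{\bar p(\cdot)}(\Omega)$, hence $\|u\|_{\bar p,\Omega} = \lim \|u_n\|_{\bar p,\Omega} = 1$, so in particular $u \neq 0$ and $u \in \mathcal M$. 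By weak lower semicontinuity of the norm $\|\cdot\|_X$ (equivalently, one can argue via the convexity and lower semicontinuity of the modular $\rho_{p(\cdot,\cdot)}$ together with Fatou's lemma, using the norm-modular relations in the Lemma on $\rho_{p(\cdot,\cdot)}$), we obtain $\|u\|_X \le \liminf_n \|u_n\|_X = \Lambda_1$. Combined with $u \in \mathcal M$ and the definition of $\Lambda_1$, this gives $\|u\|_X = \Lambda_1$, so $u$ is a minimizer.

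Finally, to arrange that the minimizer is non-negative, I would note that $K$ and $k$ are both even and, more importantly, that replacing $u$ by $|u|$ does not increase $K$: indeed $\big||u|(x) - |u|(y)\big| \le |u(x) - u(y)|$ pointwise, so $\rho_{p(\cdot,\cdot)}(|u|) \le \rho_{p(\cdot,\cdot)}(u)$ and hence $[|u|]_{s,p(\cdot,\cdot),\R^N} \le [u]_{s,p(\cdot,\cdot),\R^N}$ by the norm-modular relation (part (i) of the Lemma), while $\||u|\|_{\bar p,\Omega} = \|u\|_{\bar p,\Omega} = 1$. Therefore $|u| \in \mathcal M$ and $\|\,|u|\,\|_X \le \|u\|_X = \Lambda_1$, so $|u|$ is also a minimizer; rename it $u$. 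Note one must check $|u|$ still lies in $X$, i.e. vanishes on $\Omega^c$, which is clear since $u$ does.

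The main obstacle is the weak lower semicontinuity of $K(u) = \|u\|_X$ along the minimizing sequence: the seminorm is defined through a Luxemburg-type infimum attached to the variable-exponent modular $\rho_{p(\cdot,\cdot)}$, so one cannot simply invoke "norms are weakly lower semicontinuous" without knowing $\|\cdot\|_X$ is genuinely a norm on the reflexive Banach space $X$ (which the excerpt does assert). The cleanest route is: weak convergence in $X$ implies, along a further subsequence, a.e. convergence of the difference quotients, and then Fatou gives $\rho_{p(\cdot,\cdot)}(u) \le \liminf \rho_{p(\cdot,\cdot)}(u_n)$; translating this modular inequality into the desired inequality $\|u\|_X \le \liminf \|u_n\|_X$ via the norm-modular relations completes the argument. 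A secondary technical point worth spelling out is the strict positivity of $\Lambda_1$, needed so that $H_s$ is well-defined and the problem is non-degenerate, but this follows immediately from the Sobolev inequality \eqref{Sobolev-inequality} as indicated above.
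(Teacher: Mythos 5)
Your argument is correct and follows essentially the same route as the paper: strict positivity of $\Lambda_1$ via the Sobolev inequality, a minimizing sequence normalized on $\mathcal{M}$, weak convergence in the reflexive space $X$ together with the compact embedding into $L^{\bar p(\cdot)}(\Omega)$ to pass to a limit $u\neq 0$, weak lower semicontinuity of the seminorm, and then replacing $u$ by $|u|$ to get non-negativity. You fill in a few details the paper leaves implicit (the pointwise inequality $\bigl||u|(x)-|u|(y)\bigr|\le|u(x)-u(y)|$ behind the passage to $|u|$, and the Fatou-type justification of lower semicontinuity), but the underlying proof is the same.
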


\begin{proof}
  The Sobolev inequality \eqref{Sobolev-inequality} shows that $\Lambda_1>0.$ Let $(v_n)_{n\in\mathbb{N}}$ be a minimizing sequence of $\Lambda_1$ such that $ \|v_n\|_{\bar{p},\Omega}=1.$ Then
  $$
  \Lambda_1=\lim_{n\to\infty}[v_n]_{s,p(.,.)}.
  $$
  By Theorem \ref{compact-embedding}, we can extract a subsequence still denoted $(v_n)_{n\in\mathbb{N}}$ and find a function $u\in X$ such that
  $$
  v_n \to u\quad\text{strongly in}\quad L^{\bar{p}(.)}(\Omega),
  $$
  $$
  v_n \rightharpoonup u\quad\text{weakly in}\quad X.
  $$
  Since the norm is weakly sequentially lower semi-continuous, we obtain
  $$
   \frac{[u]_{s,p(.,.)}}{\|u\|_{\bar{p},\Omega}}\leq \liminf_{n\to\infty}\frac{[v_n]_{s,p(.,.)}}{\|v_n\|_{\bar{p},\Omega}}=\Lambda_1.
   $$
   Therefore, $u$ serves as a minimizer, and the same holds true for $|u|$. With this, the proof is complete.

\end{proof}

To obtain the Euler-Lagrange equation for the minimizer, we select an arbitrary test function $\varphi\in C_0^\infty(\Omega)$. Next, we examine the competing function $v_\varepsilon(x) = u(x) + \varepsilon \varphi(x)$. We observe that a necessary condition for inequality
$$
\frac{K(u)}{k(u)}\leq \frac{K(v_\varepsilon)}{k(v_\varepsilon)},
$$
which gives that
$$
\frac{d}{d\varepsilon}\left( \frac{K(v_\varepsilon)}{k(v_\varepsilon)}\right)_{\varepsilon=0}=0.
$$
Thus the necessary condition reads
\begin{equation}\label{necessary condition}
\frac{1}{K(u)}\left( \frac{d}{d\varepsilon} K(v_\varepsilon)\right)_{\varepsilon=0}=
\frac{1}{k(u)}\left( \frac{d}{d\varepsilon} k(v_\varepsilon)\right)_{\varepsilon=0}.
\end{equation}
Before proceeding with our studies, let us compute the preceding identities.
\begin{prop}\label{differentiation}
Let $u\in X$ and $\varphi\in C_0^\infty(\Omega)$. Then,
  $$
  \left( \frac{d}{d\varepsilon} K(v_\varepsilon)\right)_{\varepsilon=0}=\frac{\int_{\R^N}\int_{\R^N} \left|\frac{\nabla_s u}{K(u)}\right|^{p(x,y)-2}\frac{\nabla_s u}{K(u)}\nabla_s \varphi \frac{dxdy}{|x-y|^N}}{\int_{\R^N}\int_{\R^N}\left|\frac{\nabla_s u}{K(u)}\right|^{p(x,y)}\frac{dxdy}{|x-y|^N}}
  $$
  and
  $$
  \left( \frac{d}{d\varepsilon} k(v_\varepsilon)\right)_{\varepsilon=0}=\frac{\int_{\Omega}\left|\frac{u}{k(u)}\right|^{\bar{p}(x)-2}\frac{u}{k(u)}\varphi dx}{\int_{\Omega}\left|\frac{u}{k(u)}\right|^{\bar{p}(x)}dx},
  $$
   where
  $\nabla_s u:=\frac{u(x)-u(y)}{|x-y|^s}.$
\end{prop}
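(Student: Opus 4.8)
The plan is to derive both identities by implicit differentiation of the equations that \emph{define} the two Luxemburg (semi)norms. We may assume $u\neq 0$, as the statement is otherwise vacuous. Since $\varphi\in C_0^\infty(\Omega)\subset X$, the competitor $v_\varepsilon=u+\varepsilon\varphi$ lies in $X$ for every $\varepsilon$, and $v_\varepsilon\neq 0$ for $|\varepsilon|$ small; hence $\lambda(\varepsilon):=K(v_\varepsilon)=[v_\varepsilon]_{s,p(\cdot,\cdot)}$ and $\mu(\varepsilon):=k(v_\varepsilon)=\|v_\varepsilon\|_{\bar p,\Omega}$ are well defined and positive there. By the characterization of the Luxemburg (semi)norm through its modular (Section 2), for all such $\varepsilon$ one has $G(\varepsilon,\lambda(\varepsilon))=1$ and $m(\varepsilon,\mu(\varepsilon))=1$, where
\[
G(\varepsilon,\lambda)=\int_{\R^N}\!\int_{\R^N}\frac{|v_\varepsilon(x)-v_\varepsilon(y)|^{p(x,y)}}{\lambda^{p(x,y)}|x-y|^{N+sp(x,y)}}\frac{dxdy}{p(x,y)},\qquad
m(\varepsilon,\mu)=\int_\Omega\frac{|v_\varepsilon(x)|^{\bar p(x)}}{\mu^{\bar p(x)}}\frac{dx}{\bar p(x)}.
\]

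First I would show that $G$ and $m$ are $C^1$ in a neighbourhood of $(0,K(u))$ and $(0,k(u))$, with $\partial_\lambda G<0$ and $\partial_\mu m<0$ at those points. The one delicate point is differentiation under the integral sign for $G$: the $\varepsilon$-derivative of its integrand is
\[
\frac{|v_\varepsilon(x)-v_\varepsilon(y)|^{p(x,y)-2}(v_\varepsilon(x)-v_\varepsilon(y))(\varphi(x)-\varphi(y))}{\lambda^{p(x,y)}|x-y|^{N+sp(x,y)}},
\]
and Young's inequality with conjugate exponents $p(x,y),p'(x,y)$, together with $|\varphi(x)-\varphi(y)|\le\|\nabla\varphi\|_\infty\min\{|x-y|,2\,\mathrm{diam}\,\supp\varphi\}$, dominates it — uniformly for $\varepsilon$ in a fixed small interval and $\lambda$ bounded away from $0$ — by a constant times $\dfrac{|v_\varepsilon(x)-v_\varepsilon(y)|^{p(x,y)}+|\varphi(x)-\varphi(y)|^{p(x,y)}}{|x-y|^{N+sp(x,y)}}$, which belongs to $L^1(\R^N\times\R^N)$ because $v_\varepsilon\in W^{s,\bar p,p(\cdot,\cdot)}(\R^N)$ with an $\varepsilon$-uniform bound (as $v_\varepsilon=u+\varepsilon\varphi$ with $u,\varphi\in X$) and $\varphi\in C_0^\infty(\Omega)\subset X$. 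The $\lambda$-derivative of $G$ and the two derivatives of $m$ are handled the same way, more easily. This yields
\[
\partial_\varepsilon G=\int\!\!\int\frac{|v_\varepsilon(x)-v_\varepsilon(y)|^{p-2}(v_\varepsilon(x)-v_\varepsilon(y))(\varphi(x)-\varphi(y))}{\lambda^{p}|x-y|^{N+sp}}dxdy,\quad
\partial_\lambda G=-\frac1\lambda\int\!\!\int\frac{|v_\varepsilon(x)-v_\varepsilon(y)|^{p}}{\lambda^{p}|x-y|^{N+sp}}dxdy,
\]
together with $\partial_\varepsilon m=\int_\Omega|v_\varepsilon|^{\bar p-2}v_\varepsilon\varphi\,\mu^{-\bar p}dx$ and $\partial_\mu m=-\mu^{-1}\int_\Omega|v_\varepsilon|^{\bar p}\mu^{-\bar p}dx$; note that the weights $1/p(x,y)$ and $1/\bar p(x)$ cancel throughout.

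Next, the implicit function theorem produces $C^1$ solutions $\varepsilon\mapsto\lambda(\varepsilon)$ and $\varepsilon\mapsto\mu(\varepsilon)$ of $G(\varepsilon,\cdot)=1$ and $m(\varepsilon,\cdot)=1$ near $\varepsilon=0$; by strict monotonicity of $\lambda\mapsto G(\varepsilon,\lambda)$ and $\mu\mapsto m(\varepsilon,\mu)$ these coincide with $K(v_\varepsilon)$ and $k(v_\varepsilon)$, which are therefore differentiable at $0$, with
\[
\Big(\tfrac{d}{d\varepsilon}K(v_\varepsilon)\Big)_{\varepsilon=0}=-\frac{\partial_\varepsilon G}{\partial_\lambda G}\bigg|_{(0,K(u))},\qquad
\Big(\tfrac{d}{d\varepsilon}k(v_\varepsilon)\Big)_{\varepsilon=0}=-\frac{\partial_\varepsilon m}{\partial_\mu m}\bigg|_{(0,k(u))}.
\]
Substituting the formulas above (with $v_0=u$, $\lambda(0)=K(u)$, $\mu(0)=k(u)$), multiplying numerator and denominator by $K(u)$, resp.\ $k(u)$, and using $\frac{|u(x)-u(y)|^{q}}{|x-y|^{sq}}=|\nabla_s u|^{q}$ and $\frac{(u(x)-u(y))(\varphi(x)-\varphi(y))}{|x-y|^{2s}}=\nabla_s u\,\nabla_s\varphi$ to recast the kernels as $\frac{dxdy}{|x-y|^N}$, one lands exactly on the two asserted identities.

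The step I expect to be the main obstacle is precisely the construction of the $\varepsilon$-uniform $L^1(\R^N\times\R^N)$ majorant for the $\varepsilon$-derivative of the nonlocal integrand of $G$: since $p(\cdot,\cdot)$ is merely continuous and the kernel is singular on the diagonal, one must combine the pointwise Lipschitz control of $\varphi$ near the diagonal with Young's inequality for variable exponents and the (uniform) finiteness of the Gagliardo modular of $v_\varepsilon$. Once this domination is secured, the remainder — the implicit function theorem, the evaluation at $\varepsilon=0$, and the algebraic rewriting — is routine.
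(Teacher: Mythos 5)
Your proposal is correct, and it takes a genuinely different route from the paper's. The paper does not invoke the implicit function theorem. Instead it starts from the two modular identities
\[
\int\!\!\int\left|\frac{\nabla_s v_\varepsilon}{K(v_\varepsilon)}\right|^{p(x,y)}\frac{dxdy}{p(x,y)|x-y|^N}=1,\qquad
\int\!\!\int\left|\frac{\nabla_s u}{K(u)}\right|^{p(x,y)}\frac{dxdy}{p(x,y)|x-y|^N}=1,
\]
subtracts them, and applies the fundamental-theorem-of-calculus representation
$|b|^p-|a|^p=p(b-a)\int_0^1|a+t(b-a)|^{p-2}[a+t(b-a)]\,dt$
pointwise with $b=\nabla_s v_\varepsilon/K(v_\varepsilon)$ and $a=\nabla_s u/K(u)$. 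After splitting $b-a$ using $v_\varepsilon=u+\varepsilon\varphi$ it isolates the difference quotient $(K(v_\varepsilon)-K(u))/\varepsilon$ and passes to the limit $\varepsilon\to0^+$ through the integral, using only that the inner $t$-average $A(x,\varepsilon)$ converges a.e.\ to $|\nabla_s u/K(u)|^{p-2}\nabla_s u/K(u)$. For the $k$-derivative the paper simply cites Franzina--Lindqvist [Lemma A.1]. Your implicit-function-theorem route is more systematic and treats $K$ and $k$ symmetrically, and it delivers as a by-product the $C^1$ dependence $\varepsilon\mapsto K(v_\varepsilon)$ that the paper never explicitly establishes (it computes the one-sided limit of a difference quotient). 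The trade-off is that you must verify the IFT hypotheses, in particular $C^1$ regularity of the modular $G(\varepsilon,\lambda)$ jointly in both variables, whereas the paper avoids differentiating under the integral in $\varepsilon$ and only needs to justify a single passage to the limit in the integral containing $A(x,\varepsilon)$ --- a step it, too, leaves without an explicit dominating function. So the technical crux you flag (the $\varepsilon$-uniform $L^1$ majorant for the singular kernel) is present in both proofs; you are candid that you have not fully discharged it, and the paper is no more explicit on this point. Modulo that shared technical gap, your argument is sound and reaches the same formulas after the same algebraic cancellation of the $1/p(x,y)$ weight.
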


\begin{proof}
The proof for the derivation of $k$ has been established in \cite[Lemma A.1]{Franzina-Lindqvist}, and now let's address the derivation for $K.$
  Observe that for all $a,b\in \R$ and all $p>1$ we have that
  $$
  |b|^p-|a|^p=\int_{0}^{1}\frac{d}{dt}|a+t(b-a)|^pdt=p(b-a)\int_{0}^{1}|a+t(b-a)|^{p-2}[a+t(b-a)]dt.
  $$
  By the definition of the Luxemburg norm, we know that
  $$
  \int_{\R^N}\int_{\R^N}\left|\frac{\nabla_s v_\varepsilon}{K(v_\varepsilon)}\right|^{p(x,y)}\frac{dxdy}{p(x,y)|x-y|^N}=1,\quad \int_{\R^N}\int_{\R^N}\left|\frac{\nabla_s u}{K(u)}\right|^{p(x,y)}\frac{dxdy}{p(x,y)|x-y|^N}=1.
  $$
  Subtracting the preceding two expressions and employing the variables $b$ for $\frac{\nabla_s v_\varepsilon}{K(v_\varepsilon)},$ $a$ for $\frac{\nabla_s u}{K(u)}$ and $p=p(x,y)$  results in:
  \begin{equation}\label{Ax}
    0=\int_{\R^N}\int_{\R^N}\left[\frac{\nabla_s v_\varepsilon}{K(v_\varepsilon)}-\frac{\nabla_s u}{K(u)}\right]A(x,\varepsilon)\frac{dxdy}{|x-y|^N},
  \end{equation}
  where
  $$
  A(x,\varepsilon)=\int_{0}^{1}\left|\frac{\nabla_s u}{K(u)}+t\left(\frac{\nabla_s v_\varepsilon}{K(v_\varepsilon)}-\frac{\nabla_s u}{K(u)}\right)\right|^{p(x,y)-2}\left(\frac{\nabla_s u}{K(u)}+t\left(\frac{\nabla_s v_\varepsilon}{K(v_\varepsilon)}-\frac{\nabla_s u}{K(u)}\right)\right)dt.
  $$
  By employing the definition of $v_\varepsilon$, we express \eqref{Ax} in the following manner:
  $$
  \int_{\R^N}\int_{\R^N}\left[\frac{\nabla_s u}{K(v_\varepsilon)}-\frac{\nabla_s u}{K(u)}\right]A(x,\varepsilon)\frac{dxdy}{|x-y|^N}=
  -\varepsilon\int_{\R^N}\int_{\R^N}\frac{\nabla_s \varphi}{K(v_\varepsilon)}A(x,\varepsilon)\frac{dxdy}{|x-y|^N},
  $$
  from where
  \begin{equation}\label{Kepsilon}
  \frac{K(v_\varepsilon)-K(u)}{\varepsilon}\int_{\R^N}\int_{\R^N}\frac{\nabla_s u}{K(u)}A(x,\varepsilon)\frac{dxdy}{|x-y|^N}=\int_{\R^N}\int_{\R^N}\nabla_s \varphi A(x,\varepsilon)\frac{dxdy}{|x-y|^N}.
  \end{equation}
  Since $K(v_\varepsilon)$ depends continuously on $\varepsilon$, then
  $$
  \frac{\nabla_s v_\varepsilon}{K(v_\varepsilon)}-\frac{\nabla_s u}{K(u)}\to 0\ \text{a.e. as}\ \varepsilon\to 0^+,
  $$
  and we deduce that
  $$
  A(x,\varepsilon)\to \left|\frac{\nabla_s u}{K(u)}\right|^{p(x,y)-2}\frac{\nabla_s u}{K(u)}\ \text{a.e. as}\ \varepsilon\to 0^+.
  $$
  Letting $\varepsilon\to 0^+$ in \eqref{Kepsilon}, we get
  $$
  \left( \frac{d}{d\varepsilon} K(v_\varepsilon)\right)_{\varepsilon=0}\int_{\R^N}\int_{\R^N}\left|\frac{\nabla_s u}{K(u)}\right|^{p(x,y)}\frac{dxdy}{|x-y|^N}=\int_{\R^N}\int_{\R^N} \left|\frac{\nabla_s u}{K(u)}\right|^{p(x,y)-2}\frac{\nabla_s u}{K(u)}\nabla_s \varphi \frac{dxdy}{|x-y|^N}.
  $$
  That is
  $$
  \left( \frac{d}{d\varepsilon} K(v_\varepsilon)\right)_{\varepsilon=0}=\frac{K\int_{\R^N}\int_{\R^N} \left|\frac{\nabla_s u}{K(u)}\right|^{p(x,y)-2}\frac{\nabla_s u}{K(u)}\frac{\nabla_s \varphi}{K(u)} \frac{dxdy}{|x-y|^N}}{\int_{\R^N}\int_{\R^N}\left|\frac{\nabla_s u}{K(u)}\right|^{p(x,y)}\frac{dxdy}{|x-y|^N}}
  $$
\end{proof}

Now, let's advance toward the primary objective of this part, which is to deduce the Euler-Lagrange equation that characterizes the minimum of $\Lambda_1$. By combining the Proposition \ref{differentiation} and the identity labeled as \eqref{necessary condition} , it becomes clear that:
\begin{equation}\label{Lambda1}
\int_{\R^N}\int_{\R^N} \left|\frac{\nabla_s u}{K(u)}\right|^{p(x,y)-2}\frac{\nabla_s u}{K(u)}\nabla_s \varphi \frac{dxdy}{|x-y|^N}=
\Lambda_1 S(u)\int_{\Omega}\left|\frac{u}{k(u)}\right|^{\bar{p}(x)-2}\frac{u}{k(u)}\varphi dx,\quad\forall\varphi\in C_0^\infty(\Omega),
\end{equation}
where
\begin{equation}\label{S(u)}
S(u):=\frac{\int_{\R^N}\int_{\R^N}\left|\frac{\nabla_s u}{K(u)}\right|^{p(x,y)}\frac{dxdy}{|x-y|^N}}{\int_{\Omega}\left|\frac{u}{k}\right|^{\bar{p}(x)}dx}.
\end{equation}
The preceding characterization presents the weak form of problem \eqref{eq1}, and subsequently, we introduce the following definition.
\begin{defn}\label{def-eigenvalue}
  We say that $\Lambda$ is an eigenvalue of \eqref{eq1} with eigenfunction $u\in X$ if $u$ is a nontrivial weak solution to \eqref{eq1}, that is,
  \begin{equation}\label{Lambda}
\int_{\R^N}\int_{\R^N} \left|\frac{\nabla_s u}{K(u)}\right|^{p(x,y)-2}\frac{\nabla_s u}{K(u)}\nabla_s \varphi \frac{dxdy}{|x-y|^N}=
\Lambda S(u)\int_{\Omega}\left|\frac{u}{k(u)}\right|^{\bar{p}(x)-2}\frac{u}{k(u)}\varphi dx,\quad\forall\varphi\in X.
\end{equation}
\end{defn}

\begin{thm}\label{first-eigenvalue}
$\Lambda_1$ is the first eigenvalue and the minimum $u$ of $\Lambda_1$ which is nontrivial is the corresponding first eigenfunction.
\end{thm}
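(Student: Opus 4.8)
The plan is to show two things: first, that the minimizer $u$ produced in Proposition~\ref{minimum} is indeed an eigenfunction in the sense of Definition~\ref{def-eigenvalue} with eigenvalue $\Lambda_1$; and second, that $\Lambda_1$ is the smallest possible eigenvalue, i.e., no $\Lambda<\Lambda_1$ can arise in \eqref{Lambda}. The first assertion is essentially already done: combining Proposition~\ref{differentiation} with the necessary condition \eqref{necessary condition} gives exactly \eqref{Lambda1}, which is \eqref{Lambda} with $\Lambda=\Lambda_1$, so the nontrivial minimizer $u$ is an eigenfunction for $\Lambda_1$. What remains is to justify that the formal computation of $\frac{d}{d\varepsilon}$ at $\varepsilon=0$ is legitimate — that $\varepsilon\mapsto K(v_\varepsilon)$ and $\varepsilon\mapsto k(v_\varepsilon)$ are differentiable at $0$ — and that the Euler-Lagrange identity, a priori valid for $\varphi\in C_0^\infty(\Omega)$, extends to all $\varphi\in X$ by density (using Theorem~\ref{compact-embedding} and the boundedness of the relevant modular-type integrals via the Lemma relating $\|\cdot\|_X$ and $\rho_{p(\cdot,\cdot)}$).

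For the minimality of $\Lambda_1$ among eigenvalues, suppose $\Lambda$ is an eigenvalue with eigenfunction $w\in X\setminus\{0\}$, so \eqref{Lambda} holds for all $\varphi\in X$. The natural move is to test with $\varphi=w$ itself. On the left-hand side one gets
$$
\int_{\R^N}\int_{\R^N}\left|\frac{\nabla_s w}{K(w)}\right|^{p(x,y)}\frac{dxdy}{|x-y|^N},
$$
while on the right one gets $\Lambda\, S(w)\int_\Omega\left|\frac{w}{k(w)}\right|^{\bar p(x)}dx$, and by the definition \eqref{S(u)} of $S(w)$ the factor $\int_\Omega|w/k(w)|^{\bar p(x)}dx$ cancels against the denominator of $S(w)$, leaving
$$
\int_{\R^N}\int_{\R^N}\left|\frac{\nabla_s w}{K(w)}\right|^{p(x,y)}\frac{dxdy}{|x-y|^N}=\Lambda\int_{\R^N}\int_{\R^N}\left|\frac{\nabla_s w}{K(w)}\right|^{p(x,y)}\frac{dxdy}{|x-y|^N}.
$$
This forces $\Lambda=1$, which looks wrong — so the actual argument must instead exploit that the Luxemburg-norm normalization built into $K$ is what carries the geometric information. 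The correct route: since $K(w)=[w]_{s,p(\cdot,\cdot)}$ is the Luxemburg norm, one has $\rho_{p(\cdot,\cdot)}(\nabla_s w/K(w))$-type identity $\int\int|\nabla_s w/K(w)|^{p(x,y)}\frac{dxdy}{p(x,y)|x-y|^N}=1$, and similarly for $k$. Feeding $\varphi=w$ into \eqref{Lambda} and using these normalizations, one reads off that $\Lambda = H_s(w)^{\text{(something)}}$ — more precisely that $\Lambda$ equals a ratio that is bounded below by $\inf_{v} H_s(v)=\Lambda_1$ by the very definition of $\Lambda_1$. Hence $\Lambda\ge\Lambda_1$ for every eigenvalue $\Lambda$, and since $\Lambda_1$ is attained, it is the first eigenvalue.

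The main obstacle is the bookkeeping around the non-homogeneous normalizations: the operator $\mathcal L$ and the right-hand side of \eqref{eq1} are rigged so that testing against $w$ does \emph{not} trivially collapse, and one must carefully track how $K(w)$, $k(w)$, $S(w)$, and the $p(x,y)$-weighted normalizations interact so as to recover precisely $H_s(w)$ (or a monotone function of it) as the value of $\Lambda$. A secondary technical point is the rigorous differentiation of $\varepsilon\mapsto K(v_\varepsilon)$: because $K$ is defined implicitly through the Luxemburg norm, one needs the implicit-function-type argument sketched in Proposition~\ref{differentiation} (continuity of $A(x,\varepsilon)$, dominated convergence to pass $\varepsilon\to0^+$ in \eqref{Kepsilon}), together with the fact that $K(u)>0$ for $u\ne0$, to make the one-sided derivative exist and equal the stated expression; the two-sided derivative then follows by the same computation with $\varepsilon\to0^-$, and evenness of the integrands in $\nabla_s\varphi$ is not needed since $\varphi$ is arbitrary. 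Once these points are settled, the theorem follows by assembling Proposition~\ref{minimum}, Proposition~\ref{differentiation}, and the test-function computation above.
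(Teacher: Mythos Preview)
Your overall strategy matches the paper's exactly: derive \eqref{Lambda1} from Proposition~\ref{differentiation} and \eqref{necessary condition} to see that $\Lambda_1$ is an eigenvalue, then test \eqref{Lambda} with $\varphi=w$ to show every eigenvalue satisfies $\Lambda\ge\Lambda_1$. The gap is in the second step, where your computation goes off the rails.

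When you plug $\varphi=w$ into the left-hand side of \eqref{Lambda}, the last factor is $\nabla_s\varphi=\nabla_s w$, \emph{not} $\nabla_s w/K(w)$. Hence
\[
\int_{\R^N}\int_{\R^N}\left|\frac{\nabla_s w}{K(w)}\right|^{p(x,y)-2}\frac{\nabla_s w}{K(w)}\,\nabla_s w\,\frac{dxdy}{|x-y|^N}
= K(w)\int_{\R^N}\int_{\R^N}\left|\frac{\nabla_s w}{K(w)}\right|^{p(x,y)}\frac{dxdy}{|x-y|^N},
\]
with the extra factor $K(w)$ that you dropped. Symmetrically, on the right-hand side $\varphi=w$ (not $w/k(w)$), so
\[
\Lambda\,S(w)\int_\Omega\left|\frac{w}{k(w)}\right|^{\bar p(x)-2}\frac{w}{k(w)}\,w\,dx
= \Lambda\,S(w)\,k(w)\int_\Omega\left|\frac{w}{k(w)}\right|^{\bar p(x)}dx,
\]
and after the cancellation through $S(w)$ this equals $\Lambda\,k(w)\int\!\!\int|\nabla_s w/K(w)|^{p(x,y)}\frac{dxdy}{|x-y|^N}$. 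Equating the two sides and dividing by the common (nonzero) integral gives $K(w)=\Lambda\,k(w)$, i.e.\ $\Lambda=H_s(w)\ge\Lambda_1$. This is precisely the paper's argument; your paragraph about Luxemburg normalizations and ``$\Lambda=H_s(w)^{\text{(something)}}$'' is not needed and does not repair the error---the fix is purely the two missing scalar factors $K(w)$ and $k(w)$.
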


\begin{proof}
In light of identity \eqref{Lambda1}, it becomes clear that $\Lambda_1$ serves as an eigenvalue for \eqref{eq1}, with $u$ representing the corresponding eigenfunction. Now, let's verify that $\Lambda_1$ holds the position of the first eigenvalue. Let $\Lambda$ be an eigenvalue of \eqref{eq1} and $v$ its corresponding eigenfunction. By setting $\varphi=v = u$ in equation \eqref{Lambda} and considering the definition of $S(v)$ as given in \eqref{S(u)}, we can derive:
\begin{align*}
   K(v)\int_{\R^N}\int_{\R^N}\left|\frac{\nabla_s v}{K(v)}\right|^{p(x,y)}\frac{dxdy}{|x-y|^N}&=\Lambda S(v)k(v) \int_{\Omega}\left|\frac{v}{k}\right|^{\bar{p}(x)}dx\\
   &=\Lambda k(v)\int_{\R^N}\int_{\R^N}\left|\frac{\nabla_s v}{K(v)}\right|^{p(x,y)}\frac{dxdy}{|x-y|^N}.
\end{align*}
Then
$$
\Lambda=\frac{K(v)}{k(v)}\geq\Lambda_1.
$$
Hence $\lambda_1$ is the first eigenvalue.

\end{proof}

{\bf Sequence of eigenvalues:}\\

Put
$$
\mathcal{M}:=\left\{u\in X:\ k(u)=1\right\}.
$$
The first eigenvalue can be defined as
$$
\Lambda_1=\inf_{u\in \mathcal{M}}K(u).
$$
$\mathcal{M}$ is a submanifold of class $C^1$ in $X.$

\begin{lema}
Let $u\in X\backslash\{0\}.$ For all $v\in X$ the following inequalities hold
\begin{equation}\label{Kk}
  \left|\langle K^{'}(u),v\rangle\right|\leq K(v)\quad\text{and}\quad\left|\langle k^{'}(u),v\rangle\right|\leq k(v).
\end{equation}
\end{lema}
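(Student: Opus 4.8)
The plan is to bypass any new computation and use the single structural fact that $K(u)=[u]_{s,p(\cdot,\cdot),\R^N}$ and $k(u)=\|u\|_{\bar{p},\Omega}$ are \emph{seminorms}: both are positively $1$-homogeneous, even ($K(-v)=K(v)$ and $k(-v)=k(v)$ are immediate from the definitions, which only see $|u(x)-u(y)|$ and $|u|$), and subadditive — this last property is the standard fact that a Luxemburg functional built from a convex modular satisfies the triangle inequality, applied here to the convex modulars $\rho_{p(\cdot,\cdot)}$ and $\rho_{\bar{p}}$ (convexity coming from $t\mapsto|t|^{p(x,y)}$, resp. $t\mapsto|t|^{\bar{p}(x)}$). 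Moreover, away from the origin both functionals are G\^ateaux differentiable: this is exactly what is exploited in Proposition \ref{differentiation}, so for $u\in X\setminus\{0\}$ and $v\in X$ we may write $\langle K'(u),v\rangle=\big(\tfrac{d}{d\varepsilon}K(u+\varepsilon v)\big)_{\varepsilon=0}$ and likewise for $k$.

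\textbf{Main step.} Let $\Phi$ denote either $K$ or $k$, fix $u\in X\setminus\{0\}$, $v\in X$. By subadditivity and homogeneity, for every $t>0$,
\[
\frac{\Phi(u+tv)-\Phi(u)}{t}\le \frac{\Phi(u)+t\,\Phi(v)-\Phi(u)}{t}=\Phi(v).
\]
Letting $t\to 0^{+}$ gives $\langle\Phi'(u),v\rangle\le\Phi(v)$. Replacing $v$ by $-v$ and using $\Phi(-v)=\Phi(v)$ yields $-\langle\Phi'(u),v\rangle=\langle\Phi'(u),-v\rangle\le\Phi(v)$, hence $|\langle\Phi'(u),v\rangle|\le\Phi(v)$. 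Taking $\Phi=K$ and $\Phi=k$ establishes both inequalities in \eqref{Kk}.

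\textbf{Alternative (direct) route.} One can instead read the estimates off the explicit formulas in Proposition \ref{differentiation}. Applying the triangle inequality for integrals together with Young's inequality $ab\le\tfrac{1}{p(x,y)}a^{p(x,y)}+\tfrac{1}{p'(x,y)}b^{p'(x,y)}$ pointwise to $a=|\nabla_s v/K(v)|$ and $b=|\nabla_s u/K(u)|^{p(x,y)-1}$ (so that $b^{p'(x,y)}=|\nabla_s u/K(u)|^{p(x,y)}$), we get
\[
\Big|\int_{\R^N}\!\!\int_{\R^N}\Big|\tfrac{\nabla_s u}{K(u)}\Big|^{p(x,y)-2}\tfrac{\nabla_s u}{K(u)}\,\tfrac{\nabla_s v}{K(v)}\,\tfrac{dxdy}{|x-y|^N}\Big|
\le \rho_{p(\cdot,\cdot)}\!\Big(\tfrac{v}{K(v)}\Big)+\int_{\R^N}\!\!\int_{\R^N}\tfrac{1}{p'(x,y)}\Big|\tfrac{\nabla_s u}{K(u)}\Big|^{p(x,y)}\tfrac{dxdy}{|x-y|^N}.
\]
Since $\rho_{p(\cdot,\cdot)}(v/K(v))=1$ and $\tfrac{1}{p'}=1-\tfrac{1}{p}$, the right-hand side collapses to $\int_{\R^N}\int_{\R^N}|\nabla_s u/K(u)|^{p(x,y)}\,\tfrac{dxdy}{|x-y|^N}$, which is precisely the denominator appearing in Proposition \ref{differentiation}; dividing through gives $|\langle K'(u),v\rangle|\le K(v)$, and the computation for $k$ (with $\bar{p}$ in place of $p(\cdot,\cdot)$ and the single integral over $\Omega$) is identical.

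\textbf{Obstacle.} There is no real obstacle: the only points needing care are (i) the subadditivity of the fractional variable-exponent Luxemburg seminorm $[\cdot]_{s,p(\cdot,\cdot),\R^N}$, which follows from convexity of the modular $\rho_{p(\cdot,\cdot)}$ by the usual Luxemburg-norm argument, and (ii), in the direct route, the bookkeeping between $\rho_{p(\cdot,\cdot)}$ (which carries the $1/p(x,y)$ weight) and the un-weighted double integral that serves as the denominator in Proposition \ref{differentiation}.
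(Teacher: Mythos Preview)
Your proposal is correct. The paper does not actually prove the lemma: it simply cites \cite[Proposition~3.6]{Alves et al} for the inequality involving $k$ and asserts that ``analogous techniques'' yield the one for $K$. Your main step --- the observation that any G\^ateaux-differentiable seminorm $\Phi$ automatically satisfies $|\langle\Phi'(u),v\rangle|\le\Phi(v)$ via the one-line subadditivity estimate on the difference quotient --- is exactly the kind of argument that underlies the cited result, and it is fully self-contained here. Your alternative route through Proposition~\ref{differentiation} and pointwise Young's inequality is also valid and makes the constants and the cancellation with the denominator explicit; it is a pleasant sanity check but not needed once the structural argument is in place. One small remark: Proposition~\ref{differentiation} is stated for $\varphi\in C_0^\infty(\Omega)$, whereas the lemma concerns all $v\in X$; your seminorm argument sidesteps this entirely since it only uses the existence of the G\^ateaux derivative, not its explicit form.
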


\begin{proof}
 The second inequality of \eqref{Kk} has been established in \cite[Proposition 3.6]{Alves et al}. Employing analogous techniques, we extend this proof to establish the first one.
\end{proof}

\begin{prop}\label{PS}
  The functional $\tilde{K}|_{\mathcal{M}}$ satisfies the $(PS)$ condition, i.e., every sequence $(u_k)\subset \mathcal{M}$ such that $\tilde{K}(u_k)\to c$ for some $c\in\R$ and $\tilde{K}^{'}(u_k)\to 0$ in $X^{'}$ admits a convergent subsequence.
\end{prop}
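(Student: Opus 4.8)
The strategy is the classical constrained compactness argument: extract a weakly convergent subsequence from a $(PS)$ sequence, identify its limit as a point of $\mathcal{M}$, and then upgrade weak convergence to strong convergence using the $(S_+)$ structure hidden in $K'$. Here $\tilde K$ denotes the restriction of $K=[\,\cdot\,]_{s,p(\cdot,\cdot)}$ to the $C^1$-submanifold $\mathcal{M}=\{u\in X:\ k(u)=1\}$, and $\tilde K'(u)$ is the differential of this restriction, so that $\tilde K'(u)\to 0$ in $X'$ means $\inf_{\mu\in\R}\|K'(u)-\mu\,k'(u)\|_{X'}\to 0$. For \emph{Step 1 (compactness)}, let $(u_k)\subset\mathcal{M}$ satisfy $\tilde K(u_k)=K(u_k)\to c$ and $\tilde K'(u_k)\to 0$. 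Since $\|u_k\|_X=K(u_k)$, the sequence is bounded in $X$, and $c\ge\Lambda_1>0$ because the Sobolev inequality \eqref{Sobolev-inequality} gives $K(u)\ge k(u)/C=1/C$ on $\mathcal{M}$. By reflexivity of $X$ and the compact embedding $X\hookrightarrow\hookrightarrow L^{\bar p(\cdot)}(\Omega)$ (Theorem~\ref{compact-embedding}, used as in Proposition~\ref{minimum}), a subsequence satisfies $u_k\rightharpoonup u$ in $X$ and $u_k\to u$ in $L^{\bar p(\cdot)}(\Omega)$. Hence $k(u)=\lim_k k(u_k)=1$, so $u\in\mathcal{M}$ and $u\neq 0$, while lower semicontinuity of $\|\cdot\|_X$ gives $K(u)\le c$.

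\emph{Step 2 (passing the derivative to the limit).} Since $k$ is positively $1$-homogeneous, $\langle k'(u_k),u_k\rangle=k(u_k)=1\neq 0$, so $\mathcal{M}$ is a $C^1$-manifold near $u_k$ and there exist scalars $\mu_k$ with $K'(u_k)-\mu_k k'(u_k)\to 0$ in $X'$. Testing against $u_k$ and using the homogeneity relations $\langle K'(u_k),u_k\rangle=K(u_k)$ and $\langle k'(u_k),u_k\rangle=1$ gives $\mu_k\to c$. Testing instead against the bounded sequence $u_k-u$, and using $|\langle k'(u_k),u_k-u\rangle|\le k(u_k-u)=\|u_k-u\|_{\bar p,\Omega}\to 0$ (from \eqref{Kk} and the strong $L^{\bar p(\cdot)}$ convergence) together with the boundedness of $\{\mu_k\}$, we obtain $\langle K'(u_k),u_k-u\rangle\to 0$.

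\emph{Step 3 (weak to strong).} By Proposition~\ref{differentiation}, $K'(u_k)=D(u_k)^{-1}\mathcal{A}(w_k)$, where $w_k:=u_k/K(u_k)$ lies on the unit sphere of $X$, $\mathcal{A}$ is the fractional $p(\cdot,\cdot)$-Laplacian determined by $\langle\mathcal{A}(\phi),\psi\rangle=\int_{\R^N}\int_{\R^N}\frac{|\phi(x)-\phi(y)|^{p(x,y)-2}(\phi(x)-\phi(y))(\psi(x)-\psi(y))}{|x-y|^{N+sp(x,y)}}\,dxdy$, and $D(u_k)=\int_{\R^N}\int_{\R^N}\frac{|u_k(x)-u_k(y)|^{p(x,y)}}{K(u_k)^{p(x,y)}|x-y|^{N+sp(x,y)}}\,dxdy\in[p^-,p^+]$ by the norm--modular relation. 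From $K(u_k)\to c>0$ we get $w_k\rightharpoonup w:=u/c$ in $X$; then the splitting $u_k-u=K(u_k)(w_k-w)+(K(u_k)-c)w$, combined with $\langle K'(u_k),u_k-u\rangle\to 0$, the two-sided bound $D(u_k)\in[p^-,p^+]$ and the boundedness of $\{\mathcal{A}(w_k)\}$ in $X'$, forces $\langle\mathcal{A}(w_k),w_k-w\rangle\to 0$. Since $\mathcal{A}$ is of type $(S_+)$ on $X$ (see \cite{Bahrouni-Ho}), $w_k\to w$ strongly in $X$, hence $u_k=K(u_k)w_k\to cw=u$ strongly in $X$, which is precisely the $(PS)$ condition. (If one additionally knows that $X$ is uniformly convex, Proposition~\ref{differentiation} can be bypassed: from $\langle K'(u_k),u_k-u\rangle\to 0$ and $\langle K'(u_k),u_k\rangle\to c$ one gets $\langle K'(u_k),u\rangle\to c$, while $|\langle K'(u_k),u\rangle|\le K(u)$ by \eqref{Kk}, so $K(u)=c=\lim_k\|u_k\|_X$, and $u_k\rightharpoonup u$ with $\|u_k\|_X\to\|u\|_X$ implies $u_k\to u$.)

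The routine bookkeeping in Steps 1 and 2 is not the difficulty; the real content is Step 3. The only soft information at hand — boundedness in $X$ plus strong convergence in $L^{\bar p(\cdot)}(\Omega)$ — does not yield strong convergence in $X$, so one must genuinely exploit the monotone, $(S_+)$ nature of the fractional $p(\cdot,\cdot)$-Laplacian. The technical nuisance is that $K$ is a Luxemburg-type norm whose derivative is evaluated at the renormalized function $u/K(u)$ rather than at $u$; consequently the scaling $w_k=u_k/K(u_k)$ and the uniform two-sided control $D(u_k)\in[p^-,p^+]$ must be carried along in order to bring the limit into a form to which the $(S_+)$ property can be applied.
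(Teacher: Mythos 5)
Your proposal is correct, and Steps 1 and 2 coincide with the paper's proof: you obtain the multiplier estimate $\mu_k\to c$ by pairing $K'(u_k)-\mu_k k'(u_k)$ against $u_k$ and invoking Euler's identities $\langle K'(u_k),u_k\rangle=K(u_k)$, $\langle k'(u_k),u_k\rangle=1$; you extract $u_k\rightharpoonup u$ in $X$ and $u_k\to u$ in $L^{\bar p(\cdot)}(\Omega)$ from boundedness and the compact embedding; and you deduce $\langle K'(u_k),u_k-u\rangle\to 0$ from \eqref{Kk}. The divergence is in the last step, from weak to strong convergence. The paper uses a much lighter argument that never opens up $K'$: from \eqref{Kk} one has $\langle K'(u_k),u_k-u\rangle=K(u_k)-\langle K'(u_k),u\rangle\ge K(u_k)-K(u)$, so $\limsup_k K(u_k)\le K(u)$, while weak lower semicontinuity gives $K(u)\le\liminf_k K(u_k)$; hence $\|u_k\|_X=K(u_k)\to K(u)=\|u\|_X$ and the (tacit) Radon--Riesz property of the uniformly convex $X$ upgrades $u_k\rightharpoonup u$ to strong convergence. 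Your main Step 3 instead unfolds $K'$ via Proposition~\ref{differentiation}, passes to $w_k=u_k/K(u_k)$ on the unit sphere, controls $D(u_k)\in[p^-,p^+]$ by the Luxemburg modular identity, and calls on the $(S_+)$ property of the fractional $p(\cdot,\cdot)$-Laplacian $\mathcal{A}$ from \cite{Bahrouni-Ho}. That chain is correct, but it imports more machinery than the statement requires: the paper stays entirely at the level of the abstract norm inequality \eqref{Kk} and never touches the integral representation of $K'$. Notably, your parenthetical remark at the end of Step 3 is precisely the paper's route (modulo the one missing line that weak lower semicontinuity gives $K(u)\le c$, needed to turn $c\le K(u)$ into $K(u)=c$), so you could promote it to the main argument and dispense with the $(S_+)$ detour entirely.
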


\begin{proof}
  Let  $c\in \R$ and $(c_k)\subset\R$ a sequence such that
  \begin{equation}\label{ck}
    K(u_k)\to c\quad\text{and}\quad K^{'}(u_k)-c_k k^{'}(u_k)\to 0\ \text{in}\ X^{'}.
  \end{equation}
  On one hand, we can see that
  $$
  \langle K^{'}(u_k)-c_k k^{'}(u_k),u_k\rangle=\langle K^{'}(u_k),u_k\rangle -c_k\langle k^{'}(u_k),u_k\rangle=K(u_k)-c_k.
  $$
  On the other hand
  $$
  \left|\langle K^{'}(u_k)-c_k k^{'}(u_k),u_k\rangle\right|\leq \| K^{'}(u_k)-c_k k^{'}(u_k)\|_{X^*}K(u_k) \to 0.
  $$
  Then $c_k\to c.$ Since $(u_k)$ is bounded in $X$, up to subsequence, $u_k\rightharpoonup u$ in $X$ and $u_k\to u$ in $L^{\bar{p}(x)}(\Omega).$ By \eqref{Kk}, we get
  $$
  \left|\langle k^{'}(u_k),u_k-u\rangle\right|\leq k(u_k -u)=\|u_k-u\|_{\bar{p},\Omega}\to 0.
  $$
  from this and \eqref{ck} we deduce that
  \begin{equation*}
    \langle K^{'}(u_k),u_k-u\rangle\to 0.
  \end{equation*}
  Since
  $$
   \langle K^{'}(u_k),u_k-u\rangle=K(u_k)- \langle K^{'}(u_k),u\rangle\geq K(u_k)-K(u),
  $$
  then, by the weak lower semicontinuity of the norm, we get
  $$
  \limsup_{k\to\infty} K(u_k)\leq K(u)\leq \liminf_{k\to\infty}K(u_k).
  $$
\end{proof}

As $\mathcal{M}$ is a closed symmetric submanifold with $C^1$ regularity in $X$, the previously mentioned result empowers us to derive a sequence of eigenvalues for equation \eqref{eq1} by employing a minimax procedure.

\begin{defn}
  For $n\in\mathbb{N},$ we define the $n-$th variational eigenvalue $\Lambda_n$ of \eqref{eq1} as
  \begin{equation}\label{sequence}
    \Lambda_n:=\inf_{A\in \mathcal{C}_m}\sup_{u\in A} \tilde{K}(u),
  \end{equation}
  where  $\mathcal{C}_m:=\left\{C \subset \mathcal{M}: C\right.$ is compact, $\left.C=-C, \gamma(C) \geq m\right\}$ and $\gamma$ is the Krasnoselskii genus.
\end{defn}

\begin{prop}
  The sequence $(\Lambda_n)$ is nondecreasing, divergent, and for every $n\geq 1$ there exists $u_n\in \mathcal{M}$ solving \eqref{eq1}, with $\Lambda=\Lambda_n.$
\end{prop}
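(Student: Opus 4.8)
The plan is to obtain the statement from the standard Ljusternik--Schnirelmann minimax theory applied to the even functional $\tilde K = K|_{\mathcal{M}}$ on the closed symmetric $C^1$-submanifold $\mathcal{M}$, using Proposition \ref{PS} for the compactness condition and the compact embedding of Theorem \ref{compact-embedding} for the divergence. The monotonicity is immediate: if $\gamma(C)\ge n+1$ then certainly $\gamma(C)\ge n$, so $\mathcal{C}_{n+1}\subset\mathcal{C}_n$, and passing to the infimum of $\sup_{u\in A}\tilde K(u)$ over the smaller family can only raise its value, whence $\Lambda_n\le\Lambda_{n+1}$.

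For the existence of eigenfunctions I would argue as follows. Since $\tilde K$ is even, of class $C^1$, bounded below by $\Lambda_1>0$, satisfies the $(PS)$ condition by Proposition \ref{PS}, and $\mathcal{M}$ is a complete (being closed in $X$) symmetric $C^1$-Finsler manifold, the classical genus-based minimax principle together with the odd deformation lemma on $\mathcal{M}$ ensures that each level $\Lambda_n$ from \eqref{sequence} is a critical value of $\tilde K|_{\mathcal{M}}$, attained at some $u_n\in\mathcal{M}$. By the Lagrange multiplier rule there is $\mu_n\in\R$ with $K'(u_n)=\mu_n k'(u_n)$ in $X'$; testing this identity against $u_n$ and using that $K$ and $k$ are positively homogeneous of degree one — so $\langle K'(u_n),u_n\rangle = K(u_n)$ and $\langle k'(u_n),u_n\rangle = k(u_n)=1$ — gives $\mu_n = K(u_n)=\Lambda_n$. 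Finally, rewriting $K'(u_n)=\Lambda_n k'(u_n)$ by means of the explicit expressions in Proposition \ref{differentiation} reproduces precisely the weak formulation \eqref{Lambda} with $\Lambda=\Lambda_n$, so $u_n\in\mathcal{M}$ solves \eqref{eq1}.

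For the divergence $\Lambda_n\to\infty$ I would argue by contradiction, assuming $\Lambda_n\le c$ for all $n$. For each $n$ choose $A_n\in\mathcal{C}_n$ with $\sup_{u\in A_n}\tilde K(u)\le c+1$; then $A_n$ is contained in the sublevel set $\mathcal{N}:=\{u\in\mathcal{M}: K(u)\le c+1\}$ and $\gamma(A_n)\ge n$. Now $\mathcal{N}$ is bounded in $X$, hence by the compact embedding $X\hookrightarrow\hookrightarrow L^{\bar p(\cdot)}(\Omega)$ of Theorem \ref{compact-embedding} its closure in $L^{\bar p(\cdot)}(\Omega)$ is compact, and since every $u\in\mathcal{M}$ satisfies $\|u\|_{\bar p,\Omega}=1$ this closure does not contain the origin; a compact symmetric subset of a normed space avoiding $0$ has finite genus, say $m_0$. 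Since the inclusion of $A_n$ into $L^{\bar p(\cdot)}(\Omega)$ is continuous and odd, $n\le\gamma(A_n)\le m_0$ for every $n$, which is absurd. The main obstacle is exactly this last step: the sublevel sets of $\tilde K$ are not precompact in $X$, so one is forced to descend to the weaker topology of $L^{\bar p(\cdot)}(\Omega)$, where Theorem \ref{compact-embedding} supplies compactness and hence finiteness of the genus, and then transport the bound back using the monotonicity of the genus under continuous odd maps. The remaining ingredients — the $(PS)$ condition and the $C^1$-manifold structure of $\mathcal{M}$ — are already in place.
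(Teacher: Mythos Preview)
Your proposal is correct and follows the same overall strategy as the paper: apply Ljusternik--Schnirelmann minimax theory to the even $C^1$ functional $\tilde K$ on the symmetric $C^1$-manifold $\mathcal{M}$, using Proposition~\ref{PS} for the Palais--Smale condition, and deduce monotonicity from the nesting $\mathcal{C}_{n+1}\subset\mathcal{C}_n$.

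The only difference is in presentation. The paper simply invokes \cite[Corollary~4.1]{Szulkin}, which packages the existence of critical points at the minimax levels together with the divergence $\Lambda_n\to\infty$, and does not spell out the Lagrange-multiplier identification. You instead unpack these ingredients: the Lagrange multiplier computation $\mu_n=\langle K'(u_n),u_n\rangle=K(u_n)=\Lambda_n$ via $1$-homogeneity, the translation into the weak form \eqref{Lambda} through Proposition~\ref{differentiation}, and a direct genus-based proof of divergence using the compact embedding of Theorem~\ref{compact-embedding}. Your argument is thus more self-contained, while the paper's is shorter; substantively they coincide.
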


\begin{proof}
  Under Proposition \ref{PS}, we can employ \cite[Corollary 4.1]{Szulkin} to establish that the values specified in \eqref{sequence} indeed constitute eigenvalues of \eqref{eq1} as per Definition \ref{def-eigenvalue}. Additionally, given that $C_{m+1}\subset C_m$ holds for all $m,$ the sequence $(\Lambda_n)$ is characterized by a divergent, nondecreasing pattern.
\end{proof}


\end{document}